\DeclareMathOperator{\R}{\mathbb{R}}
\DeclareMathOperator{\N}{\mathbb{N}}
\DeclareMathOperator{\E}{\mathbb{E}}
\DeclareMathOperator{\C}{\mathcal{C}}
\DeclareMathOperator{\NN}{\mathcal{N}}
\DeclareMathOperator{\PP}{\mathcal{P}}
\DeclareMathOperator{\QQ}{\mathcal{Q}}
\DeclareMathOperator{\RR}{\mathcal{R}}
\DeclareMathOperator{\PPH}{\widehat{\mathcal{P}}}
\DeclareMathOperator{\QQH}{\widehat{\mathcal{Q}}}
\DeclareMathOperator{\IPS}{\mathcal{I}^{+}(S)}
\DeclareMathOperator{\IMS}{\mathcal{I}^{-}(S)}
\DeclareMathOperator{\JPS}{\mathcal{J}^{+}(S)}
\DeclareMathOperator{\JMS}{\mathcal{J}^{-}(S)}
\DeclareMathOperator{\KPS}{\mathcal{K}^{+}(S)}
\DeclareMathOperator{\KMS}{\mathcal{K}^{-}(S)}
\DeclareMathOperator{\IPK}{\mathcal{I}^{+}(K)}
\DeclareMathOperator{\IMK}{\mathcal{I}^{-}(K)}
\DeclareMathOperator{\JPK}{\mathcal{J}^{+}(K)}
\DeclareMathOperator{\JMK}{\mathcal{J}^{-}(K)}
\DeclareMathOperator{\JHPK}{\widehat{\mathcal{J}}^{+}(K)}
\DeclareMathOperator{\JHMK}{\widehat{\mathcal{J}}^{-}(K)}
\DeclareMathOperator{\KHPK}{\widehat{\mathcal{K}}^{+}(K)}
\DeclareMathOperator{\KHMK}{\widehat{\mathcal{K}}^{-}(K)}
\DeclareMathOperator{\LPK}{\mathcal{L}^{+}(K)}
\DeclareMathOperator{\LMK}{\mathcal{L}^{-}(K)}
\DeclareMathOperator{\tran}{^{\top}}
\DeclareMathOperator{\ri}{ri}
\DeclareMathOperator{\dom}{\mathop{\rm dom}}
\DeclareMathOperator{\DC-SOS}{\mathcal{DC-SOS}}
\newenvironment{proc}[2]
{\begin{algorithm}[#1]\floatname{algorithm}{#2:}}
	{\end{algorithm}\addtocounter{algorithm}{-1}}
\newcommand{\orcid}[1]{\href{https://orcid.org/#1}{\includegraphics[scale=1]{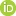}}}
\newtcolorbox{mybox}{breakable,coltitle=black,colbacktitle=gray!20,colframe=black,colback=white,coltext=black}
{}
\def\EMAIL#1{\href{mailto:#1}{#1}}
\begin{document}


\RUNAUTHOR{Y.S. Niu et al.}

\RUNTITLE{MVSK portfolio optimization based on accelerated DCA and SOS}

\TITLE{High-order Moment Portfolio Optimization via An Accelerated Difference-of-Convex Programming Approach and Sums-of-Squares}

\ARTICLEAUTHORS{%
\AUTHOR{Yi-Shuai Niu}
\AFF{Department of Applied Mathematics, The Hong Kong Polytechnic University, Hong Kong, \EMAIL{yi-shuai.niu@polyu.edu.hk}}
\AUTHOR{Ya-Juan Wang}
\AFF{School of Management, Fudan University, China,
\EMAIL{wangyajuan@fudan.edu.cn}}
\AUTHOR{Hoai An Le Thi}
\AFF{
	LGIPM, University of Lorraine, France; and
	Institut Universitaire de France (IUF), \EMAIL{hoai-an.le-thi@univ-lorraine.fr}}
\AUTHOR{Dinh Tao Pham}
\AFF{Laboratory of Mathematics, National Institute of Applied Sciences of Rouen, France, \EMAIL{pham@insa-rouen.fr}}
} 

\ABSTRACT{%
The Mean-Variance-Skewness-Kurtosis (MVSK) portfolio optimization model is a quartic nonconvex polynomial minimization problem over a polytope, which can be formulated as a Difference-of-Convex (DC) program. In this manuscript, we investigate four DC programming approaches for solving the MVSK model. First, two DC formulations based on the projective DC decomposition and the Difference-of-Convex-Sums-of-Squares (DC-SOS) decomposition are established, where the second one is novel. Then, DCA is applied to solve these DC formulations. The convergence analysis of DCA for the MVSK model is established. Second, we propose an accelerated DCA (Boosted-DCA) for solving a general convex constrained DC program involving both smooth and nonsmooth functions. The acceleration is realized by an inexact line search of the Armijo-type along the DC descent direction generated by two consecutive iterates of DCA. The convergence analysis of the Boosted-DCA is established. Numerical simulations of the proposed four DC algorithms on both synthetic and real portfolio datasets are reported. Comparisons with KNITRO, FILTERSD, IPOPT and MATLAB fmincon optimization solvers demonstrate good performance of our methods. Particularly, two DC algorithms with DC-SOS decomposition require less number of iterations, which demonstrates that DC-SOS decomposition can provide better convex over-approximations for polynomials. Moreover, the accelerated versions indeed reduce the number of iterations and achieve the best numerical results.}


\KEYWORDS{High-order moment portfolio optimization; Difference-of-Convex programming; Difference-of-Convex-Sums-of-Squares decomposition; Projective DC decomposition; Boosted-DCA}
\MSCCLASS{Primary: 91G10, 90C06; secondary: 90C29, 90C30, 90C90}
\ORMSCLASS{Primary: ; secondary: }
\HISTORY{}

\maketitle

\section{Introduction}
The concepts of portfolio optimization and diversification are fundamental to understand financial market and financial decision making. The major breakthrough came in \cite{markowitz1952} with the introduction of the mean-variance portfolio selection model (MV model) developed by Harry Markowitz (Nobel Laureate in Economics in 1990). This model provided an answer to the fundamental question: How should an investor allocate funds among the possible investment choices? Markowitz firstly quantified return and risk of a security, using the statistical measures of its expected return and variance. Then, he suggested that investors should consider return and risk together, and determine the allocation of funds based on their return-risk trade-off. Before Markowitz's seminal article, the finance literature had treated the interplay between return and risk in an ad hoc fashion. Based on MV model, the investors are going to find among the infinite number of portfolios that achieve a particular return objective with the smallest variance. The portfolio theory had a major impact on the academic research and financial industry, often referred to as ``the first revolution on Wall Street". More discussions about MV model can be found in the review article \cite{steinbach2001}.

For a long time, there is a confusion that the application of the MV model requires Gaussian return distribution. This is not true! For this issue, Markovitz has declared in \cite{markowitz2014} that ``the persistence of the Great Confusion - that MV analysis is applicable in practice only when return distributions are Gaussian or utility functions quadratic - is as if geography textbooks of 1550 still described the Earth as flat."
In fact, the normality of asset returns is not necessary in MV model, and has been widely rejected in empirical tests. Many return distributions in real market exhibit fat tails and asymmetry which will significantly affect portfolio choices and asset pricing \cite{arditti1975,jondeau2003}. E.g., \cite{harvey2000} showed that in the presence of positive skewness, investors may be willing to accept a negative expected return. There are some rich literature that attempted to model higher-order moments in the pricing of derivative securities, starting from the classic models of \cite{merton1976} (jump-diffusions) and \cite{heston1993} (stochastic volatility), see \cite{bhandari2009} for more related works. Therefore, many scholars suggested introducing high-order moments such as skewness (3rd-order moment) and kurtosis (4th-order moment) into portfolio optimization model. 

The first work attempted to extend the MV model to higher-order moments was proposed in \cite{jean1971}. Some noteworthy works such as \cite{arditti1975} and \cite{levy1979} were mainly focused on the mean-variance-skewness model (MVS model). Later, more extensions of high-order moment portfolio models adapted kurtosis were investigated by several authors (e.g., \cite{de2003incorporating}, \cite{de2004}, \cite{maringer2009}, and \cite{harvey2010} etc). From a mathematical point of view, a higher-order moment portfolio model can be viewed as an approximation of the general expected utility function, in which people consider the Taylor series expansion of the utility function and drop the higher-order terms from the expansion. Therefore, the classical MV model is in fact a rough approximation of the general utility function, and the higher-order moment model will be more accurate. The reader is referred to the excellent survey on the 60 years' development in portfolio optimization \cite{kolm2014} for more information about different portfolio selection models. 

Despite the advantages of the higher-order moment portfolio models, in practice, these models are however seldom used. There are some reasons, typically, practitioners rely upon a utility function based on mean-variance approximation, which is trusted to perform well enough \cite{levy1979}. Moreover, due to the limitations of computing power in the 20th century, constructing and solving a higher-order moment portfolio model is very difficult, e.g., a model with quartic polynomial approximation and several hundreds of assets is already intractable. Fortunately, with the rapid development of CPU and GPU hardware in the early of 21th century, as well as the adequate computer memory, the computing power available today can handle some higher-order moment portfolios (at least portfolios with moderate size). On the other hand, recently, we developed in \cite{niu2018} the Difference-of-Convex-Sums-of-Squares (namely, DC-SOS) decomposition technique for general multivariate polynomials, which provides us a robust tool to formulate any polynomial optimization to a DC program. Motivated by the above reasons, it is the right time to attack the higher-order moment portfolio optimization problems. 

In this paper, we will focus on a high-order moment portfolio model which takes \emph{mean}, \emph{variance}, \emph{skewness}, and \emph{kurtosis} into consideration, namely the MVSK model. It consists of maximizing the mean and skewness of the portfolio while minimizing the variance and kurtosis, which is in fact a multi-objective polynomial optimization problem. This problem can be further formulated as a weighting single objective quartic polynomial optimization problem with positive weights, called linear weighting method (see e.g., \cite{niu2011}). It is known that any optimal solution of the linear weighting formulation is also a Pareto optimal solution of the multi-objective formulation, but the reverse is not true. The linear weighting quartic polynomial formulation of the MVSK model is in general nonconvex and NP-hard. Thus, we cannot expect a polynomial time global optimization algorithm to solve this problem. The existing methods in the literatures aim at efficiently finding local optimal solutions for problems with moderate size such as: stochastic algorithms (Differential Evolution and Stochastic Differential Equation) in \cite{maringer2009}; DC programming approach based on a commonly used DC decomposition and DCA in \cite{niu2011}; classical nonlinear optimization approaches, e.g., sequential quadratic programming method, trust-region method, Lasserre's hierarchy, and Branch-and-Bound in \cite{niu2011}; machine learning approaches based on regularization and cross-validation in \cite{ban2016}; and multi-linear formulation based on non-negative symmetric tensors in \cite{chen2017}.

The major contributions in our paper include: (i) Develop a DC-SOS decomposition for the MVSK model; (ii) Design a Boosted-DCA for convex constrained DC program involving both smooth and non-smooth functions; (iii) Establish the convergence analysis of DCA for the MVSK model and Boosted-DCA for the convex constrained DC program. 

More specifically, four DCA-based algorithms (with and without acceleration) for solving the MVSK portfolio optimization model are proposed. Firstly, two DC decompositions are considered: the first one is based on a commonly used DC decomposition (namely, the projective DC decomposition) discussed in \cite{niu2011}; the second one is based on a DC-SOS decomposition which is new and more suitable for polynomial functions. Secondly, we apply DCA with the two DC decompositions. DCA with the first DC decomposition requires solving a convex quadratic minimization problem over the standard simplex in each iteration, which is equivalent to find a projection of a vector on the standard simplex, and can be solved very effectively using some explicit algorithms, e.g., BPPPA proposed in \cite{judice1992}, and direct projection methods proposed in \cite{minoux1984,held1974}; DCA with the second DC decomposition requires solving a convex quartic polynomial minimization problem over the standard simplex in each iteration, which can be effectively solved by using first- and second-order convex optimization approaches, e.g., the gradient method with and without acceleration (e.g., Nesterov's acceleration), the interior point method and the Newton method etc. Some efficient solvers are available such as IPOPT, KNITRO, FILTERSD, CVX and MATLAB fmincon. Note that the DC-SOS decomposition leads to a 4th degree convex overestimator, which should be a better convex approximation of the 4th degree nonconvex polynomial objective function than the convex quadratic overestimator used in the common DC decomposition. The superiority of the DC-SOS decomposition is also verified in the numerical simulations. The convergence analysis of DCA for the MVSK model is established which completes the work in \cite{niu2011}. Particularly, the global convergence of the sequence $\{x^k\}$ generated by DCA is established based on the well-known {\L}ojasiewicz subgradient inequality. Thirdly, we established an acceleration technique for general DC program over closed convex set. Different to the one proposed in \cite{artacho2018} which only focused on unconstrained smooth DC program, our Boosted-DCA is designed for general convex constrained DC program with either smooth or nonsmooth functions. The convergence analysis for the Boosted-DCA is established. Lastly, we implemented our four algorithms as a software package on MATLAB, namely \texttt{MVSKOPT}, and tested on both synthetic and real datasets of the MVSK model. Numerical comparisons with KNITRO, FILTERSD, IPOPT and MATLAB fmincon are reported, which demonstrate good performances of DC approaches and the nice numerical effect of the Boosted-DCA.

The paper is organized as follows: Section \ref{sec:mvskmodel} presents the MVSK model. After a brief introduction about DC program and DCA in Section \ref{sec:dcp&dca}, we focus on the presentation of two DC formulations (commonly used DC decompositions and DC-SOS decomposition), and establishing the convergence analysis of DCA for the MVSK model in Section \ref{sec:twoDCformulation&DCA}. The DC acceleration techniques are discussed in Section \ref{sec:boosteddca}, where the definition and properties of DC descent direction are introduced in Subsection \ref{subsec:dcdd}; the Armijo-type line search is discussed in Subsection \ref{subsec:Armijo}; the Boosted-DCA and its convergence are established in Subsection \ref{subsec:Boosted-DCA}. Applying Boosted-DCA to both the projective DC decomposition and the DC-SOS decomposition for the MVSK model is given in Section \ref{sec:UDCA&UBDCA}. Numerical simulations comparing four DCA-based algorithms and several classical nonlinear optimization solvers on both synthetic and real datasets are reported in Section \ref{sec:Simu}. Concluding remarks and some further topics are discussed in the last section.

\section{High-order Moment Portfolio Optimization Model}
\label{sec:mvskmodel} 
Consider a portfolio with $n$ assets. In this section, we investigate a high-order moment portfolio optimization model consists of the first 4th-order moments (Mean-Variance-Skewness-Kurtosis, namely MVSK). The portfolio model involving moments beyond 4th-order can be defined in a similar way. 
\subsection{Portfolio Inputs}
The inputs of the MVSK model consist of the first four order moments and co-moments of the portfolio returns which are estimated by \emph{sample moments and co-moments} defined as follows: Let $\E$ denote the expectation operator; let $n$ be the number of assets and $T$ be the number of periods; let $R_{i,t}$ be the return rate of the asset $i\in\{1,\ldots,n\}$ in the period $t\in \{1,\ldots,T\}$. The return rate of the asset $i$ is denoted by $R_i$, and $R=(R_{i})\in \R^n$ stands for the return rate vector. We have

\begin{enumerate}
	\item \textbf{Mean} (1st-order moment): denoted by $\mu = (\mu_i) \in \R^n$ whose $i$-th element $\mu_i$ is defined by
	\begin{equation}
	\label{eq:mu}
	\mu_i := \E(R_i) \approx \frac{1}{T}\sum_{t=1}^{T}R_{i,t}.
	\end{equation}
	\item \textbf{Variance} and \textbf{Covariance} (second central moment and co-moment): denoted by $\Sigma = (\sigma_{i,j})\in \R^{n^2}$ where $\sigma_{i,j}$ is defined by
	\begin{equation}\label{eq:Sigma}
	\sigma_{ij}:=\E[(R_{i}-\mu_{i})(R_{j}-\mu_{j})] \approx \frac{1}{T-1}\sum_{t=1}^{T}(R_{i,t}-\mu_i)(R_{j,t}-\mu_j).
	\end{equation}
	\item \textbf{Skewness} and \textbf{Co-skewness} (third central moment and co-moment): denoted by $S = (S_{i,j,k}) \in \R^{n^3}$ where $S_{i,j,k}$ is defined by
	\begin{align}
	\label{eq:S}
	S_{i,j,k} := &\E[(R_{i}-\mu_{i})(R_{j}-\mu_{j})(R_{k}-\mu_{k})] \approx \frac{1}{T}\sum_{t=1}^{T}(R_{it}-\mu_{i})(R_{jt}-\mu_{j})(R_{kt}-\mu_{k}).
	\end{align}
	\item \textbf{Kurtosis} and \textbf{Co-kurtosis} (fourth central moment and co-moment): denoted by $K = (K_{i,j,k,l})\in \R^{n^4}$ where $K_{i,j,k,l}$ is defined by
	\begin{align}\label{eq:K}
	K_{i,j,k,l}:= & \E[(R_{i}-\mu_{i})(R_{j}-\mu_{j})(R_{k}-\mu_{k})(R_{l}-\mu_{l})] \approx \frac{1}{T}\sum_{t=1}^{T}(R_{it}-\mu_{i})(R_{jt}-\mu_{j})(R_{kt}-\mu_{k})(R_{lt}-\mu_{l}).
	\end{align}
\end{enumerate}

These inputs can be written as tensors and easily computed from data using the formulas \eqref{eq:mu}, \eqref{eq:Sigma}, \eqref{eq:S} and \eqref{eq:K}. Note that these tensors have perfect symmetry, e.g., $\Sigma$ is a real symmetric positive semi-definite matrix, and the values of $S_{i,j,k}$ (resp. $K_{i,j,k,l}$) with all permutations of the index $(i,j,k)$ (resp. $(i,j,k,l)$) are equals. Therefore, we only need to compute $\binom{n+1}{2}$, $\binom{n+2}{3}$ and $\binom{n+3}{4}$ independent elements respectively. 

When dealing with these high-order moments and co-moments, it is convenient to ``slice" these tensors and create a big matrix from the slices. In our previous work \cite{niu2011}, we have discussed using Kronecker product $\otimes$ to rewrite co-skewness (resp. co-kurtosis) tensor to $n\times n^2$ (resp. $n\times n^3$) matrix by the formulations:
\[{\hat{S}} =
\E[(R-\mu)(R-\mu)\tran\otimes
(R-\mu)\tran];~\hat{K} = \E[(R-\mu)(R-\mu)\tran \otimes (R-\mu)\tran \otimes (R-\mu)\tran].\]
Then converting $\hat{S}$ and $\hat{K}$ into sparse matrices by keeping only the independent elements based on symmetry. This computing technique is very useful when dealing with large-scale cases. 

\subsection{Mean-Variance-Skewness-Kurtosis Portfolio Model}
Let us denote the decision variable of the portfolio (called \emph{portfolio weights}) as $x\in \R^n$. We assume that \emph{no short sales or leverage are allowed}, i.e., $x\geq 0$ and sums up to one, thus $x$ is restricted in the standard $(n-1)$-simplex $\Omega:= \{ x\in \R^n_+: e\tran x = 1\}$ where $e$ denotes the vector of ones. The first four order portfolio moments are functions of the portfolio decision variable $x$ defined as follows:
\begin{enumerate}
	\item Mean (1st-order portfolio moment): $m_1(x)= \mu\tran x.$
	\item Variance (2nd-order portfolio moment): $m_2(x)= x\tran \Sigma x.$
	\item Skewness (3rd-order portfolio moment): $m_3(x)= x^{\top} \hat{S} (x \otimes x)=\sum_{i,j,k=1}^{n}S_{i,j,k}~x_{i}x_{j}x_{k}.$
	\item Kurtosis (4th-order portfolio moment): $m_4(x)= x^{\top} \hat{K} (x \otimes x \otimes x)= \sum_{i,j,k,l=1}^{n}K_{i,j,k,l}~x_{i}x_{j}x_{k}x_{l}.$
\end{enumerate}

A rational investor's preference is the highest odd moments, as this would decrease extreme values on the side of losses and increase them on the side of gains. As far as even moments, the wider the tails of the returns distribution, the higher the even moments will be. Therefore, the investor prefers low even moments which implies decreased dispersion of the payoffs and less uncertainty of returns \cite{Scott1980}. Based on these observations, the MVSK portfolio optimization model consists of maximizing the expected return and skewness while minimizing the variance and kurtosis \cite{fabozzi2006,parpas2006}.

Let us denote $F:\R^n \to \R^4$ defined by:
$$F(x) := (-m_1(x), m_2(x), -m_3(x),m_4(x))\tran.$$
The MVSK model is described as a multi-objective optimization problem as:
$$\min \{F(x): x\in \Omega \},$$
which can be further investigated as a weighted single-objective optimization:
\begin{equation}\label{P_mvsk}
\min \{f(x):= c\tran F(x) : x\in \Omega\}, \tag{MVSK}
\end{equation}
where the parameter $c$ denotes the investor's preference verifying $c\geq 0$. For example, the risk-seeking investor will have more weights on $c_1$ and $c_3$, while the risk-aversing investor will have more weights on $c_2$ and $c_4$. It is well-known that any optimal solution of the weighted single-objective MVSK model is also an optimal solution of the multi-objective MVSK model (not conversely). We are interested in developing DC programming approaches for solving the weighted single-objective MVSK model. In next section, we will briefly outline some preliminaries about DC program and DCA algorithm. 

\section{Preliminaries on DC program and DCA}\label{sec:dcp&dca}
\subsection{DC program}
Let us denote $\Gamma_0(\R^n)$, the set of closed proper convex functions from $\R^n$ to $(-\infty, \infty]$ under the convention that $(\infty) - (\infty) = \infty$. The \emph{standard DC program} is defined by
\begin{equation}\label{prob:pdc}
\alpha = \inf \{f(x):=g(x)-h(x): x\in \R^n \},
\end{equation}
where $g$ and $h$ are both $\Gamma_0(\R^n)$ functions, and $\alpha$ is assumed to be finite, so that $\emptyset \neq \dom g \subset \dom h$. Let $\mathcal{C}\subset \R^n$ be a nonempty closed convex set, the convex constrained DC program is given by $$\inf \{g(x)-h(x) : x\in \mathcal{C} \},$$
which can be standardized as a standard DC program as 
$$\inf \{ (g+\chi_{\mathcal{C}})(x)-h(x): x\in \R^n\}$$
by introducing the indicator function of $\mathcal{C}$:
$$\chi_{\mathcal{C}}(x) = \begin{cases}
0, & \text{if } x\in \mathcal{C},\\
\infty, & \text{otherwise.}
\end{cases}$$
Clearly, $\chi_C\in \Gamma_0(\R^n)$ and both $g+\chi_{\mathcal{C}}$ and $h$ belong to $\Gamma_0(\R^n)$.

For any convex function $h\in \Gamma_0(\R^n)$ and a point $x^*\in \R^n$, we denote $\partial h(x^*)$ as the subdifferential of $h$ at $x^*$, defined by, see e.g. \cite{rockafellar1970,beck2017first}, 
$$\partial h(x^*): = \{ y\in \R^n: h(x) \geq h(x^*) + \langle x-x^*, y\rangle,\forall x\in \R^n \}.$$ 
If $x^*\notin \dom h$, then $\partial h(x^*)=\emptyset$. Note that $\partial h(x^*)$ is a closed convex set. If $x^*\in \text{int}(\dom h)$, then $\partial h(x^*)$ is nonempty and bounded. If $x^*\in\ri(\dom h)$, then $\partial h(x^*)$ is nonempty but it could be still unbounded, for example, when $\dim (\dom f)<n$. The subdifferential generalizes the derivative in the sense that $\partial h(x^*)$ reduces to the singleton $\{\nabla h(x^*)\}$ if $h$ is differentiable at $x^*$. 

For nonconvex nonsmooth settings, there are several important subdifferentials such as
\begin{definition}[Fr\'echet subdifferential]
	\label{def:F-subdiff}
	The F(r\'echet)-subdifferential of a proper closed function $f:\R^n\to (-\infty,\infty]$ at $x\in \dom f$ is defined by
	\begin{equation*}\label{eq:F-subdiff}
		\partial^F f(x):= \left \{  y\in\R^n ~|~ \liminf\limits_{z\neq x\atop z\to x } \frac{f(z)-f(x)-\langle y,z-x\rangle}{\|z-x\|} \geq 0\right\}.
	\end{equation*}
	If $x\notin \dom f$, then $\partial^F f(x)= \emptyset$. 
\end{definition}
\begin{definition}[Limiting subdifferential]
	\label{def:l-subdiff}
	The l(imiting)-subdifferential of a proper closed function $f:\R^n\to (-\infty,\infty]$ at $x\in \R^n$ is defined by
	\begin{equation*}\label{eq:l-subdiff}
		\partial^L f(x) :=\{y\in\R^n ~|~ \exists (x^k\to x, f(x^k)\to f(x), y^k \in \partial^F f(x^k)) \text{ such that } y^k\to y\}.
	\end{equation*}
\end{definition}
It is known that $\partial^F f(x)\subset \partial^L f(x)$, and $\partial^F f(x)$ is a closed convex set, while $\partial^L f(x)$ is closed. Both $\dom \partial^F f$ and $\dom \partial^L f$ are dense in $\dom f$ \cite{bolte2007lojasiewicz}. If the function $f$ is of class $C^1$, then the Fr\'echet-subdifferential and the limiting subdifferential coincide with the gradient, i.e., $\partial^F f(x) = \partial^L f(x)=\{\nabla f(x)\}.$ If $f$ is convex and $x\in \text{ri}(\dom f)$, then 
$\partial f(x) = \partial^F f(x) = \partial^L f(x).$

A point $x^*\in \R^n$ is called a \emph{limiting-stationary point} (or (generalized) critical point) of $f$ if $0\in \partial^L f(x^*)$. In DC optimization, $x^*$ is called a \textit{DC critical point} of the standard DC program if $\partial g(x^*)\cap \partial h(x^*)\neq \emptyset$ (or equivalently $0\in \partial g(x^*)-\partial h(x^*)$). If $g$ and $h$ are non-differentiable at $x^*$, then $\partial^L f(x^*)\subset \partial g(x^*)-\partial h(x^*)$. Especially, if $g$ (resp. $h$) is differentiable at $x^*$, then $\partial^L f(x^*)=\nabla g(x^*)-\partial h(x^*)$  (resp. $\partial^L f(x^*)=\partial g(x^*)- \nabla h(x^*)$). Particularly, if both $g$ and $h$ are differentiable, then the DC critical point $x^*$ reduces to the classical stationary point verifying the Fermat's condition for the unconstrained optimization problem \eqref{prob:pdc} as $\nabla f(x^*) = \nabla g(x^*)-\nabla h(x^*) = 0.$ 

Note that a DC critical point may not be a local minimizer except for some particular cases (e.g., when $f$ is locally convex at critical points). A stronger definition than the DC critical point, namely \emph{strongly DC critical point}, which is a DC critical point $x^*$ verifying $\emptyset \neq \partial h(x^*) \subset \partial g(x^*)$, i.e., $f'(x^*;x-x^*) \geq 0$ for all $x\in \dom g$. The notation $f'(x;d)$ stands for the directional derivative at $x$ in the direction $d$ defined by:
$$f'(x;d) = \lim\limits_{t\downarrow 0} \frac{f(x+td)-f(x)}{t}.$$
The strong DC criticality often coincides with the commonly used \emph{directional stationarity} (cf. d-stationarity), the d-stationarity depends on $f$ but the strong DC criticality depends on $g$ and $h$. A strongly DC critical point may not be a local minimizer. For instance, let $f(x)= 1 - \|x\|^2$ with a DC decomposition $g(x)= 1$ and $h(x)= \|x\|^2$, then the point $x^*=0$ verifies $\nabla g(x^*)-\nabla h(x^*)=0$, which is indeed a strongly DC critical point of $f$ but not a local minimizer of $f$. More discussions about the d-stationarity and the strong DC criticality are referred to \cite{Lethi2018Convergence}.

\subsection{DCA}
An efficient DC Algorithm for solving the standard DC program, namely DCA, was first introduced by Pham Dinh Tao in 1985 as an extension of the subgradient method, and has been extensively developed by Le Thi Hoai An and Pham Dinh Tao since 1994. The readers are referred to  \cite{pham1997,pham1998,le2003,pham2005,pham2016,le2018} and the references therein. 

DCA consists of solving the standard DC program by a sequence of convex subproblems
\begin{equation}
x^{k+1}\in \argmin \{ g(x) - \langle x, y^k \rangle : x\in \R^n \},
\end{equation}
where $y^k\in \partial h(x^k)$. This convex subproblem is in fact derived from convex overestimation of the DC function $f$ at the current iteration point $x^k$, denoted $f^k$, which is constructed by linearizing $h$ at $x^k$ as for all $x\in \R^n$,
$$
f(x) = g(x)-h(x)\leq g(x) - (h(x^k)+\langle x-x^k, y^k \rangle)= f^k(x),
$$ 
where $y^k\in \partial h(x^k)$. 

DCA applied to convex constrained DC program yields a similar scheme as:
$$x^{k+1}\in \argmin \{ g(x) - \langle x, y^k \rangle : x\in \mathcal{C}\}$$
with $y^k\in \partial h(x^k)$. 

For stopping criteria of DCA, we often use one of the following conditions:
\begin{itemize}
	\item[$\bullet$] $\Delta f = |f(x^{k+1})-f(x^{k})|/(1+|f(x^{k+1})|)\leq \varepsilon_1$.
	\item[$\bullet$] $\Delta x = \|x^{k+1}-x^{k}\|/(1+\|x^{k+1}\|)\leq \varepsilon_2$.
\end{itemize} 
Note that $\|x^{k+1}-x^{k}\|/(1+\|x^{k+1}\|)$ (resp. $|f(x^{k+1})-f(x^{k})|/(1+|f(x^{k+1})|)$) provides a compromised error between the absolute error and the relative error. If $\|x^k\|$ (resp. $f(x^k)$) is close to $0$, then it is an approximation of the absolute error, otherwise, it is an approximation of the relative error. 

\begin{theorem}[Convergence theorem of DCA, see e.g., \cite{pham1997}]
DCA applied to the standard DC program starting with an initial point $x^0\in \dom \partial h$ generates a sequence $\{x^k\}$ such that
\begin{itemize}
	\item[$\bullet$] The sequence $\{f(x^k)\}$ is decreasing and bounded from below, thus convergent.
	\item[$\bullet$] Every limit point of the sequence $\{x^k\}$ is a DC critical point of the standard DC program.
\end{itemize}
\end{theorem}

For DC program with continuously differentiable $h$, we can prove in the next theorem that any DC critical point is strongly DC critical, whose proof is given in Appendix \ref{appendix:A_thm:convtod-stationaryforsmoothdcp}. 
\begin{theorem}\label{thm:convtod-stationaryforsmoothdcp}
	For convex constrained DC program over a nonempty closed convex set $\mathcal{C}\subset \R^n$, if $h$ is continuously differentiable, then DCA starting from an initial point $x^0\in \R^n$ will generate a sequence $\{x^k\}$ such that every limit point is a strongly DC critical point.
\end{theorem}

In general, we only have the subsequential convergence of the sequence $\{x^k\}$ generated by DCA. Its global convergence need more assumptions. We recall the well-known \emph{{\L}ojasiewicz subgradient inequality} established by Bolte-Daniilidis-Lewis \cite[Theorem 3.1]{bolte2007lojasiewicz}, which is important to the global convergence analysis for DCA and for our proposed algorithms in this paper.
\begin{theorem}[{\L}ojasiewicz subgradient inequality, see Theorem 3.1, \cite{bolte2007lojasiewicz}]\label{thm:Loja-ineq}
	Let $f:\R^n\to (-\infty,\infty]$ be a subanalytic function with closed domain, and assume that $f$ is continuous over its domain. Let $x^*\in \R^n$ be a limiting-stationary point of $f$. Then there exists a {\L}ojasiewicz exponent $\theta\in [0,1)$, a finite constant $L>0$, and a neighbourhood $\mathcal{V}$ of $x^*$ such that 
	\begin{equation}
		\label{eq:Loja-ineq}
		|f(x)-f(x^*)|^{\theta}\leq L \|y\|, \forall x\in \mathcal{V}, y\in \partial^L f(x),
	\end{equation}
where the convention $0^0=1$ is adopted.
\end{theorem} 

Note that the notions and properties of the subanalytic function/set are classical and discussed in \cite{lojasiewicz1965ensembles,lojasiewicz1993geometrie,bierstone1988semianalytic,bolte2007lojasiewicz}. 
The class of subanalytic sets (resp. functions) contains all analytic sets (resp. functions). As a matter of fact, any polynomial function is analytic and thus subanalytic as well. The polyhedral set of $\R^n$ is a subanalytic set, and its indicator function is a subanalytic function. Subanalytic sets and subanalytic functions enjoy interesting properties. For instance, the class of subanalytic sets is closed under locally finite unions/intersections, relative complements, and the usual projection. The distance function to a subanalytic set is subanalytic; the sum/difference of continuous and subanalytic functions is also subanalytic.

\section{DC formulations and DCA for \eqref{P_mvsk}}\label{sec:twoDCformulation&DCA}
The \eqref{P_mvsk} model, as a nonconvex quartic polynomial optimization problem, can be no doubt formulated as a DC programming problem, since any polynomial function as a $\mathcal{C}^{\infty}$ function is indeed a DC function. However, constructing a DC decomposition for a polynomial of degree higher than $2$ is often a difficult problem. In \cite{niu2011}, we proposed a DC decomposition for \eqref{P_mvsk} model based on a commonly used DC decomposition in form of $f(x)=g(x) - h(x)$ with $g(x)=\frac{\eta}{2}\|x\|^2$ and $h(x)= \left(\frac{\eta}{2}\|x\|^2 - f(x)\right)$ over the standard simplex $\Omega$. The parameter $\eta>0$ should be large enough to ensure the convexity of $g$ and $h$ over $\Omega$. In other words, $f$ is supposed to be $\eta$-smooth (with a finite positive $\eta$) over $\Omega$, which is always true for any polynomial function $f$ since the spectral radius of the Hessian matrix $\nabla^2 f(x)$ over $\Omega$ is finite. However, this kind of DC decomposition requires estimating a large-enough parameter $\eta$ to ensure the local convexity of $h$ over $\Omega$. The quality of the DC decomposition depends on $\eta$, and a smaller (but large-enough) $\eta$ leads to a better DC decomposition.  Motivated by a new DC decomposition technique based on Sums-of-Squares for polynomials proposed in our recent work \cite{niu2018}, namely \emph{Difference-of-Convex-Sums-of-Squares} (cf. DC-SOS) decomposition, we can construct a new type of DC decomposition, without estimating any parameter $\eta$, in form of difference of two convex sums-of-squares. In this section, we will briefly present the commonly used DC decomposition and its corresponding DCA, then we focus on establishing the new DC-SOS decomposition and the corresponding DCA. 

\subsection{Commonly used DC decomposition and DCA for \eqref{P_mvsk} model}
The commonly used DC decompositions for minimizing a real-valued $C^2$ function $f$ over a compact convex set $\mathcal{C}\subset \R^n$ has been proposed in several literatures \cite{Lethi2000efficient,pham1997,pham1998}. In this subsection, we will briefly summarize these decompositions as follows: Let $f=f_1-f_2$ be a decomposition of $f$, where $f_1$ and $f_2$ are not supposed to be convex or differentiable, such that there exists a finite parameter $\eta>0$ to ensure that $$g(x)=\frac{\eta}{2} \|x\|_2^2 + \chi_{\mathcal{C}}(x) +f_1(x) \text{ and } h(x)=\frac{\eta}{2}\|x\|^2+f_2(x)$$ are both proper closed convex functions over $\R^n$. Particularly, there are two well-known special forms frequently used in practice: 
	\begin{enumerate}
		\item \emph{Projective DC decomposition} ($f_1=0$ and $f_2=f$): 
		$$g(x)=\frac{\eta}{2} \|x\|_2^2 + \chi_{\mathcal{C}}(x),~ h(x)=\frac{\eta}{2}\|x\|^2 - f(x).$$
		\item \emph{Proximal DC decomposition} ($f_1=f$ and $f_2=0$):
		$$g(x)=\frac{\eta}{2} \|x\|_2^2 + \chi_{\mathcal{C}}(x) +f(x),~ h(x)=\frac{\eta}{2}\|x\|^2.$$ 
	\end{enumerate} 
\paragraph{DCA with the projective DC decomposition for \eqref{P_mvsk} model}
The projective DC decomposition (cf. universal DC decomposition) applied to \eqref{P_mvsk} model is proposed in \cite{niu2011} as:
\begin{equation}\label{prob:DC_univ}
\min \{f(x) = G(x)-H(x): x \in \Omega\},
\end{equation}
where 
\begin{equation}
\label{eq:dcd-type1}
G(x)=\frac{\eta}{2} \|x\|_2^2, ~ H(x)=
\frac{\eta}{2} \|x\|^2_2 - f(x).
\end{equation} The parameter $\eta$ is estimated in \cite[Proposition 2]{niu2011} as:
\begin{equation}
\label{eq:estimateeta}
\eta = 2c_2 \|
\Sigma\|_{\infty} + 6c_3\max_{1\leq i \leq
	n}(\sum_{j,k=1}^{n}|S_{i,j,k}|)+ 12 c_4\max_{1\leq i \leq
	n}(\sum_{j,k,l=1}^{n}|K_{i,j,k,l}|)>0.
\end{equation}
The gradient of $H$ is computed by:
$$\nabla H(x) = \eta x + c_1 \mu -
2c_2 \Sigma x + c_3 \frac{\nabla^2 m_3(x) x}{2} - c_4 \frac{\nabla^2 m_4(x)
	x}{3},$$
where the Hessian matrices $\nabla^2 m_3(x)$ and $\nabla^2 m_4(x)$ are given explicitly by $$\nabla^{2} m_3(x) = \left( 6\sum_{k=1}^{n}~S_{i,j,k}x_{k} \right)_{(i,j)\in \NN^2}, \nabla^2
m_4(x) = \left(12\sum_{k,l=1}^{n}K_{i,j,k,l}~x_{k}x_{l} \right)_{(i,j)\in \NN^2}.$$ 

DCA applied to the projective DC decomposition requires solving a sequence of convex constrained strongly convex quadratic subproblems:
\begin{equation}\label{prob:qcp}
x^{k+1} = \argmin_{x\in \Omega} \frac{\eta}{2} \|x\|_2^2  - \langle x, \nabla H(x^k)\rangle,
\end{equation}
whose optimal solution $x^{k+1}$ exists and is unique due to the strong convexity of the objective function in \eqref{prob:qcp} and the compactness of the $\Omega$. Problem \eqref{prob:qcp} is equivalent to 
$$x^{k+1} = \argmin_{x\in \Omega} \| x - \frac{\nabla H(x^k)}{\eta} \|^2,$$
which is the projection of the vector $\nabla H(x^k)/\eta$ on the standard simplex $\Omega$, and can be computed explicitly by using either the strongly polynomial algorithm \emph{Block Pivotal Principal Pivoting Algorithm} (BPPPA) introduced in \cite{niu2011,judice1992}, or the explicit \emph{direct projection methods} given in \cite{minoux1984,held1974}. 

We call the DCA with the projective DC decomposition as UDCA summarized in Algorithm \ref{alg:UDCA}, whose convergence theorem is not given in \cite{niu2010}. Here we complete the convergence analysis described in Theorem \ref{thm:convUDCA} whose proof is developed in Appendix \ref{appendix:D_thm:convUDCA}.

\begin{algorithm}[ht!]
	\caption{UDCA for \eqref{P_mvsk}}
	\label{alg:UDCA}
	\begin{algorithmic}[1]
		\REQUIRE Initial point $x^0\in \R^n_+$; Tolerance for optimal value $\epsilon_1 >0$; Tolerance for optimal solution $\epsilon_2>0$;
		\ENSURE Computed solution $x^*$;
		
		\STATE $k\leftarrow 0$; $\Delta f\leftarrow \infty; \Delta x\leftarrow \infty$;
		\WHILE{$\Delta f > \epsilon_{1}$ or $\Delta x > \epsilon_{2}$}
		
		\STATE \textbf{Compute $x^{k+1}$ by solving problem \eqref{prob:qcp}};
		\STATE $f^*\leftarrow f(x^{k+1})$; $x^*\leftarrow x^{k+1}$;
		\STATE $\Delta f \leftarrow |f^* - f(x^{k})|/(1+|f^*|)$; $\Delta x \leftarrow \|x^*-x^{k}\|/(1+\|x^*\|)$;
		\STATE $k \leftarrow k+1$;
		\ENDWHILE
	\end{algorithmic}
\end{algorithm}

\begin{theorem}[Convergence theorem of UDCA for \eqref{P_mvsk}] \label{thm:convUDCA}
	UDCA for \eqref{P_mvsk} starting from any initial point $x^0\in \R^n$ (may not be in $\Omega$) will generate a sequence $\{x^k\}$ such that
	\begin{enumerate}
		\item[(i)] (sufficiently descent property) for every $k=1,2,\ldots$, we have \begin{equation}
			f(x^k)-f(x^{k+1}) \geq \frac{\eta}{2}\|x^{k}-x^{k+1}\|^2.
		\end{equation} 
	\item[(ii)] (convergence of $\{f(x^k)\}$) the sequence $\{f(x^k)\}$ is non-increasing and bounded from below, thus convergent.
	\item[(iii)] (convergence of $\{\|x^k-x^{k+1}\|\}$) the sequence $\{\|x^k-x^{k+1}\|\}$ converges to $0$ as $k\to \infty$.
	\item[(iv)] (summable properties) both $\sum_{k\geq 0} \|x^{k+1}-x^{k}\|^2 < \infty$ and $\sum_{k\geq 0} \|x^{k+1}-x^{k}\| < \infty$.
	\item[(v)] (convergence of $\{x^k\}$) the sequence $\{x^k\}$ converges to a strongly DC critical point of \eqref{P_mvsk}, which is also a KKT point of \eqref{P_mvsk}.
	\end{enumerate}
\end{theorem}

\paragraph{DCA with the proximal DC decomposition for \eqref{P_mvsk} model} DCA applied to the proximal DC decomposition for problem \eqref{P_mvsk} requires solving convex subproblems
\begin{equation}
	\label{prob:proximalDCA}
	x^{k+1}\in \argmin\{\frac{\eta}{2} \|x\|_2^2 + f(x) + \eta \langle x^k ,x\rangle: x\in \Omega \},
\end{equation}
which may be expensive numerically. Moreover, it seems that finding a stationary point of the convex problem requires almost the same amount of computation as finding a stationary point of the original problem \eqref{P_mvsk}. Therefore, we are not interested in the proximal DC decomposition in practice, except that problem \eqref{prob:proximalDCA} has a closed-form solution or can be solved efficiently.

\subsection{DC-SOS decomposition and DCA for \eqref{P_mvsk} model}
The initial motivation for proposing the DC-SOS decomposition for polynomials is to establish a DC decomposition for any polynomial without the compactness of the convex set, and without the dependence of the parameter $\eta$. The basic idea of DC-SOS decomposition is to represent any polynomial as difference of two convex and sums-of-squares polynomials, it has been proved in \cite{niu2018} that any polynomial can be rewritten in form of DC-SOS. In this subsection, after a short presentation on preliminaries of DC-SOS decomposition, we will focus on constructing a suitable DC-SOS decomposition for \eqref{P_mvsk} model and developing the corresponding DCA. 
\subsubsection{Preliminaries on DC-SOS decomposition}
\begin{definition}[DC-SOS decomposition, see \cite{niu2018}]\label{def:DC-SOS}
	A polynomial $p$ is called \emph{difference-of-convex-sum-of-squares} (DC-SOS) if there exist convex-sum-of-squares (CSOS) polynomials $s_1$ and $s_2$ such that $p = s_1 - s_2.$ 
	\begin{enumerate}
		\item[$\bullet$] The components $s_1$ and $s_2$ are called \emph{DC-SOS components} of $p$;
		\item[$\bullet$] The set of all DC-SOS polynomials in $\R[x]$ is denoted by $\DC-SOS_n$;
	\end{enumerate}
\end{definition}
The next theorem shows the equivalence among the vector spaces $\R[x]$ and $\DC-SOS_n$ as well as the minimal degree for DC-SOS components.
\begin{theorem}[See \cite{niu2018}]\label{thm:equivDC-SOS} Any polynomial can be presented in DC-SOS and  
	\begin{itemize} 
		\item[$\bullet$] For any DC-SOS components $s_1$, $s_2$ of $p \in \R[x]$, we have $\max\{\deg(s_1),\deg(s_2)\} \geq 2\lceil\frac{\deg(p)}{2} \rceil$.
		\item[$\bullet$] There exist DC-SOS components $s_1$, $s_2$ of $p\in \R[x]$ such that $\max\{\deg(s_1),\deg(s_2)\}= 2\lceil\frac{\deg(p)}{2} \rceil$.
	\end{itemize}
\end{theorem}
The complexity for constructing DC-SOS decompositions is:
\begin{theorem}[See \cite{niu2018}]\label{thm:complexityforD-SOSandDC-SOS}
	Any polynomial $p\in \R_d[x]$ can be rewritten as DC-SOS for any desired precision in polynomial time by solving an SDP (Semi-Definite Program).
\end{theorem}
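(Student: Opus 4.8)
The plan is to reduce the statement to the feasibility of a semidefinite program attached to the Gram-matrix (moment-matrix) representation of $p$, and then to invoke the polynomial-time solvability of SDP by interior-point methods. Write $2d:=2\lceil\deg(p)/2\rceil$ for the degree bound furnished by Theorem~\ref{thm:equivDC-SOS}, let $z_d(x)$ be the vector of all monomials of degree $\le d$, of length $N:=\binom{n+d}{n}$, and recall that every monomial $x^\alpha$ with $|\alpha|\le 2d$ factors as $x^\beta x^\gamma$ with $|\beta|,|\gamma|\le d$. Hence $p$ admits a (highly non-unique) symmetric Gram matrix $Q\in\R^{N\times N}$ with $p(x)=z_d(x)\tran Q\, z_d(x)$, and the admissible $Q$'s form an affine subspace $\mathcal G_p$ of $\R^{N\times N}$ cut out by the linear coefficient-matching equations $\sum_{\beta+\gamma=\alpha}Q_{\beta\gamma}=p_\alpha$.

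For the D-SOS case I would use that a decomposition $p=s_1-s_2$ with $s_1,s_2\in\SOS_{n,2d}$ is exactly the same datum as a splitting $Q=Q_1-Q_2$ with $Q_1,Q_2\succeq 0$ and $Q_1-Q_2\in\mathcal G_p$: a factorization $Q_i=B_i\tran B_i$ exhibits $z_d\tran Q_i z_d$ as a sum of squares of polynomials of degree $\le d$, and conversely an SOS polynomial of degree $\le 2d$ is a sum of squares of polynomials of degree $\le d$, hence has a positive semidefinite Gram matrix. I would therefore solve the SDP
\[\text{find~}Q_1,Q_2\succeq 0\quad\text{subject to}\quad z_d(x)\tran(Q_1-Q_2)z_d(x)=p(x),\]
optionally appending a linear objective such as $\min\ \operatorname{tr}(Q_1)+\operatorname{tr}(Q_2)$ to single out a ``small'' decomposition. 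Feasibility is immediate, and even explicit: take any $Q\in\mathcal G_p$ and let $Q=Q_+-Q_-$ be its positive/negative spectral parts, $Q_\pm\succeq 0$. The SDP has $O(N^2)$ scalar variables and $O(\binom{n+2d}{n})$ linear constraints, so for fixed degree it is solved to any prescribed accuracy in time polynomial in $n$ (indeed in the size of $p$) by standard interior-point methods; this settles the D-SOS assertion.

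For the DC-SOS case the two components must in addition be convex. I would first run the D-SOS SDP to get $s_1,s_2\in\SOS_{n,2d}$ with $p=s_1-s_2$, and then convexify both at once with the explicit CSOS polynomial $\theta_c(x):=c\sum_{k=1}^{d}\|x\|^{2k}$, $c>0$: each $\|x\|^{2k}=(x\tran x)^k$ is a sum of squares and is convex, with $\nabla^2\|x\|^{2k}(x)=2k\|x\|^{2k-2}I+4k(k-1)\|x\|^{2k-4}xx\tran\succeq 2k\|x\|^{2k-2}I$. Because every entry of $\nabla^2 s_i$ is a polynomial of degree $\le 2d-2$ whose coefficients depend linearly on those of $s_i$, one has $\|\nabla^2 s_i(x)\|_{\operatorname{op}}\le C_i\sum_{k=1}^{d}\|x\|^{2k-2}$ for an explicitly computable, polynomially bounded constant $C_i$; taking $c\ge\tfrac12\max(C_1,C_2)$ then gives $\nabla^2\theta_c(x)\succeq\|\nabla^2 s_i(x)\|_{\operatorname{op}}I\succeq-\nabla^2 s_i(x)$ for every $x\in\R^n$ and $i=1,2$. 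Hence $\tilde s_i:=s_i+\theta_c$ is a sum of SOS polynomials (so SOS) with everywhere positive semidefinite Hessian (so convex), i.e. $\tilde s_1,\tilde s_2\in\CSOS_{n,2d}$, and $p=\tilde s_1-\tilde s_2$ is the required DC-SOS decomposition. Alternatively one may keep a single SDP by adding to the D-SOS program the constraints that $y\tran\nabla^2 s_i(x)\,y$ be SOS in $(x,y)$ for $i=1,2$ (SOS-convexity), which are again linear matrix inequalities in the coefficients.

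The step I expect to be the main obstacle is establishing feasibility of the DC-SOS program, i.e. that convex components of degree $\le 2d$ can be produced within an SDP: a convex SOS polynomial need not be SOS-convex, so this does not follow formally from Theorem~\ref{thm:equivDC-SOS}. The explicit $\theta_c$ construction above already resolves it effectively, the only work being the uniform Hessian-domination bound $\|\nabla^2 s_i(x)\|_{\operatorname{op}}\le C_i\sum_k\|x\|^{2k-2}$ and the check that $c$ (hence the SDP data) stays polynomially bounded in the size of $p$. Conceptually it can also be seen structurally: by Proposition~\ref{prop:sos&csos} the proper cone $\CSOS_{n,2d}$ has nonempty interior in $\R_{2d}[x]$, so fixing any $g_0$ in that interior one has $p+cg_0\in\CSOS_{n,2d}$ and $cg_0\in\CSOS_{n,2d}$ for all large $c$, whence $p=(p+cg_0)-cg_0$ is DC-SOS. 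Everything else --- assembling $\mathcal G_p$, the square-root factorizations, the Hessian computations --- is routine linear algebra, and the sole contribution to the running time is the interior-point solve.
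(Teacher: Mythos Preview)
The present paper does not prove this theorem; immediately after stating it the authors write that the proofs of Theorems~\ref{thm:equivDC-SOS} and~\ref{thm:complexityforD-SOSandDC-SOS} ``can be found in our paper~\cite{niu2018},'' and they add that the SDP route is invoked ``only for theoretical proof of polynomial-time constructibility.'' So there is no in-text argument to compare against.

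Your argument is essentially correct and is the natural one. The D-SOS reduction to the Gram-matrix SDP is standard, and your observation that the spectral split $Q=Q_+-Q_-$ of any Gram matrix already gives an explicit feasible point (even bypassing the SDP) is a nice touch. For DC-SOS, the two-step route---first obtain $p=s_1-s_2$ with $s_i\in\SOS_{n,2d}$, then set $\tilde s_i=s_i+\theta_c$ with $\theta_c=c\sum_{k=1}^d\|x\|^{2k}$---does produce $\tilde s_i\in\CSOS_{n,2d}$: each $\|x\|^{2k}$ is SOS, so $\tilde s_i$ is SOS; and your Hessian-domination bound $\|\nabla^2 s_i(x)\|_{\mathrm{op}}\le C_i\sum_k\|x\|^{2k-2}$ (a routine coefficient estimate) makes $\tilde s_i$ convex for a polynomially computable $c$. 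The final proper-cone argument via Proposition~\ref{prop:sos&csos} is also valid as a clean non-constructive existence proof.

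One point to tighten. Your ``alternative single SDP,'' which adds the SOS-convexity constraints $y^\top\nabla^2 s_i(x)\,y\in\SOS$, is \emph{not} shown feasible by the $\theta_c$ construction: you prove $s_i+\theta_c$ is convex and SOS (hence CSOS), but that only makes $y^\top\nabla^2(s_i+\theta_c)(x)\,y$ nonnegative in $(x,y)$, not SOS. CSOS and SOS-convex are different conditions, and neither implies the other in general. This does not harm your main proof---the first route already delivers a polynomial-time DC-SOS decomposition via one SDP plus explicit post-processing---but the last paragraph reads as if $\theta_c$ certifies feasibility of the SOS-convex SDP, which it does not without additional argument. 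Either drop the alternative or supply a separate feasibility witness (e.g.\ show that $y^\top\nabla^2\theta_1(x)\,y$ lies in the interior of the relevant SOS cone, which is plausible but needs its own Gram-matrix check).
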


Note that using SDP is only for theoretical proof of polynomial-time constructibility of DC-SOS decompositions for any desired precision. However, solving an SDP is not suggested in practice, which often leads to an approximate DC-SOS decomposition, and whose construction requires solving a large-scale SDP for high-order polynomials, despite that SDP can be solved to any desired precision via polynomial-time interior point method. In fact, in many practical applications, we do not need to solve any SDP to find an \textbf{exact} DC-SOS decomposition. There are several practical DC-SOS decomposition techniques as established in \cite{niu2018}.

\subsubsection{DC-SOS decomposition for \eqref{P_mvsk} model}
For the \eqref{P_mvsk} model, we suggest using the parity DC-SOS decomposition algorithm \cite{niu2018}. The basic idea of the parity DC-SOS decomposition is based on the fact that any polynomial can be factorized as multiplications and additions of three elementary cases whose DC-SOS decompositions are easily computed as follows:

\noindent $\rhd$ \underline{For $x_ix_j$}: a DC-SOS decomposition is 
\begin{equation}\label{eq:DC-SOS_elementarycase1}
x_ix_j=\frac{1}{4}(x_i+x_j)^2-\frac{1}{4}(x_i-x_j)^2 \text{ or } x_ix_j=\frac{1}{2}(x_i+x_j)^2-\frac{1}{2}(x_i^2 + x_j^2).
\end{equation} 
A single variable $x_i$ is a special case of $x_ix_j$ with $x_j=1$.

\noindent $\rhd$  \underline{For $x_i^{2k}, k\in \N$}: a DC-SOS decomposition is
\begin{equation}\label{eq:DC-SOS_elementarycase2}
x_i^{2k} = x_i^{2k} - 0.
\end{equation}

\noindent $\rhd$  \underline{For $p\times q$ with $(p,q)\in \DC-SOS^2$}: Let $p_1-p_2$ and $q_1-q_2$ be DC-SOS decompositions of $p$ and $q$, then a DC-SOS decomposition of $p\times q$ is given by
\begin{equation}\label{eq:DC-SOS_elementarycase3}
p\times q 
= \frac{1}{2}[(p_1+q_1)^2 + (p_2+q_2)^2] - \frac{1}{2}[(p_1+q_2)^2 + (p_2+q_1)^2].
\end{equation}

For the \eqref{P_mvsk} model, we only need to apply the parity DC-SOS decomposition to polynomials $m_3$ and $m_4$, since $m_1$ and $m_2$ are already convex (linear for $m_1$ and quadratic convex for $m_2$).
\paragraph{DC-SOS decomposition for $m_3$}
By the symmetry of the co-skewness tensor $S$, we can rewrite $m_3$ as
\begin{align*}
m_3(x) = &\sum_{i,j,k=1}^{n}S_{i,j,k}~x_ix_jx_k = \sum_{i=1}^{n}S_{i,i,i}~x_{i}^3 + \binom{3}{1} \sum_{i=1}^{n}\sum_{k\neq i}S_{i,i,k}~x_{i}^2x_{k} + 3! \sum_{1\leq i<j<k\leq n}S_{i,j,k}~x_{i}x_{j}x_{k}.
\end{align*}
Let $\NN=\{1,\ldots,n\}$, $\PP=\{(i,k): i\in \NN, k\neq i\}$, and $\QQ=\{(i,j,k): 1\leq i<j<k\leq n\}$, with sizes $|\NN|=n, |\PP|=n(n-1),$ and $|\QQ|=\binom{n}{3}$, then the expression of $m_3$ is simplified as:
$$m_3(x) = \sum_{i\in \NN}S_{i,i,i}~x_{i}^3 + 3 \sum_{(i,k)\in \PP}S_{i,i,k}~x_{i}^2x_{k} + 6 \sum_{(i,j,k)\in \QQ}S_{i,j,k}~x_{i}x_{j}x_{k}.$$
There are three types of monomials $x_i^2x_k$, $x_i x_j x_k$ and $x_i^3$ in $m_3$ whose DC decompositions based on DC-SOS can be easily established. Then a DC-SOS decomposition for $m_3$ is given by $$m_3(x) =  g_{m_3}(x) - h_{m_3}(x),$$
where $g_{m_3}$ and $h_{m_3}$ are both convex functions on $\R^n$ defined in \eqref{eq:gm3} and \eqref{eq:hm3}. More details for computing $g_{m_3}$, $h_{m_3}$ and $\nabla h_{m_3}$ are described in Appendix \ref{appendix:B_m3}.

\paragraph{DC-SOS decomposition for $m_4$}
A DC-SOS decomposition for $m_4$ is constructed in a similar way as $m_3$. We firstly rewrite $m_4$ as  
\begin{align*}
m_4(x) = & \sum_{i\in \NN}K_{i,i,i,i}~x_{i}^4 + 4 \sum_{(i,k)\in \PP}K_{i,i,i,k}~x_{i}^3x_{k} + 6 \sum_{(i,k)\in\PPH}K_{i,i,k,k}~x^2_{i}x^2_{k} \\
& +  12 \sum_{(i,j,k)\in \QQH}K_{i,i,j,k}~x^2_{i}x_{j}x_{k} + 24 \sum_{(i,j,k,l)\in \RR}K_{i,j,k,l}~x_{i}x_{j}x_{k}x_{l}.
\end{align*}
where 
$\NN=\{1,\ldots,n\}$, $\PP=\{(i,k): i\in \NN, k\neq i\}$, $\PPH=\{(i,k)\in \PP: k>i\}$, $\QQH=\{(i,j,k): i\in \NN, (j<k)\neq i\}$, and $\RR=\{(i,j,k,l): 1\leq i<j<k<l\leq n \}$. The sizes of these sets are $|\NN|=n, |\PP|=n(n-1), |\PPH|=\frac{n(n-1)}{2}, |\QQH|=n\binom{n-1}{2},$ and $|\RR| = \binom{n}{4}$. 

Five types of monomials $x_i^4$, $x_i^3x_k$, $x_i^2 x_k^2$, $x_i^2 x_j x_k$ and $x_i x_j x_k x_l$ are considered whose DC decompositions based on DC-SOS can be easily established. A DC decomposition for $m_4$ is given by $$m_4(x) =  g_{m_4}(x) - h_{m_4}(x),$$
where $g_{m_4}$ and $h_{m_4}$ are convex functions on $\R^n$ defined in \eqref{eq:gm4} and \eqref{eq:hm4}. The computations of $g_{m_4}$, $h_{m_4}$ and $\nabla h_{m_4}$ are summarized in Appendix \ref{appendix:C_m4}. 

\paragraph{DC-SOS decomposition for $f$}\label{subsec:dcp}
Based on above discussions, a DC decomposition for the polynomial objective function $f$ of \eqref{P_mvsk} model based on DC-SOS decomposition is given by
\begin{align*}
f(x) =& -c_1 m_1(x) + c_2 m_2(x) - c_3 m_3(x) + c_4 m_4(x) = \bar{G}(x) - \bar{H}(x),
\end{align*}
where
\begin{equation}
\label{eq:G}
\bar{G}(x) = -c_1 m_1(x) + c_2 m_2(x) + c_3 h_{m_3}(x) + c_4 g_{m_4}(x),
\end{equation}
\begin{equation}
\label{eq:H}
\bar{H}(x) = c_3 g_{m_3}(x) + c_4 h_{m_4}(x),
\end{equation}
are both convex quartic polynomials on $\Omega$. Then \eqref{P_mvsk} model is formulated as a DC program:
\begin{equation}
\label{prob:DC}
\min\{ \bar{G}(x)-\bar{H}(x): x\in \Omega \}.\tag{DCP}
\end{equation}

Note that DC-SOS decomposition can be applied to the portfolio optimization involving any order of moments (even greater than $4$), and its DC decomposition can be derived in a similar way. The numerical performance for constructing DC-SOS decompositions for polynomials of large-scale with both sparse structure and dense structure are reported in \cite{niu2018}.

\subsubsection{DCA with DC-SOS decomposition for \eqref{P_mvsk} model}\label{sec:DCA}
DCA applied to DC-SOS decomposition is very similar to Algorithm \ref{alg:UDCA}. The only difference is in the line 3 where $x^{k+1}$ is computed by solving a \emph{convex quartic optimization problem}:
\begin{equation}
\label{prob:Pk}
x^{k+1}\in \argmin \{ \bar{G}(x) - \langle x, \nabla \bar{H}(x^k) \rangle: x\in \Omega \}.
\end{equation}
where $\bar{G}$ is a $4$th degree CSOS polynomial given in \eqref{eq:G}, and the gradient $\nabla \bar{H}(x^k) = c_3\nabla g_{m_3}(x^k) + c_4 \nabla h_{m_4}(x^k)$ are easily derived in Appendix \ref{appendix:B_m3} and \ref{appendix:C_m4}. 

Note that efficiently solving the convex quartic optimization problem over a standard simplex of type \eqref{prob:Pk} is a crucial question to the performance of DCA with DC-SOS decomposition. Fortunately, problem \eqref{prob:Pk}, as a smooth and convex optimization problem over a standard simplex, can be solved efficiently by many classical first- and second-order approaches such as gradient-type methods, interior point methods, and Newton-type methods etc. Accelerations (e.g., the Heavy ball-type and the Nesterov's acceleration) can be applied to these methods for better numerical performance \cite{2018Lectures}. As an example, FISTA (A fast iterative shrinkage-thresholding algorithm), a variant of the gradient descent method with Nesterov's acceleration, has $O(1/k^2)$ convergence rate, which is faster than the non-accelerated version ISTA with $O(1/k)$ convergence rate only \cite{FISTA2009A}. Moreover, there exist some efficient optimization packages such as \verb|KNITRO|, \verb|IPOPT|, \verb|FILTERSD| and MATLAB \verb|fmincon| for solving subproblem \eqref{prob:Pk}. In our numerical tests, we have tried all above solvers and chosen \verb|KNITRO| as the fastest solver among the others. Note again that
finding a suitable DC-SOS decomposition (e.g., with sparsity) for dense polynomials such that the convex subproblem \eqref{prob:Pk} can be solved more effectively is an important topic in our future work. 



%
%

The convergence analysis of DCA applied to the DC-SOS decomposition is exactly the same as in Theorem \ref{thm:convUDCA}. The only concern is the convergence of $\{x^k\}$ which requires the strongly convexity of $\bar{G}$ or $\bar{H}$.  This will not be a problem since a strong convex term $\frac{\rho}{2}\|x\|^2$ (for any $\rho >0$) can be introduced to both $\bar{G}(x)$ and $\bar{H}(x)$ to get a DC-SOS decomposition 
	\begin{equation}
	\label{eq:DCSOS-stronglyconvex}
	f(x) = \left(\bar{G}(x)+\frac{\rho}{2}\|x\|^2\right) - \left(\bar{H}(x) + \frac{\rho}{2}\|x\|^2\right)
	\end{equation}
with $\rho$-strong convexity in DC components. 

Note that there is always a trade off between the convexity of the DC components and the convergence of the sequence $\{x^k\}$ generated by DCA. Roughly speaking, a stronger convexity in the DC components gives more guarantee in the convergence of the sequence $\{x^k\}$, but leads to worse convex overestimations $f^k$ at $x^k$, and requires more iterations of DCA. For example, although computing a projection of a point on the standard simplex \eqref{prob:qcp} could be less expensive than solving the convex quartic polynomial optimization problem \eqref{prob:Pk}, however, DC-SOS decomposition may provide better convex overestimation of $f$, which can be determined by the number of iterations required by DCA from the same initial point. Later, large number of numerical tests in Section \ref{sec:Simu} will demonstrate that the proposed DC-SOS decomposition indeed requires less number of iterations for DCA, thus provides better DC decomposition for polynomials.

\section{DC acceleration technique}\label{sec:boosteddca}
In the case where the minimizer of the DC function $f=g-h$ lies in a flat region, then the convergence of DCA will be slow down. This is a common issue to the first-order methods. Particularly for DCA, when the iteration point $x^k$ comes into a flat region of $f$, the convex overestimation $f^k:x\mapsto g(x)- h(x^k) - \langle \nabla h(x^k),x-x^k \rangle$ of $f$ at $x^k$ may not be flat (since $g$ may not be flat) around $x^k$, in this case, $f^k$ will be a bad convex overestimation of $f$ around $x^k$, thus DCA will be slow down. 

To overcome this difficulty and improve the overall performance of DCA, we propose an accelerated technique for DC programming, namely  Boosted-DCA, which consists of introducing an inexact line search (e.g., of the Armijo-type) to get an improved iteration point. Fukushima-Mine introduced such a line search in a proximal point algorithm for minimizing the sum of continuously differentiable functions with a closed proper convex function \cite{fukushima1981,mine1981}. Then, Arag{\'{o}}n Artacho-Vuong et al. applied Armijo line search to accelerate DCA for unconstrained DC program with smooth $g$ and $h$ in \cite{artacho2018}, and with smooth $g$ and nonsmooth $h$ in \cite{artacho2020} (after the first version of our manuscript on Arxiv). In this section, we will focus on more general cases: DC program (both smooth and nonsmooth cases) within a closed convex set.

Consider the DC program over a convex set defined as:
\begin{equation}\label{prob:CCDC}
\min \{f(x):=g(x) - h(x) : x\in \mathcal{C}\},\tag{P}
\end{equation}
where $g$ and $h$ belong to $\Gamma_0(\R^n)$, and $\C$ is a nonempty closed convex set which is defined by a set of inequalities and equalities as $\mathcal{C}:= \{x\in \R^n: u(x)\leq 0, v(x)=0\}$ with 
$u:\R^n \to \R^p$ being convex and $v:\R^n\to \R^q$ being affine. 

\subsection{DC descent direction}\label{subsec:dcdd}
\begin{definition}[DC descent direction]
	Let $x^k$ be a feasible point of problem \eqref{prob:CCDC}, and $y^{k}$ be the next iteration point obtained by DCA from $x^k$. The vector $d^{k}=y^{k}-x^{k}$ is called a \emph{DC descent direction of $f$ at $y^{k}$ over $\mathcal{C}$} if $\exists \eta >0, \forall t\in (0,\eta), y^{k}+t d^k \in \mathcal{C}$ and $f(y^{k}+td^k) < f(y^{k})$.
\end{definition}

The name of DC decent direction comes from the fact that $d^k$ is constructed using two consecutive iteration points $x^k$ and $y^{k}$ of DCA. Note that if $x^k$ is not a critical point, then $d^k$ is always a feasible direction of $f$ at $x^{k}$ over $\mathcal{C}$, and it is a descent direction of $f$ at $x^k$ over $\C$ if $f(x^k)>f(y^{k})$ (in general, we only have $f(x^k)\geq f(y^{k})$). However, $d^k$ may neither be a feasible direction nor a descent direction of $f$ at $y^{k}$ over $\C$. 

Given a DC descent direction $d^k$, then we can proceed a line search at $y^{k}$ along the direction $d^k$ to accelerate the convergence of DCA. Next, we will discuss some properties of the DC descent direction for both differentiable and non-differentiable cases.

\subsubsection{Differentiable case : $g$, $h$ and $u$ are continuously differentiable}\label{subsubsec:diff}
We first consider the differentiable case where the functions $g$, $h$ and $u$ in \eqref{prob:CCDC} are all continuously differentiable, and some regularity conditions (e.g., the linearity constraint qualification, the Slater's condition, or the Mangasarian-Fromovitz constraint qualification) hold. Then we can prove the following two theorems whose proofs are given in Appendix \ref{appendix:E_thm:decdirect} and \ref{appendix:F_thm:decdirectbis}.
\begin{theorem}\label{thm:decdirect}
	Let $g, h$ and $u$ be continuously differentiable, $v$ be affine. Let $x^k$ and $y^{k}$ be two consecutive iteration points obtained by DCA for problem \eqref{prob:CCDC}, and $d^k:=y^{k}-x^{k}$. Then 
	$$\langle \nabla f(y^{k}), d^k \rangle \leq 0. $$
\end{theorem}

\begin{theorem}\label{thm:decdirectbis}
	Under the same assumptions as in Theorem \ref{thm:decdirect} and further suppose that $h$ is $\rho$-strongly convex ($\rho>0$). Then 
	$$\langle \nabla f(y^{k}),d^k \rangle \leq - \rho \|d^k\|^2.$$
\end{theorem}

Note that the regularity condition is in fact not necessary for proving Theorems \ref{thm:decdirect} and \ref{thm:decdirectbis}. A more general proof using normal cone without regularity condition is also provided later in the proof of Theorem \ref{thm:decdirect-nondiff} for non-differentiable cases, which is also available for the differentiable one.  

Proposition \ref{prop:SCforDCdescentdirection} provides two sufficient conditions for a DC descent direction, whose proof is described in Appendix \ref{appendix:prop:SCforDCdescentdirection}.
\begin{proposition}[Sufficient conditions for a DC descent direction]\label{prop:SCforDCdescentdirection} Under the same assumptions as in Theorem \ref{thm:decdirect}. Then $d^k$ is a DC descent direction of $f$ at $y^{k}$ over $\C$ if one of the following conditions holds:
	\begin{itemize}
		\item[(i)] $\langle \nabla f(y^{k}),d^k \rangle<0$ and $d^k$ is a feasible direction of $\C$ at $y^{k}$; 
		\item[(ii)] $h$ is $\rho$-strongly convex ($\rho>0$), $d^k\neq 0$ and $d^k$ is a feasible direction of $\C$ at $y^{k}$. 
	\end{itemize}
\end{proposition}

Based on Proposition \ref{prop:SCforDCdescentdirection}, one important question for checking DC descent direction is to know that $d^k$ is a feasible direction of $\C$ at $y^{k}$. Let $A(x)$ denote the \emph{active set} at a point $x\in \C$, i.e., $u_i(x)=0,\forall i\in A(x)$, then the next Proposition \ref{prop:feasdirect} provides a necessary condition for a feasible direction, whose proof is described in Appendix \ref{appendix:G_prop:feasdirect}.

\begin{proposition}[Necessary condition for a feasible direction]\label{prop:feasdirect}
	Under the same assumptions as in Theorem \ref{thm:decdirect}. Then $d^k$ is a feasible direction of $\C$ at $y^{k}$ implies that 
	\begin{equation}
		\label{eq:NC}
		A(y^{k}) \subset A(x^{k}).
	\end{equation}
\end{proposition}

Condition \eqref{eq:NC} is in general not a sufficient for a feasible direction. For instance, consider the constraint $\{x\in \R^2: u_1(x) =\|x\|^2-1\leq 0, u_2(x) = \|x-1\|^2-1\leq 0 \}$, if we take $x^k=(0.5,\sqrt{3}/2)$ and $y^{k}=(0,0)$, then clearly $A(y^{k})=\{2\}\subset \{1,2\} = A(x^k)$, but the vector $d^k=y^{k}-x^{k}$ is not a feasible direction at $y^{k}$. Particularly, this condition can be sufficient if $u$ is affine. The next theorem describes a necessary and sufficient condition whose proof is given in Appendix \ref{appendix:H_thm:necsuffcondfeasdirect}. 
\begin{theorem}[Necessary and sufficient condition for a feasible direction]\label{thm:necsuffcondfeasdirect}
	Under the same assumptions as in Theorem \ref{thm:decdirect} and suppose that $u$ is affine. Then $A(y^{k}) \subset A(x^{k})$ is a necessary and sufficient condition for $d^k$ being a feasible direction of $\C$ at $y^{k}$.
\end{theorem}

Note that in \eqref{P_mvsk}, $\Omega=\{x: v(x) = e\tran x=1, u(x) =x\geq 0 \}$, the function $u$ is linear, so that based on Theorems \ref{thm:necsuffcondfeasdirect}, $A(y^{k})\subset A(x^k)$ is a necessary and sufficient condition for the feasibility of the direction $d^k$ at $y^{k}$ over $\C$.  

\subsubsection{Non-differentiable case: $h$ and $u$ are non-differentiable}\label{subsubsec:nondiff}
An interesting question is to generalize Theorems \ref{thm:decdirect}, \ref{thm:decdirectbis} and Proposition \ref{prop:feasdirect} for non-differentiable case without regularity conditions. Fortunately, we have similar results as described in the next Theorem if the function $g$ is differentiable, $h$ and $u$ are non-differentiable, and the regularity conditions are not required. 

\begin{theorem}
	\label{thm:decdirect-nondiff}
	Let $g$ be differentiable, $v$ be affine, $x^k$ and $y^{k}$ be two consecutive iteration points obtained by DCA for problem \eqref{prob:CCDC}, and $d^k=y^{k}-x^{k}$.
	\begin{itemize}
		\item[$\bullet$] If $h$ is non-differentiable convex, then $f'(y^{k}; d^k) \leq 0.$
		\item[$\bullet$] If $h$ is non-differentiable $\rho$-strongly convex ($\rho >0$), then $\exists \rho>0$ such that 
		$f'(y^{k};d^k) \leq - \rho \|d^k\|^2.$
		\item[$\bullet$] If $u$ is non-differentiable convex and $d^k$ is a feasible direction of $\C$ at $y^{k}$, then $A(y^{k}) \subset A(x^{k}).$		
	\end{itemize}
\end{theorem}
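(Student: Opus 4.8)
The plan is to reduce all three assertions to two ingredients that survive the loss of differentiability of $h$ and $u$ and need no constraint qualification: (i) the variational-inequality form of optimality for the DCA convex subproblem, which is available because that subproblem has a \emph{smooth} convex objective over a convex set; and (ii) the (strong) monotonicity of $\partial h$, used together with the identity $f'(x;d)=\langle\nabla g(x),d\rangle-h'(x;d)$ (valid since $g$ is differentiable and $-h$ concave) and the elementary fact that $\langle\xi,d\rangle\le h'(x;d)$ for every $\xi\in\partial h(x)$. First I would record that, writing $y^k\in\partial h(x^k)$ for the subgradient used by DCA, the point $x^{k+1}$ minimizes the convex function $x\mapsto g(x)-\langle x,y^k\rangle$ over $\C$; hence $\langle\nabla g(x^{k+1})-y^k,\,x-x^{k+1}\rangle\ge 0$ for all $x\in\C$, and inserting the feasible point $x=x^k$ gives $\langle\nabla g(x^{k+1}),d\rangle\le\langle y^k,d\rangle$.

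For the first two bullets I would then pick any $y^{k+1}\in\partial h(x^{k+1})$ and chain
\[ \langle\nabla g(x^{k+1}),d\rangle \;\le\; \langle y^k,d\rangle \;\le\; \langle y^{k+1},d\rangle \;\le\; h'(x^{k+1};d), \]
where the middle step is monotonicity of $\partial h$ ($\langle y^k-y^{k+1},x^k-x^{k+1}\rangle\ge 0$) and the last step is $y^{k+1}\in\partial h(x^{k+1})$. Subtracting $h'(x^{k+1};d)$ yields $f'(x^{k+1};d)\le 0$. If $h$ is $\rho$-strongly convex then $\partial h$ is $\rho$-strongly monotone, so the middle inequality improves to $\langle y^k,d\rangle\le\langle y^{k+1},d\rangle-\rho\|d\|^2$ and the conclusion sharpens to $f'(x^{k+1};d)\le-\rho\|d\|^2$. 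This is precisely the nonsmooth reading of the ``$(I)-(II)$'' split from Theorems~\ref{thm:decdirect} and \ref{thm:decdirectbis}: ``$(I)$'' becomes subgradient monotonicity, and ``$(II)$'' is absorbed into the variational inequality instead of being written through KKT multipliers of $u$ and $v$, which is exactly what removes the need for differentiability of the $u_i$, $v_j$ and for a constraint qualification.

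For the third bullet I would replay the proof of Theorem~\ref{thm:feasdirect} with directional derivatives in place of gradients. Fix $i\in A(x^{k+1})$. Since $d$ is a feasible direction at $x^{k+1}$, $u_i(x^{k+1}+td)\le 0=u_i(x^{k+1})$ for all small $t>0$, so letting $t\downarrow 0$ gives $u_i'(x^{k+1};d)\le 0$; sublinearity of the directional derivative then gives $u_i'(x^{k+1};-d)\ge -u_i'(x^{k+1};d)\ge 0$; and the convexity inequality $u_i(x^k)\ge u_i(x^{k+1})+u_i'(x^{k+1};x^k-x^{k+1})=u_i'(x^{k+1};-d)\ge 0$ together with feasibility of $x^k$ (so $u_i(x^k)\le 0$) forces $u_i(x^k)=0$, i.e.\ $i\in A(x^k)$. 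Hence $A(x^{k+1})\subset A(x^k)$.

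The main obstacle is not a new idea but keeping the nonsmooth bookkeeping honest in three spots: that the DCA-subproblem optimality condition is genuinely available with no CQ (it is, being a smooth convex objective over a convex set, so the variational inequality holds outright); that one must use \emph{consistent} subgradients $y^k,y^{k+1}$ together with the support-function identity $h'(x^{k+1};d)=\max_{\xi\in\partial h(x^{k+1})}\langle\xi,d\rangle$ so that the inequality $\langle y^{k+1},d\rangle\le h'(x^{k+1};d)$ points the right way; and that, since directional derivatives are not odd in the direction, the step $u_i'(x^{k+1};-d)\ge -u_i'(x^{k+1};d)$ must be obtained from sublinearity rather than written as an equality. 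Once these are in place, everything collapses to the same three inequalities as in the smooth case.
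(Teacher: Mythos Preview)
Your proposal is correct and follows essentially the same route as the paper's proof: the paper uses the normal-cone optimality condition $u-\nabla g(x^{k+1})\in N_{\C}(x^{k+1})$ (your variational inequality) together with $\langle v,d\rangle\le h'(x^{k+1};d)$ and (strong) monotonicity of $\partial h$ to bound $f'(x^{k+1};d)\le\langle u-v,d\rangle$, and for the third bullet simply replaces $\langle\nabla u_i(x^{k+1}),d\rangle$ by $u_i'(x^{k+1};d)$ in the proof of Theorem~\ref{thm:feasdirect}. Your treatment of the third bullet is in fact a bit more careful than the paper's, since you make the sublinearity step $u_i'(x^{k+1};-d)\ge -u_i'(x^{k+1};d)$ explicit rather than leaving it implicit in the ``replace the gradient by the directional derivative'' remark.
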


A proof of Theorem \ref{thm:decdirect-nondiff} is quite different from the differentiable one, which is described in Appendix \ref{appendix:I_thm:decdirect-nondiff}. Note that if $u$ is differentiable and affine, then $A(y^{k}) \subset A(x^{k})$ is again a necessary and sufficient condition for a feasible direction $d^k$ at $y^{k}$.

Note that the problem \eqref{P_mvsk} has polynomial functions only, so Theorem \ref{thm:decdirect-nondiff} is not addressed to this application. But, we still present Theorem \ref{thm:decdirect-nondiff} to the interested readers for potential uses in non-differentiable applications. 

\subsection{Armijo-type line search}\label{subsec:Armijo}
Suppose that $d^k=y^{k}-x^{k}$ is a DC descent direction for problem \eqref{prob:CCDC}, we are going to find a suitable stepsize $\alpha > 0$ moving from $y^{k}$ to $x^{k+1}$ along the direction $d^k$ as
\begin{equation}
\label{eq:linesearch}
x^{k+1} = y^{k} + \alpha d^k,
\end{equation}
verifying $f(x^{k+1})< f(y^{k})$ and $x^{k+1}\in \mathcal{C}$. The exact line search finds the best $\alpha$ by solving the one-dimensional minimization problem:
$$\min \{f(y^{k} + \alpha d^k) : \alpha > 0,  y^{k} + \alpha d^k\in \mathcal{C} \}$$
using classical line search methods such as Fibonacci and golden section search, and line search methods based on curve fitting etc. However, the exact line search is often cumbersome. As a matter of fact, for handling large-scale cases, it is often desirable to sacrifice accuracy in the line search in order to conserve overall computation time. Therefore, we are more interested in inexact line search, e.g., Armijo-type line search, in which we won't find the best $\alpha$, but try to find an available $\alpha$ satisfying $f(x^{k+1})< f(y^{k})$ and $x^{k+1}\in \Omega$. 

The Armijo's rule (see e.g., \cite{bertsekas1999}) suggests to find a suitable $\alpha>0$ verifying
\begin{equation}
	\label{eq:armijorule-org}
	f(y^{k}) - f(x^{k+1}) \geq  - \sigma\alpha f'(y^{k};d^k).
\end{equation}
We start from an initial trial stepsize $\alpha>0$ (neither too large nor too small, e.g., $\alpha=1$). Then, taking $\sigma\in (0,1)$ and $\beta\in (0,1)$, where $\beta$ denotes the \emph{reduction factor} (or \emph{decay factor}) to reduce the stepsize $\alpha$ to $\beta \alpha,$ and $\sigma$ is chosen to be closed to zero, e.g., $\beta\in [0.1,0.5]$ and $\sigma\in [10^{-5},0.1]$. 

Note that \eqref{eq:armijorule-org} is applicable if and only if $f'(y^{k};d^k)$ can be easily computed. For example, when $f$ is differentiable at $y^k$, then $f'(y^{k};d^k) = \langle \nabla f(y^k), d^k \rangle$. Otherwise, for $\rho$-strongly convex function $h$ ($\rho>0$), if $\alpha$ is reduced smaller than $\rho$, we get from the Armijo's rule \eqref{eq:armijorule-org} and Theorem \ref{thm:decdirectbis} (differentiable cases) or Theorem \ref{thm:decdirect-nondiff} (non-differentiable cases) that  
$$f(y^{k}) - f(x^{k+1}) \geq  - \sigma\alpha f'(y^{k};d^k) \geq \sigma \alpha \rho \|d^k\|^2\geq \sigma \alpha^2 \|d^k\|^2 > 0.$$
Therefore, we can stop reducing $\alpha$ when $x^{k+1}\in \C$ and verifying  
\begin{equation}
\label{eq:armijo}
f(x^{k+1}) \leq  f(y^{k}) - \sigma\alpha^2 \|d^k\|^2.
\end{equation}

Note that the choice of the parameters $\beta$ and $\sigma$ depends on the confidence we have on the initial stepsize $\alpha$, which should be neither too large nor too small. If $\alpha$ is too large, then we may need a fast reduction in $\alpha$, so that $\beta$ and $\sigma$ should be chosen small; If $\alpha$ is too small, e.g., $\alpha \leq \varepsilon/\|d^k\|$ for tolerance $\varepsilon>0$, then we get from \eqref{eq:linesearch} that $$\|x^{k+1}-y^{k}\| = \|\alpha d^k\| \leq \varepsilon.$$
In this case, there is no need to continue the line search and we will set $x^{k+1}=y^{k}$. The proposed Armijo-type line search is summarized as follows:

\begin{proc}{ht!}{Procedure}
	\caption{Armijo line search}
	\label{alg:Armijo}
	\begin{algorithmic}[1]
		\REQUIRE descent direction $d^k=y^{k}-x^k$; point $y^{k}$; reduction factor $\beta\in (0,1)$ (e.g., $\beta=0.3$); initial stepsize $\alpha>0$ (e.g., $\alpha=1$); parameter $\sigma\in (0,1)$ (e.g., $\sigma=10^{-3}$); tolerance for line search $\varepsilon>0$.
		\ENSURE potentially improved candidate $x^{k+1}$.
		
		\WHILE{$\alpha > \varepsilon/\|d^k\|$}		
		\STATE $x^{k+1} \leftarrow y^{k} + \alpha d^k$;
		\STATE $\Delta \leftarrow f(y^{k}) - f(x^{k+1}) - \sigma \alpha^2 \|d^k\|^2$;
		\IF{$\Delta \geq 0$ and $x^{k+1}\in \C$}
		\RETURN $x^{k+1}$;
		\ENDIF
		\STATE $\alpha \leftarrow \beta \alpha$;
		\ENDWHILE
		\STATE $x^{k+1}\leftarrow y^{k}$; 
		\RETURN $x^{k+1}$.
	\end{algorithmic}
\end{proc}

\subsection{Boosted-DCA}\label{subsec:Boosted-DCA}
Combining the DCA with Armijo line search along DC descent direction, we propose the Boosted-DCA for problem \eqref{prob:CCDC} described in Algorithm \ref{alg:BDCAforP}. 
\begin{algorithm}[ht!]
	\caption{Boosted-DCA for problem \eqref{prob:CCDC}}
	\label{alg:BDCAforP}
	\begin{algorithmic}[1]
		\REQUIRE initial point $x^0\in \C$; tolerance for optimal value $\varepsilon_1 >0$; tolerance for optimal solution $\varepsilon_2>0$;
		\ENSURE computed solution $x^*$;
		
		\STATE $k\leftarrow 0$; $\Delta f\leftarrow \infty; \Delta x\leftarrow \infty$;
		\WHILE{$\Delta f > \varepsilon_{1}$ or $\Delta x > \varepsilon_{2}$}
		\STATE $z^k \in \partial h(x^k)$;
		\STATE $y^{k}\in \argmin\{g(x)-\langle x, z^k \rangle :x\in \C\}$;
		\STATE $d^k\leftarrow y^{k}-x^{k}$;
		\IF{$A(y^{k})\subset A(x^{k})$ and $f'(y^{k};d^k) < 0$}
		\STATE compute $x^{k+1}$ using the Armijo line search from $y^{k}$;
		\ELSE
		\STATE $x^{k+1}\leftarrow y^k$;
		\ENDIF
		\STATE $f^*\leftarrow f(x^{k+1})$; $x^*\leftarrow x^{k+1}$;
		\STATE $\Delta f \leftarrow |f^* - f(x^{k})|/(1+|f^*|)$; $\Delta x \leftarrow \|x^*-x^{k}\|/(1+\|x^*\|)$;
		\STATE $k \leftarrow k+1$;
		\ENDWHILE
		\RETURN $x^*$;
	\end{algorithmic}
\end{algorithm}

Theorem \ref{thm:convofBDCA} is the main result in this section for the convergence analysis of the Boosted-DCA whose proof is established in Appendix \ref{appendix:J_thm:convofBDCA}.
\begin{theorem}\label{thm:convofBDCA}
	Let $\{(x^k,y^k,z^k)\}$ be the sequence generated by Boosted-DCA Algorithm \ref{alg:BDCAforP} for problem \eqref{prob:CCDC} from an initial point $x^0\in \dom \partial h$. Let $g$ be convex over $\C$ with modulus $\rho_g\geq 0$ and $h$ be convex over $\C$ with modulus $\rho_h\geq 0$. Suppose that either $g$ and $h$ is strongly convex over $\C$ (i.e., $\rho_g + \rho_h>0$), the sequence $\{(x^k,y^k,z^k)\}$ is bounded and $f$ is bounded from below over $\C$. Then 
	\begin{enumerate}
		\item[(i)] (sufficiently descent property) for every $k=1,2,\ldots$, we have \begin{equation}
			f(x^k)-f(y^{k}) \geq \frac{\rho_g + \rho_h}{2}\|x^{k}-y^{k}\|^2.
		\end{equation} 
		\item[(ii)] (convergence of $\{f(x^k)\}$) the sequence $\{f(x^k)\}_{k\geq 1}$ is non-increasing and convergent.
		\item[(iii)] (convergence of $\{\|x^k-y^{k}\|\}$ and  $\{\|x^k-x^{k+1}\|\}$) $\|x^k-y^{k}\|\xrightarrow{k\to \infty} 0$ and $\|x^k-x^{k+1}\|\xrightarrow{k\to \infty} 0$.
		\item[(iv)] (square summable property) $\sum_{k\geq 0} \|y^{k}-x^{k}\|^2 < \infty$ and $\sum_{k\geq 0} \|x^{k+1}-x^{k}\|^2 < \infty$.
		\item[(v)] (subsequential convergence of $\{x^k\}$) any limit point of the sequence $\{x^k\}$ is a DC critical point of 		\eqref{prob:CCDC}. Moreover, if $h$ is continuously differentiable, then any limit point of the sequence $\{x^k\}$ is a strongly DC critical point of \eqref{prob:CCDC}.\\
		Let $\Phi(x):=f(x)+\chi_{\mathcal{C}}(x)$ and suppose that $\Phi$ verifies the {\L}ojasiewicz subgradient inequality, and $h$ has locally Lipschitz continuous gradient over $\C$. Then  \item[(vi)] (summable property) $\sum_{k\geq 0} \|y^{k}-x^{k}\| < \infty$ and $\sum_{k\geq 0} \|x^{k+1}-x^{k}\| < \infty$.
		\item[(vii)] (convergence of $\{x^k\}$) the sequence $\{x^k\}$ converges to a strongly DC critical point of \eqref{prob:CCDC}.
	\end{enumerate}
\end{theorem}

\begin{figure}[ht!]
	\centering
	\includegraphics[width=0.7\linewidth]{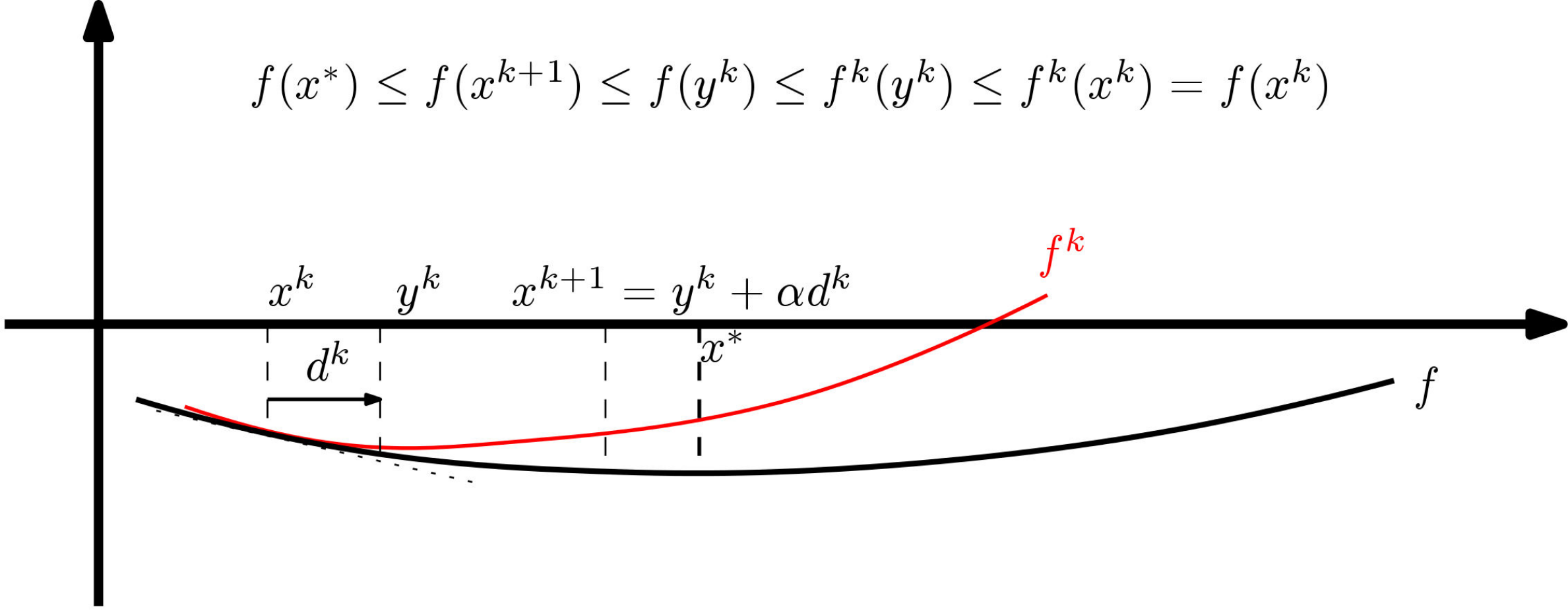}
	\caption{How BDCA accelerates the convergence of DCA.}
	\label{fig:BDCA}
\end{figure}
Figure \ref{fig:BDCA} illustrates how Boosted-DCA accelerates the convergence of DCA. The classical DCA starting from $x^k$ performs as minimizing a convex overestimation of $f$ at $x^k$, i.e., the convex function $f^k$, to get its minimum at $y^{k}$; while the Boosted-DCA starting from $x^k$ will proceed an Armijo-type line search at $y^{k}$ along the DC descent direction $d^k=y^{k} - x^{k}$. This combination could lead to a better candidate $x^{k+1}$ verifying that $f(x^{k+1}) \leq f(y^{k})$, and thus accelerating the iteration points approaching a local minimum. Note that, this acceleration could be particularly efficient when $x^k$ is located in a flat region and $f^k$ is not a good local approximation of $f$ around $x^k$.  

On the other hand, it is worth noting that DCA and Boosted-DCA with the same initial point may converge to different solution as illustrated in Figure \ref{fig:BDCAfindbettersolution}. DCA starting from $x^k$ tends to the nearest local minimum $x^*$. While, Boosted-DCA starting from $x^k$, if $\alpha$ is well chosen as illustrated, leads to a better solution $\bar{x}^*$. So that the line search could increase the potential of avoiding DCA trapped into the nearest undesirable local minima.
\begin{figure}[ht!]
	\centering
	\includegraphics[width=0.7\linewidth]{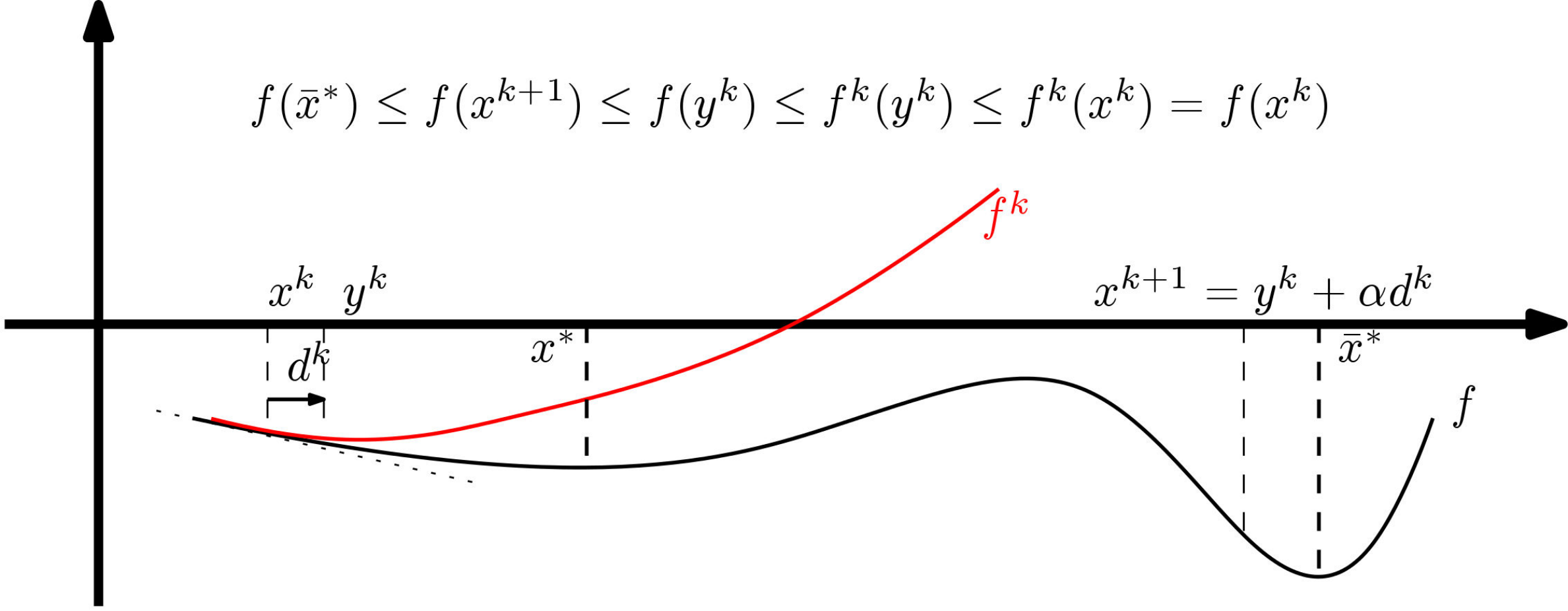}
	\caption{Boosted-DCA and DCA may converge to different local minima.}
	\label{fig:BDCAfindbettersolution}
\end{figure}



\section{Boosted-DCA for \eqref{P_mvsk} model}\label{sec:UDCA&UBDCA}
\subsection{Boosted-DCA with projective DC decomposition}\label{subsec:UBDCA}
For \eqref{P_mvsk} model, the distance between any two points in $\Omega$ is smaller than $\sqrt{2}$, so that we choose the initial $\alpha =\frac{\sqrt{2}}{\|d^k\|}$, which is large enough since $\alpha =\|x^{k+1}-y^{k}\| / \|d^k\| \leq  \frac{\sqrt{2}}{\|d^k\|}$.   Moreover, all functions $f$, $G$ and $H$ are polynomials (i.e., continuously differentiable and analytic) and $G$ is $\eta$-strongly convex with $\eta>0$ computed by \eqref{eq:estimateeta}. So that all convergence results in Theorem \ref{thm:convofBDCA} are valid. The detailed Boosted-DCA applied to problem \eqref{P_mvsk} with projective DC decomposition is described in Algorithm \ref{alg:BDCA_univ}, namely UBDCA.

\begin{algorithm}[ht!]
	\caption{UBDCA for problem \eqref{P_mvsk}}
	\label{alg:BDCA_univ}
	\begin{algorithmic}[1]
		\REQUIRE initial point $x^0\in \C$; tolerance for optimal value $\epsilon_1 >0$; tolerance for optimal solution $\epsilon_2>0$;
		\ENSURE computed solution $x^*$;
		
		\STATE $k\leftarrow 0$; $\Delta f\leftarrow \infty; \Delta x\leftarrow \infty$;
		\WHILE{$\Delta f > \epsilon_{1}$ or $\Delta x > \epsilon_{2}$}
		
		\STATE \textbf{solve the convex quadratic program \eqref{prob:qcp} to obtain an optimal solution $y^k$;}
		\STATE $d^k\leftarrow y^{k}-x^{k}$;
		\IF{$A(y^{k})\subset A(x^{k})$ and $\langle \nabla f(y^{k}),d^k \rangle < 0$}
		\STATE update $x^{k+1}$ by the Armijo line search with initial $\alpha=\frac{\sqrt{2}}{\|d^k\|}$ from $y^{k}$;
		\ELSE
		\STATE $x^{k+1}\leftarrow y^{k}$;
		\ENDIF
		\STATE $f^*\leftarrow f(x^{k+1})$; $x^*\leftarrow x^{k+1}$;
		\STATE $\Delta f \leftarrow |f^* - f(x^{k})|/(1+|f^*|)$; $\Delta x \leftarrow \|x^*-x^{k}\|/(1+\|x^*\|)$;
		\STATE $k \leftarrow k+1$;
		\ENDWHILE
	\end{algorithmic}
\end{algorithm}

\subsection{Boosted-DCA with DC-SOS decomposition}\label{sec:BDCA}
Similarly, we denote BDCA for the Boosted-DCA with DC-SOS decomposition. The only difference between BDCA and UBDCA is in the line 3, where the convex quadratic optimization \eqref{prob:qcp} in UBDCA is replaced by the convex quartic polynomial optimization \eqref{prob:Pk}. Again, by choosing any $\rho>0$, the DC-SOS decomposition given in  \eqref{eq:DCSOS-stronglyconvex} has strongly convex polynomial DC components. Hence, all convergence results in Theorem \ref{thm:convofBDCA} hold.

\section{Numerical simulation}\label{sec:Simu}
\paragraph{Experimental setup}
The numerical experiments are performed on a Dell Workstation equipped with $4$ Intel i$7$-$6820$HQ (8 cores), $2.70$GHz CPU and $32$ GB RAM. Our DC algorithms are implemented on MATLAB R2019a, namely \verb|MVSKOPT|, based on a \emph{DC optimization toolbox} (namely \verb|DCAM|) and a \emph{multivariate polynomial matrix modeling toolbox} (namely \verb|POLYLAB|). The \verb|DCAM| toolbox provides three main classes: DC function class (\verb|dcfunc|), DC programming problem class (\verb|dcp|), and DCA class (\verb|dca|), which can be used to model and solve a DC programming problem within few lines of codes. This toolkit is released as an open-source code under the license of MIT on Github \cite{DCAM}. The \verb|POLYLAB| toolbox is also developed by us to build efficiently multivariate polynomials, whose code is released as well on Github at \cite{Polylab}. We kindly welcome researchers for extensive tests of our codes in your applications, and we appreciate a lot of your feedbacks and contributions.

\paragraph{Data description}
Two datasets are tested in our experiments. 
\begin{enumerate}
	\item[$\bullet$]\textbf{Synthetic datasets}: the dataset is randomly generated by taking the number of assets $n$ in $\{4,6,\ldots, 20\}$ and the number of periods $T$ as $30$. For each $n$, we generate $3$ models in which the investor's preference $c$ is randomly chosen with $c\geq 0$. The returns rates $R_{i,t}$ are taken in $[-0.1, 0.4]$ for all $i\in \{1,\ldots,n\}, t\in \{1,\ldots,T\}$ using MATLAB function \verb|rand|. This dataset is used to test the performance of DCA and BDCA. 
	\item[$\bullet$]\textbf{Real datasets}: we take the weekly real return rates of $1151$ assets in Shanghai A shares ranged from January 2018 to December 2018 ($51$ weeks) downloaded from CSMAR \url{http://www.gtarsc.com/} database. These data are used to analyze the optimal portfolios and plot efficient frontier on real stock market. 
\end{enumerate} 

\paragraph{High-order moment computation}
The input tensors of four moments (mean, covariance, co-skewness and co-kurtosis) are computed using the formulations \eqref{eq:mu}, \eqref{eq:Sigma}, \eqref{eq:S} and \eqref{eq:K}. The ``curse of dimensionality" is a crucial problem to construct MVSK models. Three important issues and the proposed improvements are needed to be noted:

\begin{enumerate}
	\item[$\bullet$] \textit{Tensor sparsity issue}: We have explained in our previous work \cite{niu2011} that the moments and co-moments are often non-zeros which yields a dense nonconvex quartic polynomial optimization problem for MVSK model. Therefore, the number of monomials increases as fast as the order $O(\frac{n^4}{4!})$, since $\dim \R_4[x] = \binom{n+4}{4}$. This inherent difficulty makes it very time-consuming to generate dense high-order multivariate polynomials in MATLAB. Figure \ref{fig:probsizevsconstructtime} shows the performances of different modeling tools (including \verb|POLYLAB|\cite{Polylab}, \verb|YALMIP|\cite{yalmip}, \verb|SOSTOOLS|\cite{SOSTOOLS}, MATLAB Symbolic Math Toolbox using \verb|sym|, and MATLAB Optimization Toolbox using \verb|optimvar|) for constructing MVSK models. Clearly, \verb|POLYLAB| is much more fastest than the others, which is the reason why we use \verb|POLYLAB| to model polynomial optimization problems. Anyway, regarding to Figures \ref{subfig:3a} and \ref{subfig:3b}, the modeling time of \verb|POLYLAB| is still growing very quickly. Based on this observation, we can predict with a fourth order polynomial interpolation that the generating time of an MVSK model with $50$ variables could take about $1.59$ hours for \verb|POLYLAB|, $2.68$ hours for \verb|SOSTOOLS|, $4.72$ hours for \verb|Sym|, $7.48$ hours for \verb|YALMIP|, and $12.19$ hours for \verb|optimvar|. So the sparsity issue is one of the most important problems to limit the size of the MVSK model in practice. That is also the reason to develop \verb|POLYLAB| as a by-product in this project.
	\begin{figure}[ht!]
		\centering
		\subfigure[Number of assets v.s. cpu time]{
			\begin{minipage}{0.46\textwidth}\label{subfig:3a}
				\includegraphics[width=\textwidth]{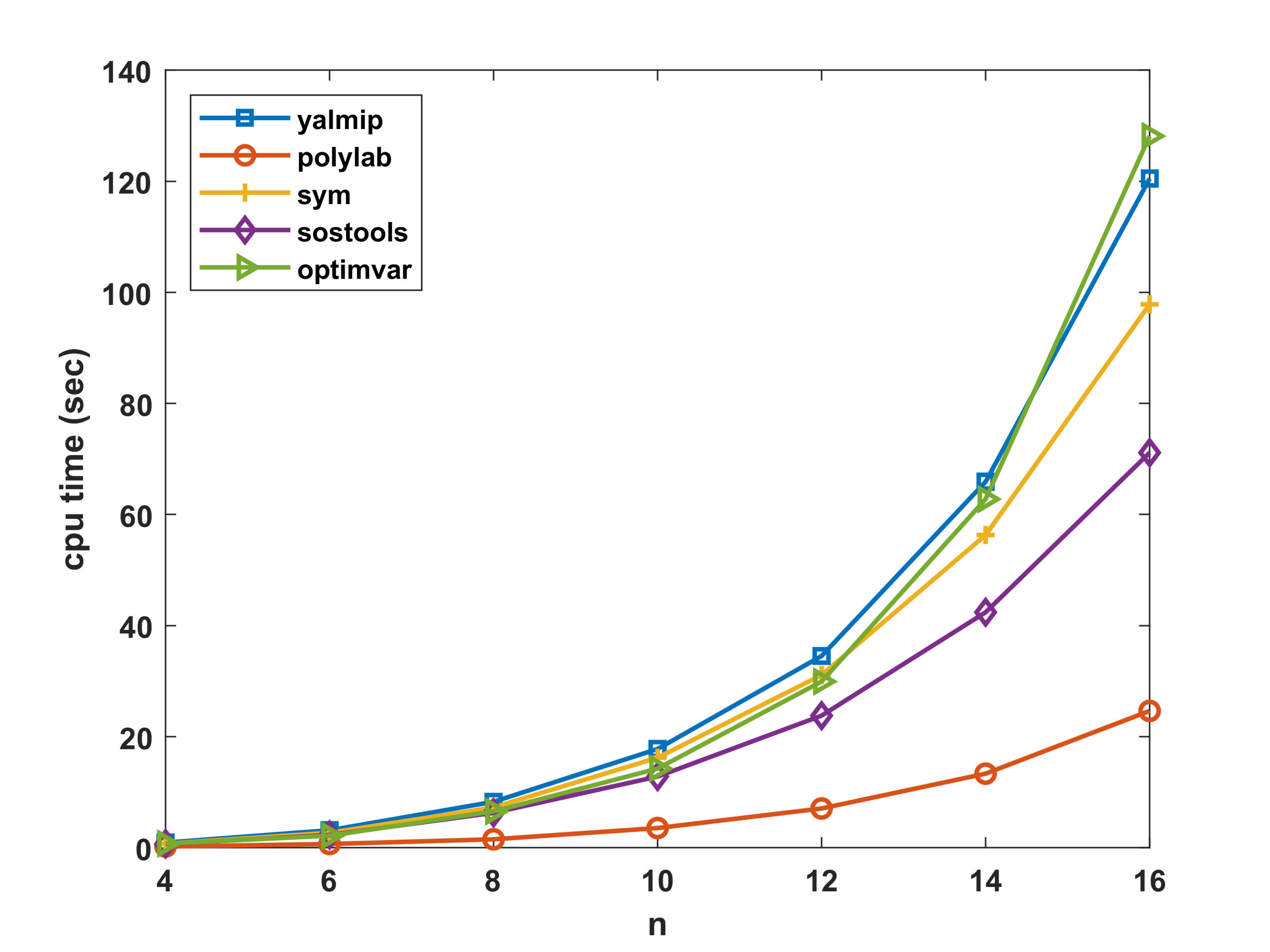}
			\end{minipage}
		}
		\subfigure[Number of assets v.s. log cpu time]{
			\begin{minipage}{0.46\textwidth}\label{subfig:3b}
				\includegraphics[width=\textwidth]{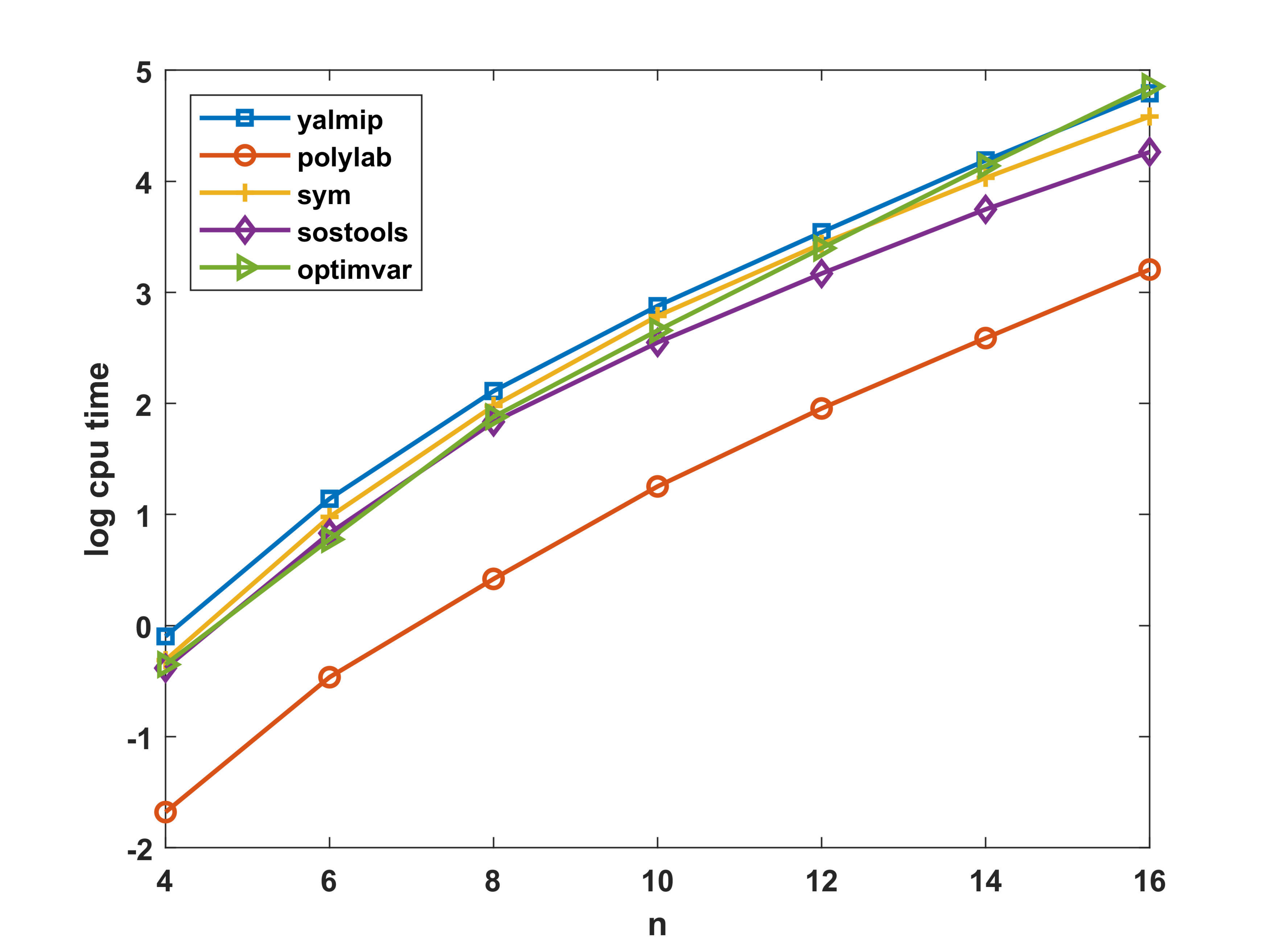}
			\end{minipage}
		}
		\caption{Performance of MVSK modelings using different modeling tools on MATLAB.}
		\label{fig:probsizevsconstructtime}
	\end{figure} 
	\item[$\bullet$] \emph{Computer memory issue}: Based on the symmetry of the moment tensors, it is unnecessary to allocate computer memories for saving all high-order moments and co-moments. E.g., due to the size limitation of the allowed MATLAB array, the construction of an $n^4$ co-kurtosis tensor with $n=300$ yields approximately $60.3$GB memories, that is intractable in our $32$GB RAM testing device. In our previous work \cite{niu2011}, we have tried using the Kronecker product and MATLAB \verb|mex| programming technique to compute a co-skewness (resp. co-kurtosis) tensor as an $n\times n^2$ (resp. $n\times n^3$) sparse matrix by retaining only the independent elements. Even though, saving huge amounts of moments data in memory is still very space-consuming. To overcome this difficulty, we propose computing the moments and co-moments entries Just-In-Time (namely, JIT technique) when they are needed, and without saving them in memory at all. Moreover, these moments and co-moments need only to be computed once when constructing the polynomial objective function, and the resulted polynomial has at most $\binom{n+4}{4}$ monomials which does not require large mount of memories. JIT technology is particularly useful to overcome the bottleneck of computer memory issue and improves a lot the numerical performance in very large-scale simulations.
	\item[$\bullet$] \emph{Gradient computing issue}: Concerning the gradient computation required in DCA and any first- and second-order method, it is computational expensive for exact gradient when $n$ is large, although $\nabla f(x)=-c_1 \mu + 2 c_2 \Sigma x - 3c_3 \hat{S} (x\otimes x) + 4c_3 \hat{K} (x\otimes x \otimes x)$ can be derived explicitly, see e.g. \cite{de2003incorporating}. Surprisingly, for a given point $x\in \R^n$, the computational complexity for $\nabla f(x)$ is even lower than $f(x)=-c_1 x^{\top}\mu + c_2 x^{\top}\Sigma x - c_3 x^{\top}\hat{S} (x\otimes x) + c_3 x^{\top}\hat{K} (x\otimes x \otimes x)$. Therefore, it is not hopeful to benefit from the numerical gradient such as $\partial_i f(x) \approx (f(x + \delta e_i) - f(x - \delta e_i))/2\delta$ with small $\delta>0$ for improving gradient evaluations.
\end{enumerate}

\subsection{Numerical tests with syntactic datastes}
\subsubsection{Numerical performance of different DC algorithms}\label{subsec:perfofdcalgos}
In this subsection, we will present the performance of our proposed four DC algorithms for MVSK model, namely UDCA (with commonly used DC decomposition); DCA (with DC-SOS decomposition), UBDCA and BDCA (Boosted UDCA and DCA). The polynomial convex optimization sub-problems required in DCA and BDCA are solved by \verb|KNITRO| \cite{knitro} (an implementation of an interior-point-method for convex optimization), which seems to be the fastest solver comparing with MATLAB \cite{MATLAB} \verb|fmincon|, IPOPT \cite{ipopt} and CVX \cite{cvx}; while the quadratic convex optimization sub-problems required in UDCA and UBDCA are solved by a strongly polynomial-time algorithm BPPPA, which can be easily implemented on MATLAB (see e.g., \cite{niu2011} for BPPPA). 

We test on the synthetic datasets with $n\in \{4,6,\ldots,20\}$ and $T=30$. For each $n$, we generate three problems with different investor's preferences ($c=(10,1,10,1)$ for risk-seeking, $c=(1,10,1,10)$ for risk-aversing, and $c=(10,10,10,10)$ for risk neutral). The initial point $x^0$ for DCA and BDCA is randomly generated in $\{0,1\}^n$. Note that based on our previous work \cite{niu2011}, good initial points for DCA can be estimated by solving the mean-variance model $\min\{-c_1m_1(x) + c_2 m_2(x): x\in \Omega\}$ since the third and fourth moments are extremely small compared with first and second moments, so that the solution of the mean-variance model is already very close to the final solution. Another good initialization (particularly useful for real market data) is to find a mean-variance portfolio closed to the naive $1/n$ portfolio by solving the convex quadratic optimization problem $\min\{-c_1m_1(x) + c_2 m_2(x) + \tau\|x - 1/n\|_2^2: x\in \Omega\}$ where $\tau>0$ is a weighting parameter. In this paper, we are not interested in the influence of initialization for DCA, but more focusing on the performance of difference DCA with same random initial point. The tolerances $\varepsilon_{1}= 10^{-6}$ and $\varepsilon_{2} = 10^{-4}$. In Armijo line search, the initial stepsize $\alpha = \frac{\sqrt{2}}{\|d\|}$, the reduction factor $\beta=0.5$, the parameter $\sigma = 10^{-3}$, and the stopping tolerance for line search $\varepsilon=10^{-8}$.  

Table \ref{tab:performcomp} summarizes some details of our tested synthetic models and their numerical results obtained by different DC algorithms (DCA, BDCA, UDCA and UBDCA). Some labels are explained as follows: Labels for MVSK models include the number of assets (n), the number of period (T), and the number of monomials (monos); Labels for numerical results of DC algorithms include the number of iterations (iter), the solution time (time), and the objective value (obj). We plot their numerical results in Figure \ref{fig:performcomp_time} in which the horizontal axis is the number of assets $n$, the vertical axis in Figure \ref{subfig:4a} is the average solution time of the three tested models (with different type of investor's preference) for the same number of assets $n$, and the vertical axis in Figure \ref{subfig:4b} is the logarithm of the average solution time.
\begin{table}[h]
	\caption{Numerical results of DCA, BDCA, UDCA and UBDCA with parameters $\varepsilon_{1}= 10^{-6}$, $\varepsilon_{2} = 10^{-4}$, $\alpha = \frac{\sqrt{2}}{\|d\|}$, $\beta=0.5$, $\sigma = 10^{-3}$, and $\varepsilon=10^{-8}$}
	\label{tab:performcomp}
	\centering
	\begin{adjustbox}{angle=90} 
		\resizebox{\columnwidth}{!}{
			\begin{tabular}{c|c|c|ccc|ccc|ccc|ccc} \hline
				\multirow{2}{*}{n} & \multirow{2}{*}{T} & \multirow{2}{*}{monos} & \multicolumn{3}{c|}{DCA} & \multicolumn{3}{c|}{BDCA} & \multicolumn{3}{c|}{UDCA} & \multicolumn{3}{c}{UBDCA} \\
				\cline{4-15}
				& & & iter & time(sec.) & obj & iter & time(sec.) & obj& iter & time(sec.) & obj& iter & time(sec.) & obj\\
				\hline\hline
				$4$ & $30$ & $69$ & $2$ & $0.01$ & $-1.980e+00$ & $2$ & $0.02$ & $-1.980e+00$ & $4$ & $0.00$ & $-1.980e+00$ & $3$ & $0.02$ & $-1.980e+00$\\
				$4$ & $30$ & $69$ & $6$ & $0.03$ & $-9.947e-02$ & $5$ & $0.03$ & $-9.947e-02$ & $28$ & $0.01$ & $-9.946e-02$ & $7$ & $0.00$ & $-9.947e-02$\\
				$4$ & $30$ & $69$ & $5$ & $0.03$ & $-1.678e+00$ & $5$ & $0.03$ & $-1.678e+00$ & $17$ & $0.01$ & $-1.678e+00$ & $8$ & $0.00$ & $-1.678e+00$\\
				$6$ & $30$ & $209$ & $8$ & $0.07$ & $-1.813e+00$ & $7$ & $0.05$ & $-1.813e+00$ & $16$ & $0.01$ & $-1.813e+00$ & $8$ & $0.01$ & $-1.813e+00$\\
				$6$ & $30$ & $209$ & $7$ & $0.05$ & $-1.267e-01$ & $5$ & $0.04$ & $-1.267e-01$ & $51$ & $0.02$ & $-1.267e-01$ & $6$ & $0.00$ & $-1.267e-01$\\
				$6$ & $30$ & $209$ & $13$ & $0.09$ & $-1.529e+00$ & $8$ & $0.06$ & $-1.529e+00$ & $44$ & $0.02$ & $-1.529e+00$ & $14$ & $0.01$ & $-1.529e+00$\\
				$8$ & $30$ & $494$ & $8$ & $0.07$ & $-1.848e+00$ & $7$ & $0.06$ & $-1.848e+00$ & $18$ & $0.01$ & $-1.848e+00$ & $11$ & $0.01$ & $-1.848e+00$\\
				$8$ & $30$ & $494$ & $9$ & $0.10$ & $-1.528e-01$ & $7$ & $0.09$ & $-1.528e-01$ & $83$ & $0.06$ & $-1.528e-01$ & $10$ & $0.01$ & $-1.528e-01$\\
				$8$ & $30$ & $494$ & $15$ & $0.14$ & $-1.700e+00$ & $10$ & $0.11$ & $-1.700e+00$ & $72$ & $0.05$ & $-1.700e+00$ & $16$ & $0.02$ & $-1.700e+00$\\
				$10$ & $30$ & $1000$ & $57$ & $0.68$ & $-1.800e+00$ & $10$ & $0.13$ & $-1.800e+00$ & $124$ & $0.15$ & $-1.800e+00$ & $15$ & $0.03$ & $-1.800e+00$\\
				$10$ & $30$ & $1000$ & $11$ & $0.16$ & $-1.473e-01$ & $7$ & $0.12$ & $-1.473e-01$ & $186$ & $0.27$ & $-1.472e-01$ & $15$ & $0.04$ & $-1.472e-01$\\
				$10$ & $30$ & $1000$ & $24$ & $0.33$ & $-1.823e+00$ & $9$ & $0.13$ & $-1.823e+00$ & $137$ & $0.16$ & $-1.823e+00$ & $15$ & $0.03$ & $-1.823e+00$\\
				$12$ & $30$ & $1819$ & $36$ & $0.67$ & $-1.848e+00$ & $12$ & $0.23$ & $-1.848e+00$ & $135$ & $0.27$ & $-1.848e+00$ & $15$ & $0.05$ & $-1.848e+00$\\
				$12$ & $30$ & $1819$ & $16$ & $0.42$ & $-1.510e-01$ & $9$ & $0.24$ & $-1.510e-01$ & $246$ & $0.61$ & $-1.509e-01$ & $17$ & $0.07$ & $-1.510e-01$\\
				$12$ & $30$ & $1819$ & $8$ & $0.19$ & $-1.663e+00$ & $7$ & $0.16$ & $-1.663e+00$ & $131$ & $0.27$ & $-1.663e+00$ & $19$ & $0.06$ & $-1.663e+00$\\
				$14$ & $30$ & $3059$ & $78$ & $2.18$ & $-2.277e+00$ & $10$ & $0.31$ & $-2.277e+00$ & $203$ & $0.65$ & $-2.277e+00$ & $14$ & $0.07$ & $-2.277e+00$\\
				$14$ & $30$ & $3059$ & $23$ & $0.88$ & $-1.551e-01$ & $11$ & $0.47$ & $-1.551e-01$ & $361$ & $1.53$ & $-1.549e-01$ & $14$ & $0.10$ & $-1.550e-01$\\
				$14$ & $30$ & $3059$ & $31$ & $1.43$ & $-1.844e+00$ & $10$ & $0.45$ & $-1.844e+00$ & $192$ & $0.69$ & $-1.844e+00$ & $19$ & $0.11$ & $-1.844e+00$\\
				$16$ & $30$ & $4843$ & $32$ & $1.59$ & $-1.869e+00$ & $11$ & $0.60$ & $-1.869e+00$ & $115$ & $0.60$ & $-1.869e+00$ & $20$ & $0.17$ & $-1.869e+00$\\
				$16$ & $30$ & $4844$ & $24$ & $1.48$ & $-1.593e-01$ & $12$ & $0.77$ & $-1.593e-01$ & $508$ & $3.48$ & $-1.592e-01$ & $31$ & $0.35$ & $-1.593e-01$\\
				$16$ & $30$ & $4844$ & $79$ & $6.40$ & $-1.757e+00$ & $16$ & $1.29$ & $-1.757e+00$ & $487$ & $3.03$ & $-1.757e+00$ & $20$ & $0.19$ & $-1.757e+00$\\
				$18$ & $30$ & $7314$ & $12$ & $1.03$ & $-2.236e+00$ & $8$ & $0.69$ & $-2.236e+00$ & $46$ & $0.39$ & $-2.236e+00$ & $23$ & $0.34$ & $-2.236e+00$\\
				$18$ & $30$ & $7313$ & $25$ & $2.38$ & $-1.698e-01$ & $15$ & $1.76$ & $-1.698e-01$ & $453$ & $4.66$ & $-1.697e-01$ & $31$ & $0.59$ & $-1.698e-01$\\
				$18$ & $30$ & $7312$ & $101$ & $10.63$ & $-2.057e+00$ & $23$ & $2.70$ & $-2.057e+00$ & $516$ & $4.48$ & $-2.057e+00$ & $35$ & $0.51$ & $-2.057e+00$\\
				$20$ & $30$ & $10625$ & $120$ & $15.93$ & $-1.986e+00$ & $13$ & $1.99$ & $-1.986e+00$ & $345$ & $4.39$ & $-1.986e+00$ & $31$ & $0.61$ & $-1.986e+00$\\
				$20$ & $30$ & $10622$ & $63$ & $10.83$ & $-1.672e-01$ & $23$ & $4.20$ & $-1.672e-01$ & $793$ & $13.38$ & $-1.669e-01$ & $42$ & $1.12$ & $-1.672e-01$\\
				$20$ & $30$ & $10623$ & $59$ & $10.57$ & $-1.861e+00$ & $14$ & $2.76$ & $-1.861e+00$ & $523$ & $7.12$ & $-1.861e+00$ & $38$ & $0.87$ & $-1.861e+00$\\
				\hline
				\multicolumn{3}{c|}{average} & $32$ & $2.53$ & & $10$ & $0.72$ & & $216$ & $1.72$ & & $18$ & $0.20$ & \\
				\hline
		\end{tabular}}	
	\end{adjustbox} 
\end{table}

\begin{figure}[ht!]
	\centering
	\subfigure[$n$ v.s. average solution time]{
		\begin{minipage}{0.46\textwidth}\label{subfig:4a}
			\includegraphics[width=\textwidth]{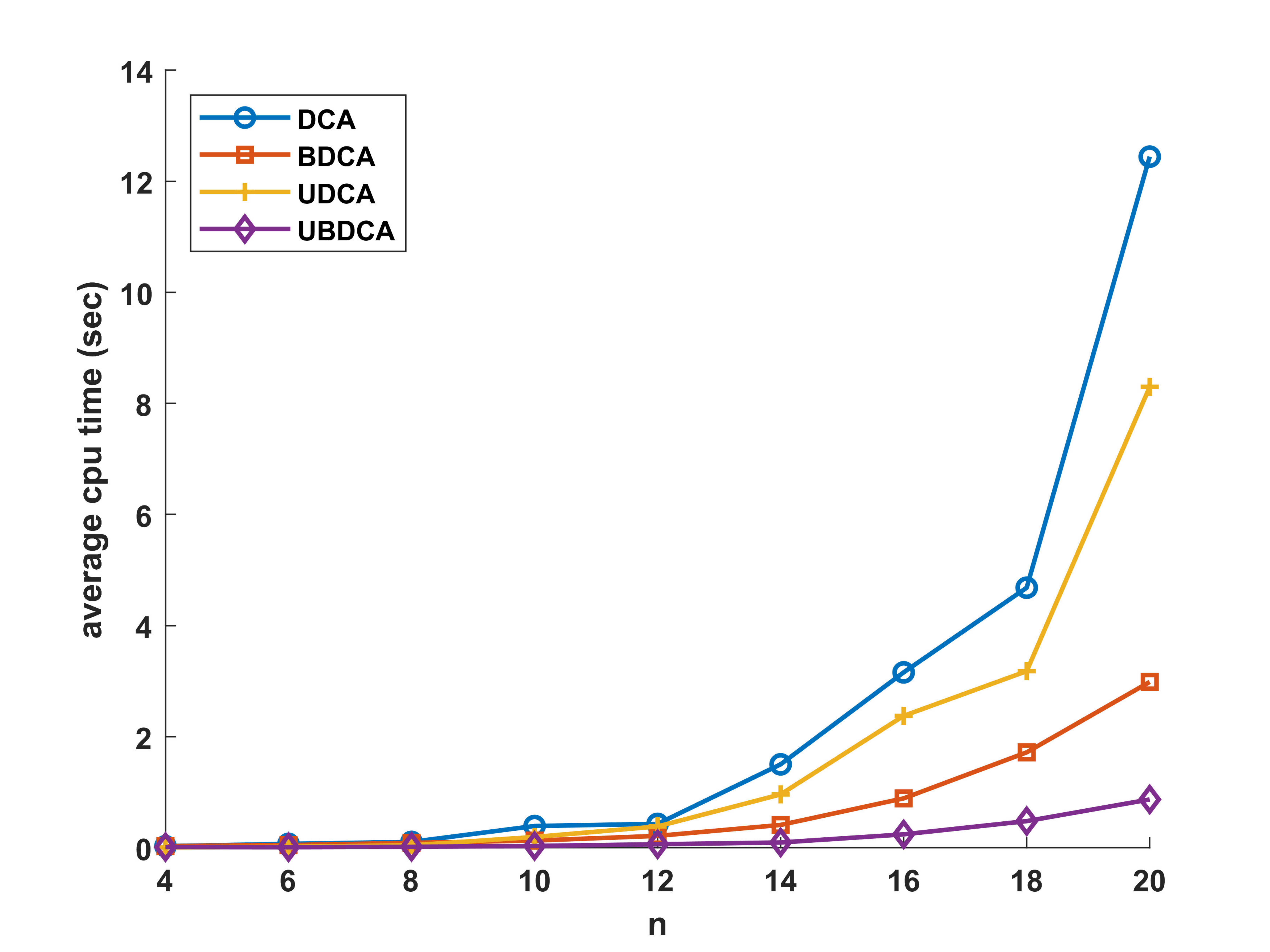}
		\end{minipage}
	}
	\subfigure[$n$ v.s. log average solution time]{
		\begin{minipage}{0.46\textwidth}\label{subfig:4b}
			\includegraphics[width=\textwidth]{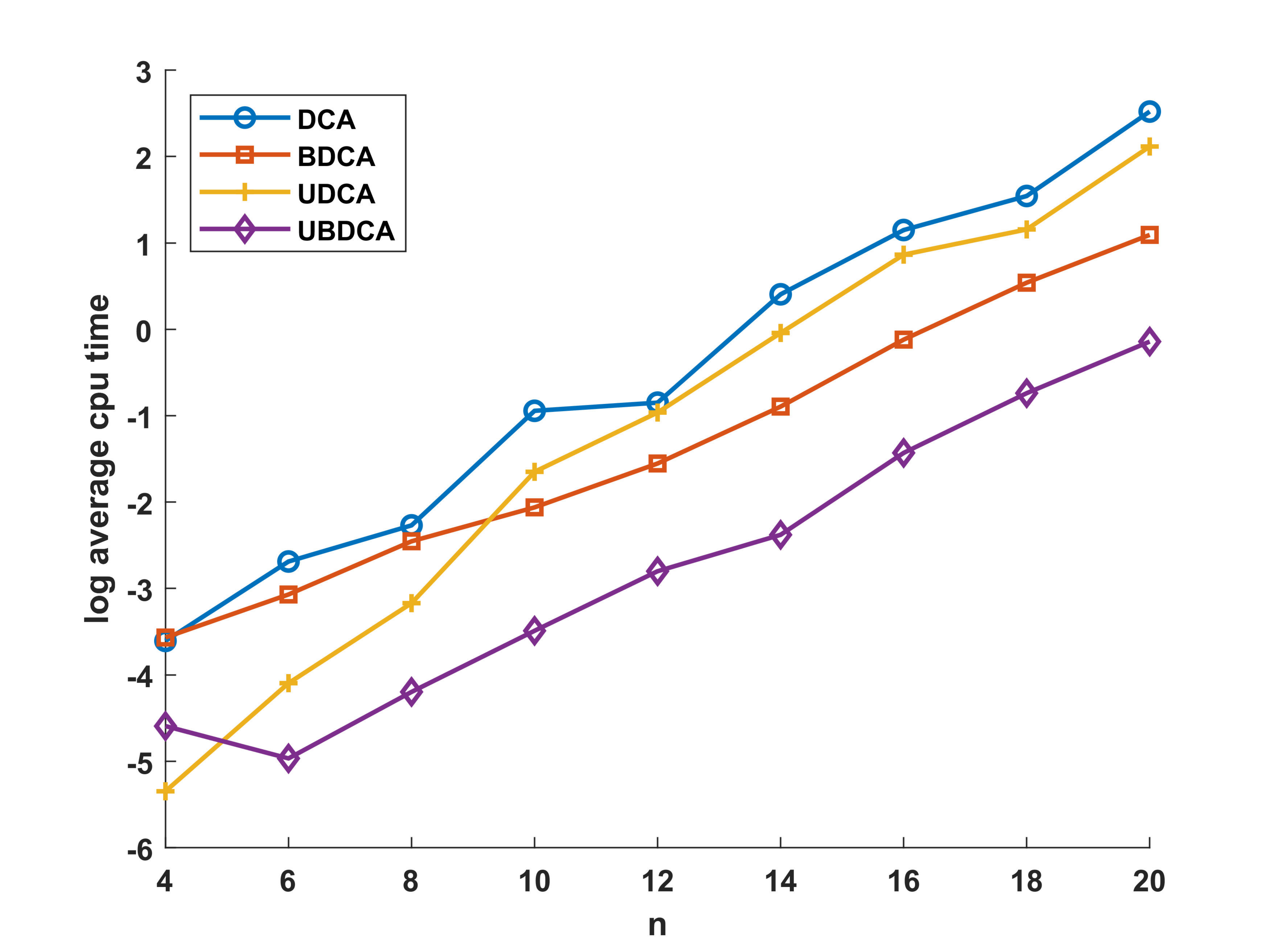}
		\end{minipage}
	}
	\caption{Performance of DCA, BDCA, UDCA and UBDCA for different number of assets $n$ with parameters $\varepsilon_{1}= 10^{-6}$, $\varepsilon_{2} = 10^{-4}$, $\alpha = \frac{\sqrt{2}}{\|d\|}$, $\beta=0.5$, $\sigma = 10^{-3}$, and $\varepsilon=10^{-8}$.}
	\label{fig:performcomp_time}
\end{figure} 
\paragraph{Comments on numerical results of four DCA-based algorithms:}
Concerning the computation time of different DC algorithms, we observe in Figures \ref{subfig:3a} and \ref{subfig:3b} that the boosted DCAs (UBDCA and BDCA) require less average number of iterations and result faster convergence than the classical DCAs (UDCA and DCA), thus the boosted DCAs indeed accelerate the convergence. Among the proposed four DCA-based algorithms, the fastest one is UBDCA, then BDCA, UDCA, and DCA. Based on the average cpu time in Table \ref{tab:performcomp}, we can estimate that UBDCA is about $3.6$ times faster than BDCA, $8$ times faster than UDCA and $12$ times faster than DCA. Figure \ref{subfig:3b} shows that the solution time for all DC algorithms seems increase exponentially with respect to the number of assets $n$, since the logarithm of the average solution time increases almost linearly with respect to the number of assets.  

Concerning the average number of iterations. It is observed in Table \ref{tab:performcomp} that the average number of iterations for the DC-SOS decomposition based DC algorithms (DCA and BDCA) is always smaller than the projective DC decomposition based DC algorithms (UDCA and UBDCA). UDCA requires $6.75$ times more iterations than DCA, and UBDCA requires $1.8$ times more iterations than BDCA. This observation demonstrates that the proposed DC-SOS decomposition indeed provides better convex over-approximation than the classical projective DC decomposition. 

Concerning the quality of the computed solutions obtained by different DC algorithms, we plot in Figure \ref{fig:performcomp_obj} that the objective values obtained by different DC algorithms substitute the objective values obtained by BDCA. Apparently, BDCA often obtains smaller objective values than the other three DC algorithms, but their differences are very small of order $O(10^{-4})$. Moreover, DC algorithms based on the DC-SOS decomposition (DCA and BDCA) often provide smaller objective values than DC algorithms based on the projective DC decomposition (UDCA and UBDCA). We believe that this is the benefit of DC-SOS decomposition technique, since the number of iterations using DC-SOS decomposition is always smaller than using the projective DC decomposition, which demonstrates that the DC-SOS decomposition can indeed provide better convex overestimator, thus DC-SOS is a promising approach, as expected, to provide high quality DC decomposition for polynomials.  
\begin{figure}[ht!]
	\centering
	\includegraphics[width=\linewidth]{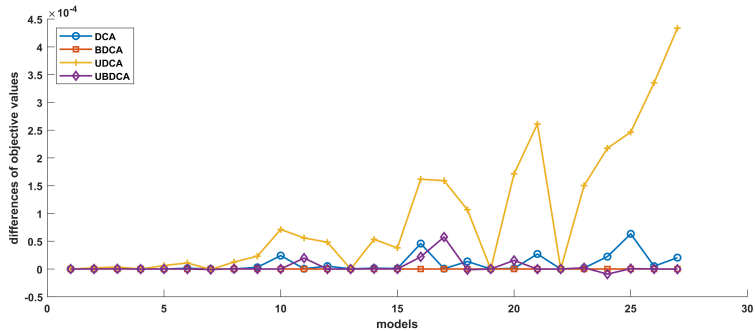}
	\caption{Differences between the objective values of four DC algorithms and the objective values of BDCA with respect to different MVSK models.}
	\label{fig:performcomp_obj}
\end{figure}

A possible way to understand why BDCA works better than DCA is that: DCA attempts to decrease $\|y^{k}-x^k\|$ during the iterations, while line search tries to increase $\|y^{k}-x^k\|$ ( since a new point $x^{k+1}$ is computed from $y^{k}$ such that $\|x^{k+1}-x^k\| > \|y^{k}-x^k\|$). Consequently, we sacrifice the decrease in $\|y^{k}-x^k\|$ to improve the decrease in the objective value with $f(x^{k+1})< f(y^{k})$, and to increase the potential to escape saddle points. This trade-off leads to improvements in the numerical results of BDCA. 

Note that the DC-SOS decomposition will produce high-order convex polynomial optimization subproblems, which usually takes more time to solve than the convex quadratic optimization problem. Therefore, a fast convex polynomial optimization solver dealing with CSOS polynomial objective function and linear constraints is extremely important to further improve the performance of DCA and BDCA. Classical nonlinear optimization approaches combing with Nesterov's acceleration technique may lead to superlinear convergence, and therefore deserves more attention. 

\subsubsection{Comparison with other methods}
In our previous work \cite{niu2011}, we have reported the numerical performances of several existing solvers Gloptipoly, LINGO and fmincon (SQP and Trust Region algorithms) for MVSK models. The interested reader can refer to \cite{niu2011} for more details. In this paper, we are going to compare our boosted DC algorithms (BDCA and UBDCA) with other existing solvers (KNITRO\cite{knitro}, FILTERSD\cite{filtersd}, IPOPT\cite{ipopt} and MATLAB FMINCON\cite{MATLAB}) which may represent the state-of-the-art level of local optimization solvers for large-scale nonlinear programming problems. Note that all of these methods are using POLYLAB as polynomial modeling tool in our tests. 

\paragraph{Comments on numerical results of other methods:} Numerical results of KNITRO, FILTERSD, IPOPT and FMINCON with default parameter settings are reported in Table \ref{tab:performothersolvers}. The comparisons of these solvers with the boosted DCAs (BDCA and UBDCA) are demonstrated in Figure \ref{fig:performcomp_time_others} in which the number of assets v.s. the logarithm of the average solution times of different methods is given in Figure \ref{subfig:5a}; while the differences between the objective values of all solvers and the objective values of KNITRO for all tested models are given in Figure \ref{subfig:5b}. Apparently, KNITRO often provides best numerical solutions within shortest solution time. 
\begin{table}[h]
	\caption{Numerical results of KNITRO, FILTERSD, IPOPT and FMINCON with default parameters.}
	\label{tab:performothersolvers}
	\centering
	\begin{adjustbox}{angle=90} 
		\resizebox{\columnwidth}{!}{
			\begin{tabular}{c|c|c|cc|cc|cc|cc} \hline
				\multirow{2}{*}{n} & \multirow{2}{*}{T} & \multirow{2}{*}{monos} & \multicolumn{2}{c|}{KNITRO} & \multicolumn{2}{c|}{FILTERSD} & \multicolumn{2}{c|}{IPOPT} & \multicolumn{2}{c}{FMINCON} \\
				\cline{4-11}
				& & & time(sec.) & obj & time(sec.) & obj & time(sec.) & obj & time(sec.) & obj\\
				\hline\hline
				$4$ & $30$ & $69$ & $0.01$ & $-1.980e+00$ & $0.01$ & $-1.980e+00$ & $0.01$ & $-1.980e+00$ & $0.01$ & $-1.980e+00$\\
				$4$ & $30$ & $69$ & $0.01$ & $-9.947e-02$ & $0.01$ & $-9.947e-02$ & $0.01$ & $-9.947e-02$ & $0.01$ & $-9.947e-02$\\
				$4$ & $30$ & $69$ & $0.01$ & $-1.678e+00$ & $0.01$ & $-1.678e+00$ & $0.01$ & $-1.678e+00$ & $0.01$ & $-1.678e+00$\\
				$6$ & $30$ & $209$ & $0.01$ & $-1.813e+00$ & $0.01$ & $-1.813e+00$ & $0.02$ & $-1.813e+00$ & $0.02$ & $-1.813e+00$\\
				$6$ & $30$ & $209$ & $0.01$ & $-1.267e-01$ & $0.01$ & $-1.267e-01$ & $0.01$ & $-1.267e-01$ & $0.02$ & $-1.267e-01$\\
				$6$ & $30$ & $209$ & $0.01$ & $-1.529e+00$ & $0.01$ & $-1.529e+00$ & $0.01$ & $-1.529e+00$ & $0.02$ & $-1.529e+00$\\
				$8$ & $30$ & $494$ & $0.01$ & $-1.848e+00$ & $0.01$ & $-1.848e+00$ & $0.02$ & $-1.848e+00$ & $0.02$ & $-1.848e+00$\\
				$8$ & $30$ & $494$ & $0.01$ & $-1.528e-01$ & $0.02$ & $-1.528e-01$ & $0.02$ & $-1.528e-01$ & $0.03$ & $-1.528e-01$\\
				$8$ & $30$ & $494$ & $0.01$ & $-1.700e+00$ & $0.01$ & $-1.700e+00$ & $0.02$ & $-1.700e+00$ & $0.02$ & $-1.700e+00$\\
				$10$ & $30$ & $1000$ & $0.02$ & $-1.800e+00$ & $0.01$ & $-1.800e+00$ & $0.02$ & $-1.800e+00$ & $0.03$ & $-1.800e+00$\\
				$10$ & $30$ & $1000$ & $0.02$ & $-1.473e-01$ & $0.05$ & $-1.473e-01$ & $0.04$ & $-1.473e-01$ & $0.04$ & $-1.473e-01$\\
				$10$ & $30$ & $1000$ & $0.01$ & $-1.823e+00$ & $0.01$ & $-1.823e+00$ & $0.04$ & $-1.823e+00$ & $0.03$ & $-1.823e+00$\\
				$12$ & $30$ & $1819$ & $0.02$ & $-1.848e+00$ & $0.01$ & $-1.848e+00$ & $0.03$ & $-1.848e+00$ & $0.04$ & $-1.848e+00$\\
				$12$ & $30$ & $1819$ & $0.03$ & $-1.510e-01$ & $0.08$ & $-1.510e-01$ & $0.07$ & $-1.510e-01$ & $0.08$ & $-1.510e-01$\\
				$12$ & $30$ & $1819$ & $0.02$ & $-1.663e+00$ & $0.02$ & $-1.663e+00$ & $0.03$ & $-1.663e+00$ & $0.04$ & $-1.663e+00$\\
				$14$ & $30$ & $3059$ & $0.03$ & $-2.277e+00$ & $0.03$ & $-2.277e+00$ & $0.05$ & $-2.277e+00$ & $0.05$ & $-2.277e+00$\\
				$14$ & $30$ & $3059$ & $0.05$ & $-1.551e-01$ & $0.18$ & $-1.551e-01$ & $0.09$ & $-1.551e-01$ & $0.11$ & $-1.551e-01$\\
				$14$ & $30$ & $3059$ & $0.04$ & $-1.844e+00$ & $0.05$ & $-1.844e+00$ & $0.06$ & $-1.844e+00$ & $0.06$ & $-1.844e+00$\\
				$16$ & $30$ & $4843$ & $0.04$ & $-1.869e+00$ & $0.02$ & $-1.869e+00$ & $0.08$ & $-1.869e+00$ & $0.06$ & $-1.869e+00$\\
				$16$ & $30$ & $4844$ & $0.08$ & $-1.593e-01$ & $0.19$ & $-1.593e-01$ & $0.16$ & $-1.593e-01$ & $0.15$ & $-1.593e-01$\\
				$16$ & $30$ & $4844$ & $0.07$ & $-1.757e+00$ & $0.13$ & $-1.757e+00$ & $0.13$ & $-1.757e+00$ & $0.11$ & $-1.757e+00$\\
				$18$ & $30$ & $7314$ & $0.07$ & $-2.236e+00$ & $0.03$ & $-2.236e+00$ & $0.16$ & $-2.236e+00$ & $0.11$ & $-2.236e+00$\\
				$18$ & $30$ & $7313$ & $0.15$ & $-1.698e-01$ & $0.39$ & $-1.698e-01$ & $0.26$ & $-1.698e-01$ & $0.23$ & $-1.698e-01$\\
				$18$ & $30$ & $7312$ & $0.08$ & $-2.057e+00$ & $0.05$ & $-2.057e+00$ & $0.18$ & $-2.057e+00$ & $0.15$ & $-2.057e+00$\\
				$20$ & $30$ & $10625$ & $0.11$ & $-1.986e+00$ & $0.08$ & $-1.986e+00$ & $0.24$ & $-1.986e+00$ & $0.17$ & $-1.986e+00$\\
				$20$ & $30$ & $10622$ & $0.19$ & $-1.672e-01$ & $0.49$ & $-1.672e-01$ & $0.41$ & $-1.672e-01$ & $0.37$ & $-1.672e-01$\\
				$20$ & $30$ & $10623$ & $0.14$ & $-1.861e+00$ & $0.23$ & $-1.861e+00$ & $0.25$ & $-1.861e+00$ & $0.20$ & $-1.861e+00$\\
				\hline
				\multicolumn{3}{c|}{average} & $0.05$ & & $0.08$ & & $0.09$ & & $0.08$ & \\
				\hline
		\end{tabular}}	
	\end{adjustbox} 
\end{table}

It worth noting that, except \verb|fmincon|, all other solvers are developed in either C/C++ or Fortran, and are invoked through MATLAB interfaces, thus they all perform pretty fast. We can observe that although BDCA and UBDCA are developed in MATLAB, they are still comparable with these state-of-the-art solvers. 
\begin{figure}[ht!]
	\centering
	\subfigure[$n$ v.s. log average solution time.]{
		\begin{minipage}{0.46\textwidth}\label{subfig:5a}
			\includegraphics[width=\textwidth]{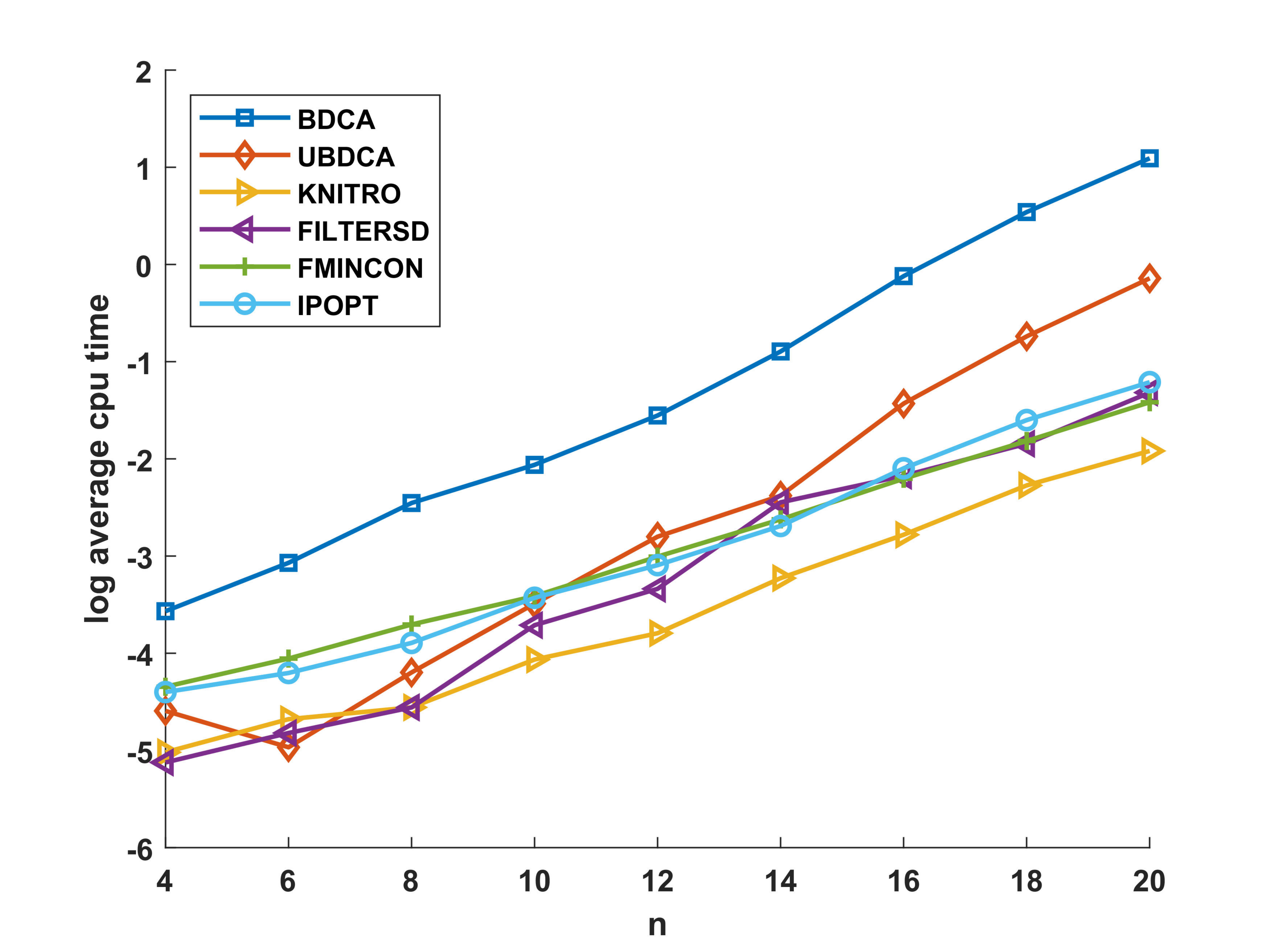}
		\end{minipage}
	}
	\subfigure[models v.s. objective values.]{
		\begin{minipage}{0.46\textwidth}\label{subfig:5b}
			\includegraphics[width=\textwidth]{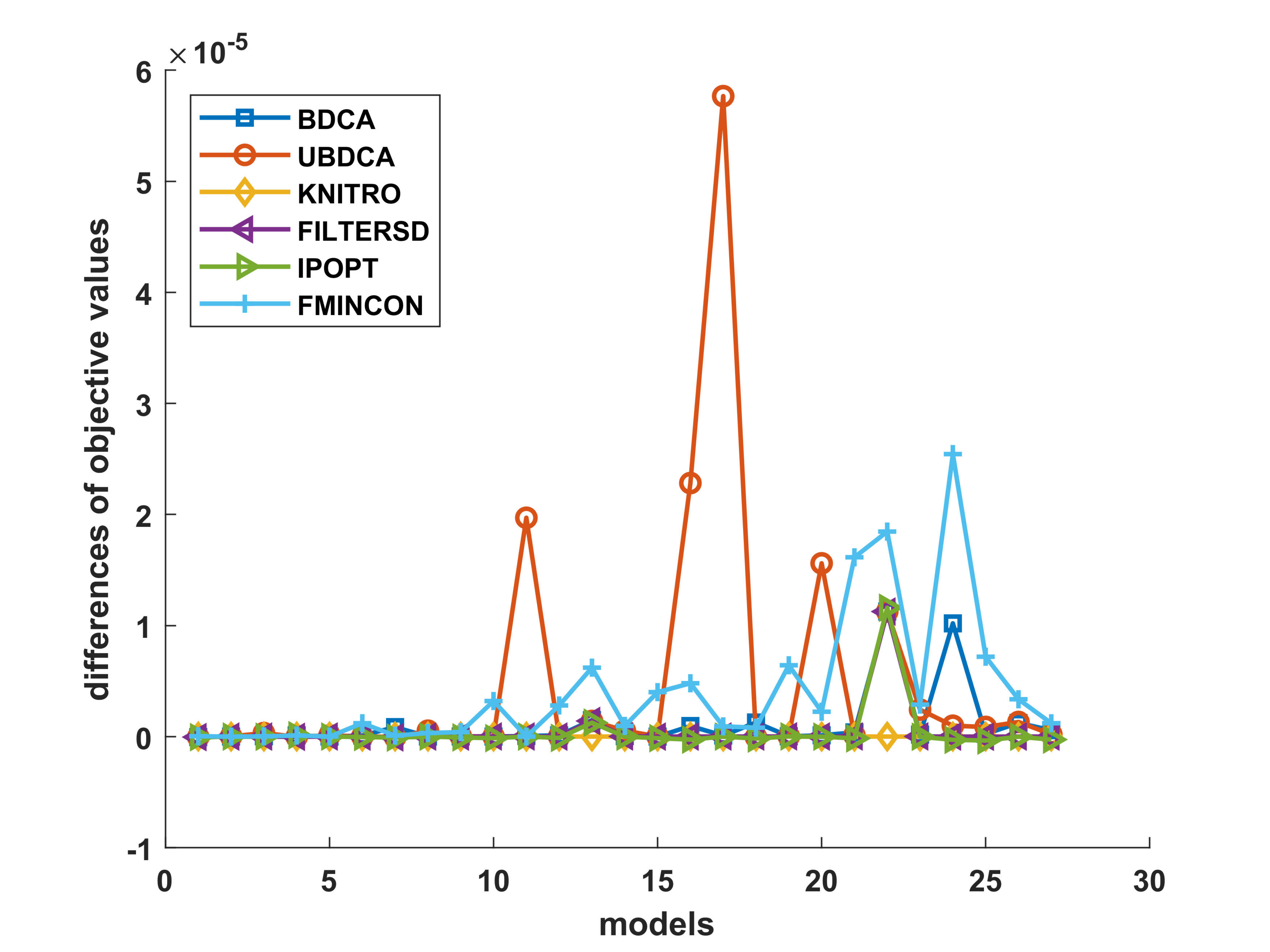}
		\end{minipage}
	}
	\caption{Comparisons among KNITRO, FILTERSD, IPOPT, FMINCON, BDCA and UBDCA.}
	\label{fig:performcomp_time_others}
\end{figure} 

\subsection{Numerical tests with real datasets}
In this subsection, we are interested in the shape of the portfolio efficient frontier. We are going to plot efficient frontiers for optimal portfolios provided by MVSK models for different types of investors (risk-neutral, risk-seeking and risk-aversing). We use real datasets of Shanghai A shares, and randomly select $10$ potentially `good' candidates among $1151$ assets, which are selected based on their positive average returns within $51$ weeks. Among these candidates, we establish MVSK models with desired expected return $m_1$ varying from $0$ to $0.4$ by step $0.001$. These models are in form of
$$\min \{ c_2 m_2(x) - c_3 m_3(x) + c_4 m_4(x): x\in \Omega, m_1(x) = r_k \}.$$
where $r_k$ is the desired expected return in $\{0,0.001,0.002,\ldots,0.4\}$, and the investor's preference $c$ is randomly chosen as follows: 
\begin{enumerate}
	\item[$\bullet$] for risk-neutral investor, we take $c_i\in [20,22], \forall i\in \{1,\ldots,4\}$; 
	\item[$\bullet$] for risk-aversing investor, we take $(c_2,c_4)\in [20,22]^2$ and $(c_1,c_3)\in [1,3]^2$;
	\item[$\bullet$] for risk-seeking investor, we take $(c_1,c_3)\in [20,22]^2$ and $(c_2,c_4)\in [1,3]^2$.
\end{enumerate} 
Then we use BDCA to solve these models for each type of investor, and obtain optimal portfolios to generate efficient frontiers. Figure \ref{fig:ef} illustrates efficient frontiers for three types of investors. Figure \ref{subfig:6a} presents the classical Mean-Variance efficient frontiers. As we expected, these frontiers are likely as portions of hyperbola. Figures \ref{subfig:6b}, \ref{subfig:6c} and \ref{subfig:6d} represent high-dimensional efficient frontiers (Mean-Variance-Skewness frontiers, Mean-Variance-Kurtosis frontiers, and Mean-Skewness-Kurtosis frontiers). We observe that the shape of efficient frontiers for three types of investors are quite different from each other. Risk-aversing efficient frontier has lower risk and lower expected returns; Risk-seeking efficient frontier has higher risk and higher expected returns; while risk-neutral efficient frontier is just between them. Moreover, the skewness and kurtosis of the optimal portfolios are both increasing when the mean (return) and variance (risk) are large enough, which indicates that a higher mean-variance optimal portfolio will have a higher probability of gains, but also have more uncertainty of returns. For more insights on the shapes of high-dimensional efficient frontiers, the reader can refer to \cite{de2003incorporating,de2004}.

\begin{figure}[ht!]
	\centering
	\subfigure[]{
		\begin{minipage}{0.46\textwidth}\label{subfig:6a}
			\includegraphics[width=\textwidth]{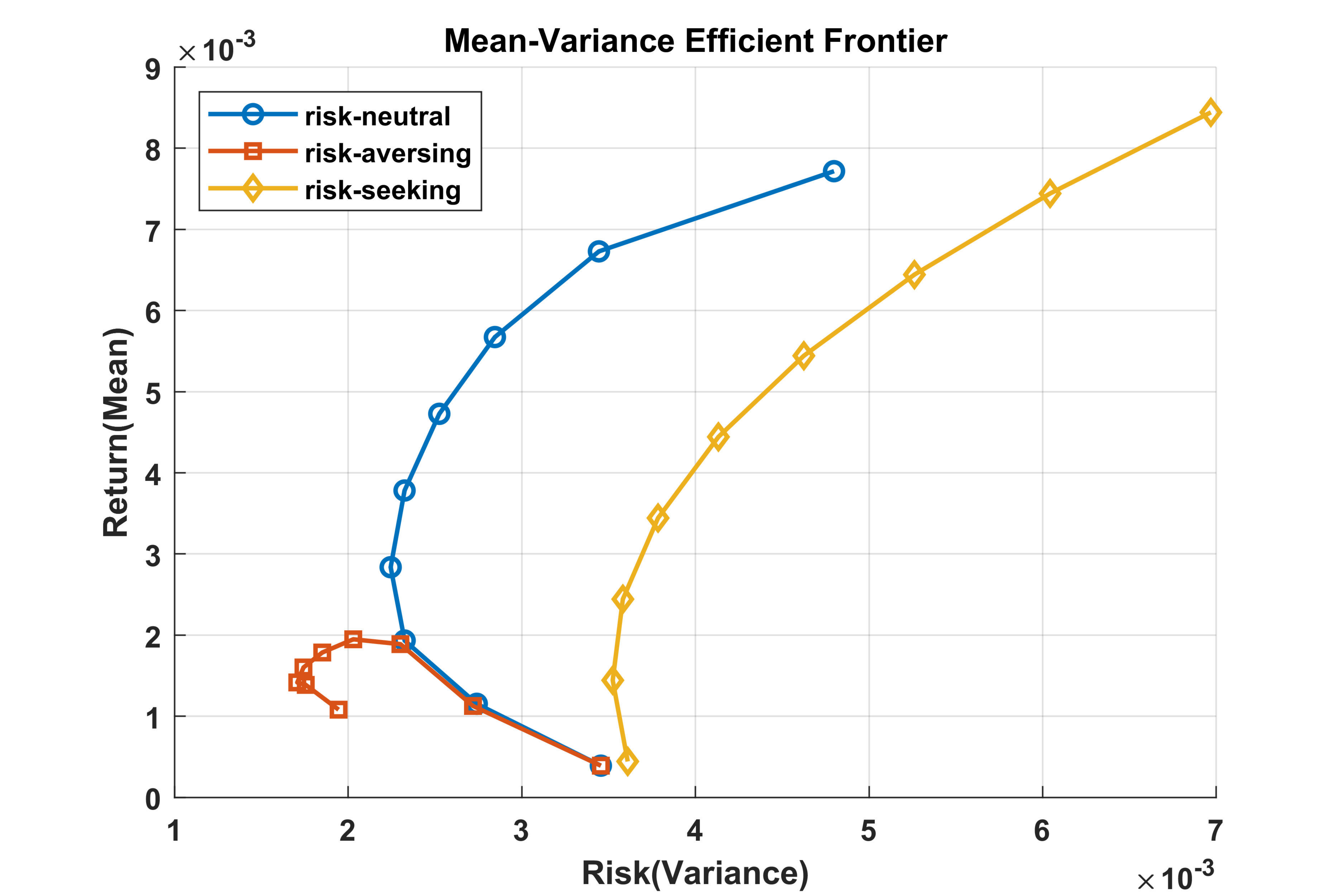}
		\end{minipage}
	}
	\subfigure[]{
		\begin{minipage}{0.46\textwidth}\label{subfig:6b}
			\includegraphics[width=\textwidth]{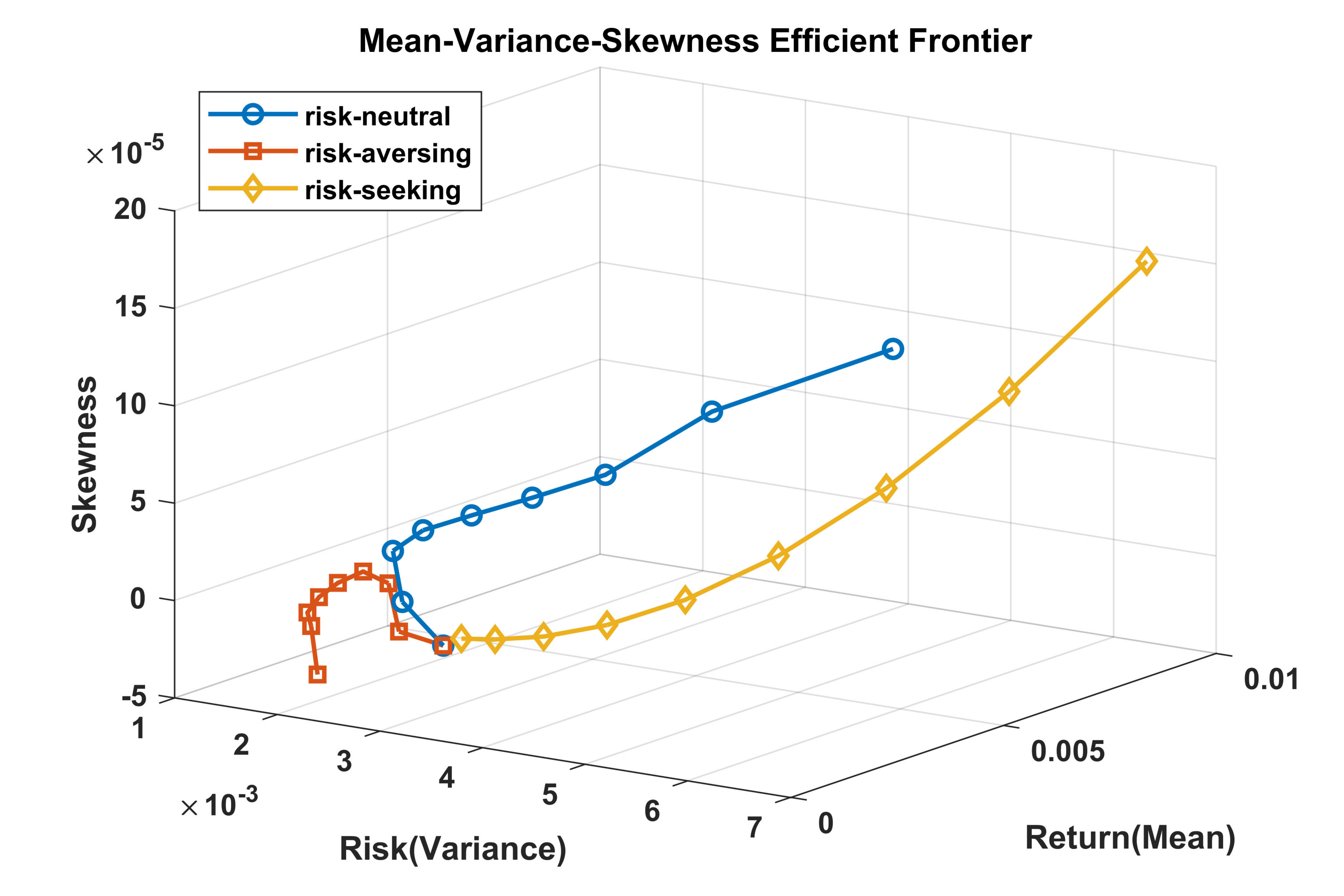}
		\end{minipage}
	}
	\subfigure[]{
		\begin{minipage}{0.46\textwidth}\label{subfig:6c}
			\includegraphics[width=\textwidth]{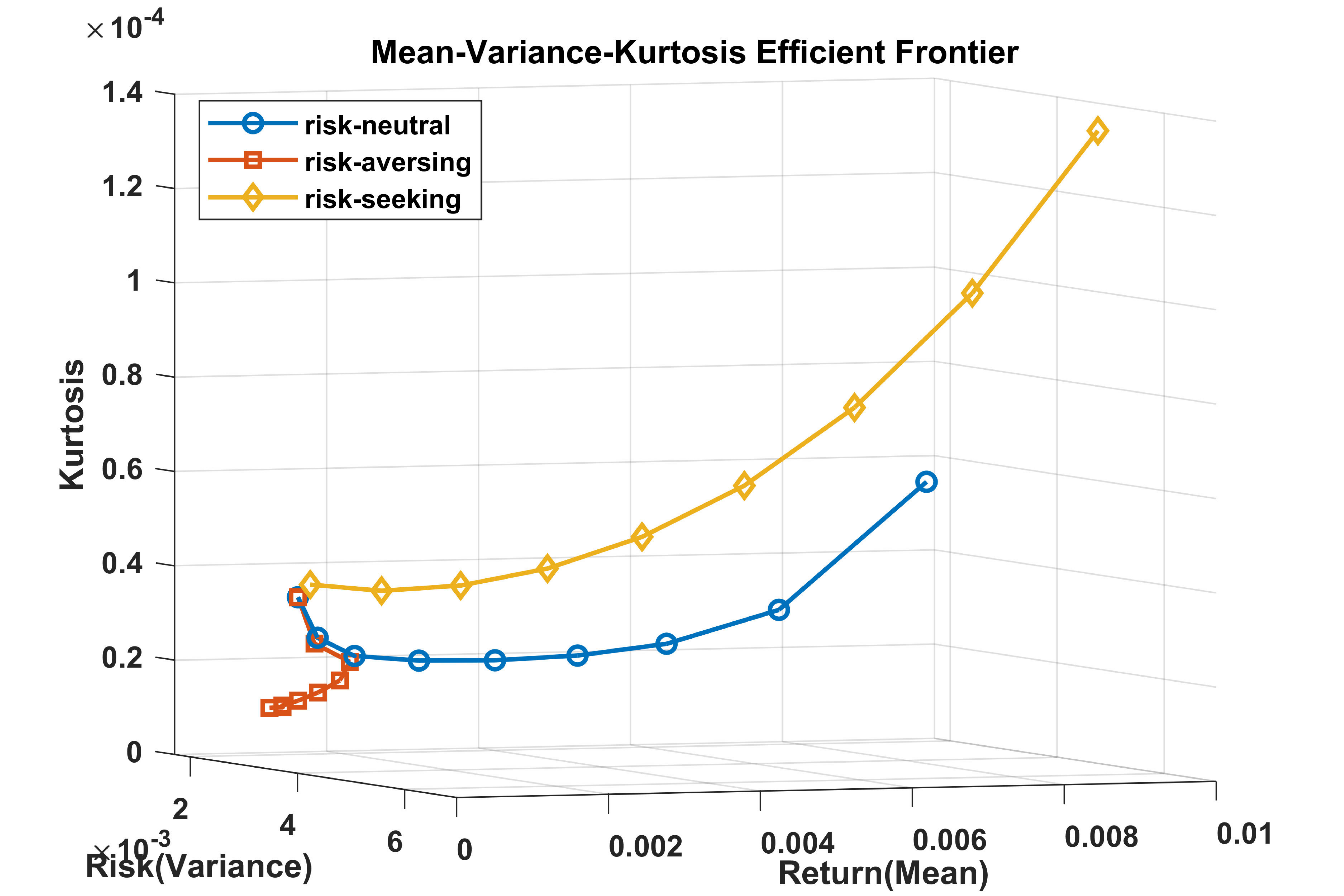}
		\end{minipage}
	}
	\subfigure[]{
		\begin{minipage}{0.46\textwidth}\label{subfig:6d}
			\includegraphics[width=\textwidth]{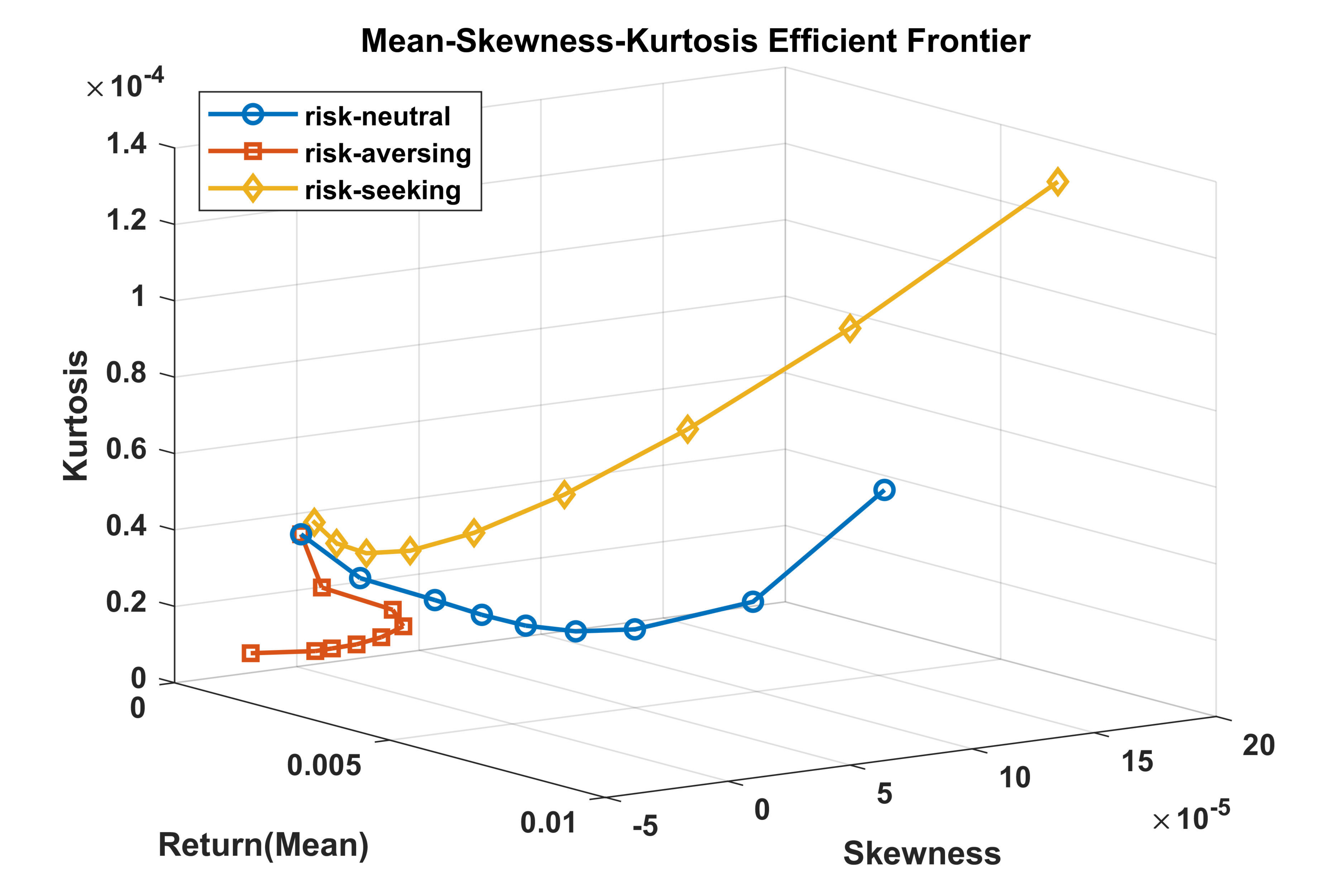}
		\end{minipage}
	}
	\caption{High dimensional portfolio efficient frontiers for risk-neutral, risk-aversing and risk-seeking cases.}
	\label{fig:ef}
\end{figure}

\section{Conclusion and perspectives}\label{sec:Conclu}
In this paper, we proposed several DC programming approaches for solving the high-order moment portfolio optimization problem (mean-variance-skewness-kurtosis model). We first reformulated the NVSK model as a DC programming problem based on the projective DC decomposition and DC-SOS decomposition, then developed corresponding DC algorithms, namely UDCA and DCA, to find local optimal solutions. Acceleration techniques for DCA based on DC descent direction and Armijo-type line search, namely Boosted-DCA, are investigated. Both smooth and nonsmooth DC programs with convex constraints are considered. Then, we use Boosted-DCA on MVSK model and established respectively to BDCA (Boosted-DCA with DC-SOS decomposition) and UBDCA (Boosted-DCA with the projective DC decomposition). Numerical simulations comparing DCA, BDCA, UBDCA and UDCA demonstrated that the boosted-DCA (BDCA and UBDCA) indeed accelerated the convergence of DCA (DCA and UDCA). Moreover, numerical results also showed that DC-SOS is a promising approach to provide high quality DC decomposition for polynomials.      

Our future works are varies. (1) a faster convex optimization solver for convex polynomial optimization subproblems based on DC-SOS over linear constraints is extremely important. It deserves more attention on developing more efficient convex optimization algorithm based on the very specific structure of the subproblem. There are two possible ideas. The first idea is the quadratic reformulation technique for DC-SOS decomposition. It is possible to formulate the subproblem with the particular DC-SOS decomposition given in this paper, as a convex quadratic program within convex quadratic constrains, by introducing additional variables to SOS terms. Then, convex quadratic optimization solvers (e.g., CPLEX, GUROBI and Mosek) can be applied to solve the subproblem, which is expected to be empirically more efficient than using a general nonlinear optimization solver (e.g., KNITRO and IPOPT). The second idea is to use accelerated convex optimization algorithms such as FISTA or accelerated interior point methods and Newton methods (based on Nesterov's acceleration or heavy ball acceleration) to solve the convex subproblem related to DC-SOS decomposition. (2) There is no need to completely solve the convex optimization subproblem in DCA. It suffices to find a better feasible solution $y^{k}\in \Omega$ verifying $f(y^{k})<f(x^k)$. E.g., we can use a descent algorithm to search for a better feasible point $y^{k}$ from $x^k$ without finding the global minimizer of the convex optimization subproblem. This idea, namely \emph{partial solution strategy}, has demonstrated to be useful for achieving empirically faster convergence in large-scale DC optimization problems (see e.g., \cite{niu2014}), whose convergence theorem has to be established. (3) Convergence theorem for Boosted-DCA applied to general DC program with DC set deserves more investigation, and the convergence rates for Boosted-DCA under different assumptions need more efforts. Researches in these directions will be reported subsequently.  

\section*{Acknowledgments.}
The first author is supported by the National Natural Science Foundation of China (Grant 11601327).

%


%
%
%

\begin{APPENDICES}
\section{\textbf{Proof of Theorem \ref{thm:convtod-stationaryforsmoothdcp}}}\label{appendix:A_thm:convtod-stationaryforsmoothdcp}
	The convex constrained DC program is equivalent to the standard form:
	$$\min_x\{ (g+\chi_{\mathcal{C}}) (x) - h(x)\}.$$ Applying DCA to the standard DC form with an initial point $x^0\in \R^n$ will generate a sequence $\{x^k\}$ such that $x^k\in \mathcal{C}, \forall k=1,2,\ldots$. Let $x^*$ be a limit point of the sequence $\{x^k\}$, then $x^*\in \mathcal{C}$ since $\mathcal{C}$ is closed. Based on the general convergence theory of DCA (see e.g., \cite{pham1997}), $x^*$ is a DC critical point of the standard DC program, i.e.,  
	\begin{equation}
	\label{eq:thm1-01}
	\emptyset \neq \partial (g+\chi_{\mathcal{C}}) (x^*) \cap \partial h(x^*) = \partial (g+\chi_{\mathcal{C}}) (x^*) \cap \{\nabla h(x^*)\},
	\end{equation}
	where the last equality is based on the differentiability of $h$.\\
	It follows that
	$$\emptyset \neq \{\nabla h(x^*)\} = \partial h(x^*) \subset   \partial (g+\chi_{\mathcal{C}}) (x^*),$$
	which proves the strongly DC criticality of $x^*$. 
	
\section{\textbf{The computation of DC decomposition for $m_3$ based on DC-SOS}}\label{appendix:B_m3}
Firstly, DC decompositions based on parity DC-SOS for monomials $x_i^2x_k$, $x_i x_j x_k$ and $x_i^3$ in $m_3$ and the gradients are computed as follows:

\noindent $\rhd$ \underline{For $x_i^3, \forall i\in \NN$}: a DC decomposition for $x_i^3$ on $\R^n_+$ is
$x_i^3 = g_i(x) - h_i(x)$
where
\begin{equation}\label{eq:gi&hi}
g_i(x) = x_i^3, h_i(x) = 0
\end{equation}
are both convex functions on $\R^n_+$. Their gradients are   
\begin{equation}\label{eq:dgi&dhi}
\nabla g_i(x) = 3x_i^2 e_i,\nabla h_i(x) = 0_{\R^n}.
\end{equation}
Note that we do not introduce a DC-SOS decomposition for $x_i^3$ since it is already convex, although it is obviously possible to introduce a DC-SOS decomposition for $x_i^3$ by replacing $x_k$ by $x_i$ in the first case $x_i^2 x_k$.

\noindent $\rhd$  \underline{For $x_i^2x_k, \forall (i,k)\in \PP$}: a parity DC-SOS decomposition is
\begin{align*}
x_i^2x_k = &\frac{1}{4}(x_i^2 - 0) \left((x_k+1)^2 - (x_k-1)^2\right) =  g_{i,k}(x) - h_{i,k}(x)
\end{align*}
where 
\begin{equation}
\label{eq:gik}
g_{i,k}(x) = \frac{1}{8} \left[\left(x_i^2 + (x_k+1 )^2 \right)^2+ (x_k-1)^4 \right],
\end{equation}
\begin{equation}
\label{eq:hik}
h_{i,k}(x)=\frac{1}{8}\left[(x_k+1)^4 + \left(x_i^2+(x_k-1)^2\right)^2\right]
\end{equation}
are CSOS on $\R^n$. Let $e_i$ denote the $i$-th unit vector of $\R^n$, then
\begin{equation}
\label{eq:dgik}
\nabla g_{i,k}(x)= \frac{1}{2}{{x}_{i}}\,\left( {{{x}_{k}}^{2}}+2{{x}_{k}}+{{{x}_{i}}^{2}}+1\right) e_i + \frac{1}{2}(2{{{x}_{k}}^{3}}+{{{x}_{i}}^{2}}\,{{x}_{k}}+6{{x}_{k}}+{{{x}_{i}}^{2}}) e_k,
\end{equation}
\begin{equation}
\label{eq:dhik}
\nabla h_{i,k}(x)= \frac{1}{2}{{x}_{i}}\,\left( {{{x}_{k}}^{2}}-2{{x}_{k}}+{{{x}_{i}}^{2}}+1\right) e_i + \frac{1}{2}(2{{{x}_{k}}^{3}}+{{{x}_{i}}^{2}}\,{{x}_{k}}+6{{x}_{k}}-{{{x}_{i}}^{2}}) e_k.
\end{equation}

\noindent $\rhd$ \underline{For $x_i x_j x_k, \forall (i,j,k)\in \QQ$}: a DC-SOS decomposition is given similarly as
\begin{align*}
x_i x_j x_k = & (x_i x_j) (x_k) =  g_{i,j,k}(x) - h_{i,j,k}(x),
\end{align*}
where 
\begin{equation}\label{eq:gijk}
g_{i,j,k}(x) = \frac{1}{32} \left[\left((x_i+x_j)^2 + (x_k+1 )^2 \right)^2+ \left((x_i-x_j)^2+(x_k-1)^2\right)^2 \right],
\end{equation}
\begin{equation}\label{eq:hijk}
h_{i,j,k}(x)=\frac{1}{32} \left[\left((x_i+x_j)^2 + (x_k-1 )^2 \right)^2+ \left((x_i-x_j)^2+(x_k+1)^2\right)^2 \right]
\end{equation}
are both CSOS on $\R^n$. Their gradients are
\begin{align}
\label{eq:dgijk}
\nabla g_{i,j,k}(x)=&\frac{1}{4}({{x}_{i}}\,{{{x}_{k}}^{2}}+2{{x}_{j}}\,{{x}_{k}}+3{{x}_{i}}\,{{{x}_{j}}^{2}}+{{{x}_{i}}^{3}}+{{x}_{i}})e_i + \frac{1}{4}({{x}_{j}}\,{{{x}_{k}}^{2}}+2{{x}_{i}}\,{{x}_{k}}+{{{x}_{j}}^{3}}+3{{{x}_{i}}^{2}}\,{{x}_{j}}+{{x}_{j}}) e_j \nonumber \\
& + \frac{1}{4}({{{x}_{k}}^{3}}+{{{x}_{j}}^{2}}\,{{x}_{k}}+{{{x}_{i}}^{2}}\,{{x}_{k}}+3{{x}_{k}}+2{{x}_{i}}\,{{x}_{j}}) e_k,
\end{align}
\begin{align}
\label{eq:dhijk}
\nabla h_{i,j,k}(x)= &\frac{1}{4}({{x}_{i}}\,{{{x}_{k}}^{2}}-2{{x}_{j}}\,{{x}_{k}}+3{{x}_{i}}\,{{{x}_{j}}^{2}}+{{{x}_{i}}^{3}}+{{x}_{i}})e_i + \frac{1}{4}({{x}_{j}}\,{{{x}_{k}}^{2}}-2{{x}_{i}}\,{{x}_{k}}+{{{x}_{j}}^{3}}+3{{{x}_{i}}^{2}}\,{{x}_{j}}+{{x}_{j}}) e_j \nonumber\\
& + \frac{1}{4}({{{x}_{k}}^{3}}+{{{x}_{j}}^{2}}\,{{x}_{k}}+{{{x}_{i}}^{2}}\,{{x}_{k}}+3{{x}_{k}}-2{{x}_{i}}\,{{x}_{j}}) e_k.
\end{align}	
$\rhd$ \underline{For $m_3$:} based on above three cases, a DC decomposition for $m_3$ is given by $$m_3(x) =  g_{m_3}(x) - h_{m_3}(x),$$
where \begin{align}\label{eq:gm3}
g_{m_3}(x) = & \sum_{i\in \IPS}S_{i,i,i}~g_i(x) + 3 \sum_{(i,j)\in \JPS}S_{i,i,k}~ g_{i,k}(x)  -  3 \sum_{(i,j)\in \JMS}S_{i,i,k}~ h_{i,k}(x) \nonumber\\ 
& + 6 \sum_{(i,j,k)\in \KPS}S_{i,j,k}~ g_{i,j,k}(x) - 6 \sum_{(i,j,k)\in \KMS}S_{i,j,k} ~h_{i,j,k}(x),
\end{align}
\begin{align}\label{eq:hm3}
h_{m_3}(x) = & - \sum_{i\in \IMS}S_{i,i,i}~g_i(x) + 3 \sum_{(i,j)\in \JPS}S_{i,i,k}~ h_{i,k}(x)  -  3 \sum_{(i,j)\in \JMS}S_{i,i,k}~ g_{i,k}(x) \nonumber\\ 
& + 6 \sum_{(i,j,k)\in \KPS}S_{i,j,k}~ h_{i,j,k}(x) - 6 \sum_{(i,j,k)\in \KMS}S_{i,j,k}~ g_{i,j,k}(x).
\end{align}
are both convex functions on $\Omega$, in which the index sets
$\IPS: = \{ i\in \NN: S_{i,i,i} >0 \}; \IMS:=\{ i\in \NN: S_{i,i,i} < 0 \};
\JPS: = \{ (i,k)\in \PP: S_{i,i,k}>0\}; \JMS: = \{ (i,k)\in \PP: S_{i,i,k}<0\}; \KPS: = \{ (i,j,k)\in \QQ: S_{i,j,k}>0\};$ and $\KMS :=  \{ (i,j,k)\in \QQ: S_{i,j,k}<0\};$ the functions $g_i, g_{i,k}, h_{i,k}, g_{i,j,k}$ and $h_{i,j,k}$ are given respectively in \eqref{eq:gi&hi}, \eqref{eq:gik}, \eqref{eq:hik}, \eqref{eq:gijk} and \eqref{eq:hijk}; and their gradients are computed accordingly.

\section{\textbf{The computation of DC decomposition for $m_4$ based on DC-SOS}}\label{appendix:C_m4}
DC decompositions of $5$ types of monomials $x_i^4$, $x_i^3x_k$, $x_i^2 x_k^2$, $x_i^2 x_j x_k$ and $x_i x_j x_k x_l$ based on DC-SOS and their gradients are computed as follows:

\noindent $\rhd$  \underline{For $x_i^4, \forall i\in \NN$}:  $x_i^4 =\widetilde{g}_i(x) - \widetilde{h}_i(x)$
where 
\begin{equation}
\label{eq:gitilde&hitilde}
\widetilde{g}_i(x) = x_i^4, \widetilde{h}_i(x) = 0,
\end{equation}
are convex functions on $\R^n$, and their gradients are
\begin{equation}
\label{eq:dgti&dhti}
\nabla \widetilde{g}_i(x) = 4x_i^3e_i, \nabla \widetilde{h}_i(x) = 0_{\R^n}.	
\end{equation}
\noindent $\rhd$  \underline{For $x_i^2 x_k^2, \forall (i,k)\in \PPH$:} $x_i^2 x_k^2 =  \widehat{g}_{i,k}(x) - \widehat{h}_{i,k}(x)$
where 
\begin{equation}
\label{eq:ghik&hhik}
\widehat{g}_{i,k}(x) = \frac{1}{2}(x_i^2 + x_k^2)^2;~\widehat{h}_{i,k}(x)=\frac{1}{2}(x_i^4 + x_k^4),
\end{equation}
are convex functions on $\R^n$, and their gradients are 
\begin{equation}
\label{eq:dghik}
\nabla \widehat{g}_{i,k}(x) = 2\left( {{{x}_{k}}^{2}}+{{{x}_{i}}^{2}}\right) ({{x}_{i}} e_i + {{x}_{k}} e_k), 
\end{equation}
\begin{equation}
\label{eq:dhhik}
\nabla \widehat{h}_{i,k}(x) = 2\left(x_i^3 e_i + x_k^3 e_k \right). 
\end{equation}
\noindent $\rhd$ \underline{ For $x_i^3x_k, \forall (i,k)\in \PP$:} 
$x_i^3x_k = \widetilde{g}_{i,k}(x) - \widetilde{h}_{i,k}(x)$
where 
\begin{equation}
\label{eq:giktilde}
\widetilde{g}_{i,k}(x) = \frac{1}{8} \left[\left(x_i^2 + (x_i+x_k )^2 \right)^2+ (x_i-x_k)^4 \right],
\end{equation}
\begin{equation}
\label{eq:hiktilde}
\widetilde{h}_{i,k}(x)=\frac{1}{8}\left[(x_i+x_k)^4 + \left(x_i^2+(x_i-x_k)^2\right)^2\right],
\end{equation}
are convex functions on $\R^n$, and their gradients are 
\begin{equation}
\label{eq:dgtik}
\nabla \widetilde{g}_{i,k}(x) = \frac{1}{2} {{x}_{i}}\,\left( 7{{{x}_{k}}^{2}}+3{{x}_{i}}\,{{x}_{k}}+5{{{x}_{i}}^{2}}\right)  e_i + \frac{1}{2}(2{{{x}_{k}}^{3}}+7{{{x}_{i}}^{2}}\,{{x}_{k}}+{{{x}_{i}}^{3}}) e_k,
\end{equation}
\begin{equation}
\label{eq:dhtik}
\nabla \widetilde{h}_{i,k}(x) =  \frac{1}{2}{{x}_{i}}\,\left( 7{{{x}_{k}}^{2}}-3{{x}_{i}}\,{{x}_{k}}+5{{{x}_{i}}^{2}}\right)  e_i + \frac{1}{2}(2{{{x}_{k}}^{3}}+7{{{x}_{i}}^{2}}\,{{x}_{k}}-{{{x}_{i}}^{3}}) e_k.
\end{equation}
\noindent $\rhd$   \underline{For $x_i^2 x_j x_k, \forall (i,j,k)\in \QQH$:} $x_i^2 x_j x_k = \widetilde{g}_{i,j,k}(x) - \widetilde{h}_{i,j,k}(x)$
where 
\begin{equation}
\label{eq:gtijk}
\widetilde{g}_{i,j,k}(x) = \frac{1}{8} \left[\left(x_i^2 + (x_j+x_k )^2 \right)^2+ (x_j-x_k)^4 \right];
\end{equation}
\begin{equation}
\label{eq:htijk}
\widetilde{h}_{i,j,k}(x)=\frac{1}{8}\left[(x_j+x_k)^4 + \left(x_i^2+(x_j-x_k)^2\right)^2\right]
\end{equation}
are convex functions on $\R^n$, and their gradients are 
\begin{align}
\label{eq:dgtijk}
\nabla \widetilde{g}_{i,j,k}(x) = &
\frac{1}{2}{{x}_{i}}\,\left( {{{x}_{k}}^{2}}+2{{x}_{j}}\,{{x}_{k}}+{{{x}_{j}}^{2}}+{{{x}_{i}}^{2}}\right) e_i \nonumber +\frac{1}{2}(6{{x}_{j}}\,{{{x}_{k}}^{2}}+{{{x}_{i}}^{2}}\,{{x}_{k}}+2{{{x}_{j}}^{3}}+{{{x}_{i}}^{2}}\,{{x}_{j}})e_j \nonumber\\
&+ \frac{1}{2} (2{{{x}_{k}}^{3}}+6{{{x}_{j}}^{2}}\,{{x}_{k}}+{{{x}_{i}}^{2}}\,{{x}_{k}}+{{{x}_{i}}^{2}}\,{{x}_{j}}) e_k,
\end{align}
\begin{align}
\label{eq:dhtijk}
\nabla \widetilde{h}_{i,j,k}(x) = &
\frac{1}{2}{{x}_{i}}\,\left( {{{x}_{k}}^{2}}-2{{x}_{j}}\,{{x}_{k}}+{{{x}_{j}}^{2}}+{{{x}_{i}}^{2}}\right) e_i \nonumber + \frac{1}{2}(6{{x}_{j}}\,{{{x}_{k}}^{2}}-{{{x}_{i}}^{2}}\,{{x}_{k}}+2{{{x}_{j}}^{3}}+{{{x}_{i}}^{2}}\,{{x}_{j}})e_j \nonumber\\
&+ \frac{1}{2} (2{{{x}_{k}}^{3}}+6{{{x}_{j}}^{2}}\,{{x}_{k}}+{{{x}_{i}}^{2}}\,{{x}_{k}}-{{{x}_{i}}^{2}}\,{{x}_{j}}) e_k.
\end{align}
\noindent $\rhd$  \underline{For $x_i x_j x_k x_l, \forall (i,j,k,l)\in \RR$:} $
x_i x_j x_k x_l= g_{i,j,k,l}(x) - h_{i,j,k,l}(x)$
where 
\begin{equation}
\label{eq:gijkl}
g_{i,j,k,l}(x) = \frac{1}{32} [\left((x_i+x_j)^2 + (x_k+x_l)^2 \right)^2  + \left((x_i-x_j)^2 + (x_k-x_l)^2\right)^2],
\end{equation}
\begin{equation}
\label{eq:hijkl}
h_{i,j,k,l}(x)=\frac{1}{32} [\left((x_i+x_j)^2 + (x_k-x_l)^2 \right)^2 + \left((x_i-x_j)^2 + (x_k+x_l)^2\right)^2]
\end{equation}
are convex functions on $\R^n$, and their gradients are
{\small\begin{align}
\label{eq:dgijkl}
\nabla g_{i,j,k,l}(x) = &
\frac{1}{4}\big[({{x}_{i}}\,{{{x}_{l}}^{2}}+2{{x}_{j}}\,{{x}_{k}}\,{{x}_{l}}+{{x}_{i}}\,{{{x}_{k}}^{2}}+3{{x}_{i}}\,{{{x}_{j}}^{2}}+{{{x}_{i}}^{3}}) e_i + ({{x}_{j}}\,{{{x}_{l}}^{2}}+2{{x}_{i}}\,{{x}_{k}}\,{{x}_{l}}+{{x}_{j}}\,{{{x}_{k}}^{2}}+{{{x}_{j}}^{3}}+3{{{x}_{i}}^{2}}\,{{x}_{j}})e_j \nonumber\\
&+  (3{{x}_{k}}\,{{{x}_{l}}^{2}}+2{{x}_{i}}\,{{x}_{j}}\,{{x}_{l}}+{{{x}_{k}}^{3}}+{{{x}_{j}}^{2}}\,{{x}_{k}}+{{{x}_{i}}^{2}}\,{{x}_{k}}) e_k +  ({{{x}_{l}}^{3}}+3{{{x}_{k}}^{2}}\,{{x}_{l}}+{{{x}_{j}}^{2}}\,{{x}_{l}}+{{{x}_{i}}^{2}}\,{{x}_{l}}+2{{x}_{i}}\,{{x}_{j}}\,{{x}_{k}}) e_l\big],
\end{align}}	
{\small\begin{align}
\label{eq:dhijkl}
\nabla h_{i,j,k,l}(x) = &
\frac{1}{4}\big[({{x}_{i}}\,{{{x}_{l}}^{2}}-2{{x}_{j}}\,{{x}_{k}}\,{{x}_{l}}+{{x}_{i}}\,{{{x}_{k}}^{2}}+3{{x}_{i}}\,{{{x}_{j}}^{2}}+{{{x}_{i}}^{3}}) e_i+ \frac{1}{4}({{x}_{j}}\,{{{x}_{l}}^{2}}-2{{x}_{i}}\,{{x}_{k}}\,{{x}_{l}}+{{x}_{j}}\,{{{x}_{k}}^{2}}+{{{x}_{j}}^{3}}+3{{{x}_{i}}^{2}}\,{{x}_{j}})e_j \nonumber\\
&+ \frac{1}{4} (3{{x}_{k}}\,{{{x}_{l}}^{2}}-2{{x}_{i}}\,{{x}_{j}}\,{{x}_{l}}+{{{x}_{k}}^{3}}+{{{x}_{j}}^{2}}\,{{x}_{k}}+{{{x}_{i}}^{2}}\,{{x}_{k}}) e_k+ \frac{1}{4} ({{{x}_{l}}^{3}}+3{{{x}_{k}}^{2}}\,{{x}_{l}}+{{{x}_{j}}^{2}}\,{{x}_{l}}+{{{x}_{i}}^{2}}\,{{x}_{l}}-2{{x}_{i}}\,{{x}_{j}}\,{{x}_{k}}) e_l\big].
\end{align}}
$\rhd$ \underline{For $m_4$:} based on above $5$ cases, a DC decomposition of $m_4$ is given by $$m_4(x) =  g_{m_4}(x) - h_{m_4}(x),$$
where $g_{m_4}$ and $h_{m_4}$ are convex functions on $\R^n$ defined by:
{\footnotesize \begin{align}\label{eq:gm4}
g_{m_4}(x) = & \sum_{i\in \IPK}K_{i,i,i,i}~\widetilde{g}_i(x) + 4 \sum_{(i,k)\in \JPK}K_{i,i,i,k}~\widetilde{g}_{i,k}(x) - 4 \sum_{(i,k)\in \JMK}K_{i,i,i,k}~\widetilde{h}_{i,k}(x)  + 6 \sum_{(i,k)\in\JHPK}K_{i,i,k,k}~\widehat{g}_{i,k}(x) \nonumber\\
&- 6 \sum_{(i,k)\in \JHMK}K_{i,i,k,k}~\widehat{h}_{i,k}(x) +  12 \sum_{(i,j,k)\in \KHPK}K_{i,i,j,k}~\widetilde{g}_{i,j,k}(x) -  12 \sum_{(i,j,k)\in \KHMK}K_{i,i,j,k}~\widetilde{h}_{i,j,k}(x) \nonumber\\
& + 24 \sum_{(i,j,k,l)\in \LPK}K_{i,j,k,l}~g_{i,j,k,l}(x) - 24 \sum_{(i,j,k,l)\in \LMK}K_{i,j,k,l}~h_{i,j,k,l}(x),
\end{align}}
{\footnotesize\begin{align}\label{eq:hm4}
h_{m_4}(x) = & - \sum_{i\in \IMK}K_{i,i,i,i}~\widetilde{g}_i(x) + 4 \sum_{(i,k)\in \JPK}K_{i,i,i,k}~\widetilde{h}_{i,k}(x) - 4 \sum_{(i,k)\in \JMK}K_{i,i,i,k}~\widetilde{g}_{i,k}(x) + 6 \sum_{(i,k)\in\JHPK}K_{i,i,k,k}~\widehat{h}_{i,k}(x) \nonumber \\
&- 6 \sum_{(i,k)\in \JHMK}K_{i,i,k,k}~\widehat{g}_{i,k}(x) +  12 \sum_{(i,j,k)\in \KHPK}K_{i,i,j,k}~\widetilde{h}_{i,j,k}(x) -  12 \sum_{(i,j,k)\in \KHMK}K_{i,i,j,k}~\widetilde{g}_{i,j,k}(x)\nonumber\\
& + 24 \sum_{(i,j,k,l)\in \LPK}K_{i,j,k,l}~h_{i,j,k,l}(x) - 24 \sum_{(i,j,k,l)\in \LMK}K_{i,j,k,l}~g_{i,j,k,l}(x)
\end{align}}
in which the index sets 
$\IPK: = \{ i\in \NN: K_{i,i,i,i} >0 \}; \IMK:=\{ i\in \NN: K_{i,i,i,i} < 0 \}; \JPK: = \{ (i,k)\in \PP: K_{i,i,i,k}>0\}; \JMK: = \{ (i,k)\in \PP: K_{i,i,i,k}<0\}; \JHPK: = \{ (i,k)\in \PPH: K_{i,i,k,k}>0\}; \JHMK: = \{ (i,k)\in \PPH: K_{i,i,k,k}<0\}; \KHPK: = \{ (i,j,k)\in \QQH: K_{i,i,j,k}>0\}; \KHMK :=  \{ (i,j,k)\in \QQH: K_{i,i,j,k}<0\}; \LPK: = \{ (i,j,k,l)\in \RR: K_{i,j,k,l}>0\};$ and $\LMK :=  \{ (i,j,k,l)\in \RR: K_{i,j,k,l}<0\};$ the functions $\widetilde{g}_i, \widetilde{g}_{i,k}, \widetilde{h}_{i,k}, \widehat{g}_{i,k}, \widehat{h}_{i,k}, \widetilde{g}_{i,j,k}, \widetilde{h}_{i,j,k}, g_{i,j,k,l}$ and $h_{i,j,k,l}$ are defined in \eqref{eq:gitilde&hitilde}, \eqref{eq:giktilde}, \eqref{eq:hiktilde}, \eqref{eq:ghik&hhik}, \eqref{eq:gtijk}, \eqref{eq:htijk}, \eqref{eq:gijkl} and \eqref{eq:hijkl} respectively; and their gradients are computed accordingly.

\section{\textbf{Proof of Theorem \ref{thm:convUDCA}}}\label{appendix:D_thm:convUDCA}
(i) By the strong convexity of $G$ (for large enough $\eta$) and the convexity of $H$, we have
\begin{equation}
	\label{eq:strongconvexityofG}
	G(x^{k}) \geq G(x^{k+1}) + \langle \nabla G(x^{k+1}), x^k - x^{k+1} \rangle + \frac{\eta}{2}\|x^{k}-x^{k+1}\|^2,
\end{equation} 
\begin{equation}
	\label{eq:convexityofH}
	H(x^{k+1}) \geq H(x^{k}) + \langle \nabla H(x^{k}), x^{k+1} - x^{k} \rangle.
\end{equation}
The first-order optimality condition for $x^{k+1}= \argmin \big\{ G(x)  - \langle x, \nabla H(x^k)\rangle : x\in \Omega \big\}$ gives
$$\nabla H(x^k) \in \nabla G(x^{k+1}) + N_{\Omega}(x^{k+1}),$$
where $N_{\Omega}(x^{k+1})$ is the normal cone of $\Omega$ at $x^{k+1}$.
Thus $\exists y\in N_{\Omega}(x^{k+1})$ such that 
\begin{equation}
	\label{eq:rel01}
	\nabla H(x^k) = \nabla G(x^{k+1}) + y
\end{equation}
and
\begin{equation}
	\label{eq:rel02}
	\langle y, x^{k+1}-x^k\rangle \geq 0.
\end{equation}
Then 
\begin{eqnarray}\label{eq:rel03}
	\langle \nabla H(x^{k}), x^{k+1} - x^{k} \rangle &\overset{\eqref{eq:rel01}}{=}& \langle \nabla G(x^{k+1}) + y, x^{k+1} - x^{k} \rangle \nonumber\\
	&=& \langle \nabla G(x^{k+1}) , x^{k+1} - x^{k} \rangle + \langle y, x^{k+1} - x^{k} \rangle \nonumber\\
	&\overset{\eqref{eq:rel02}}{\geq}& \langle \nabla G(x^{k+1}) , x^{k+1} - x^{k} \rangle.
\end{eqnarray}
It follows that
\begin{eqnarray}
	f(x^k)-f(x^{k+1}) &=& G(x^k) - G(x^{k+1}) - H(x^k) + H(x^{k+1})\nonumber\\
	&\overset{\eqref{eq:convexityofH},\eqref{eq:strongconvexityofG}}{\geq}& \langle \nabla G(x^{k+1}), x^k - x^{k+1} \rangle + \frac{\eta}{2}\|x^{k}-x^{k+1}\|^2 + \langle \nabla H(x^{k}), x^{k+1} - x^{k} \rangle\nonumber\\
	&\overset{\eqref{eq:rel03}}{\geq}& \frac{\eta}{2}\|x^{k}-x^{k+1}\|^2,\nonumber
\end{eqnarray} 
that is
\begin{equation}
	\label{eq:lowbdofdifff}
	\boxed{f(x^k)-f(x^{k+1}) \geq \frac{\eta}{2}\|x^{k}-x^{k+1}\|^2.}
\end{equation}
(ii) The sequence $\{x^k\}$ generated by UDCA is bounded since all $x^k, \forall k\in \N$ are included in the compact set $\Omega$. The sequence $\{f(x^k)\}$ is lower bounded since any polynomial $f$ over a compact set $\Omega$ is bounded. The inequality \eqref{eq:lowbdofdifff} implies that the sequence $\{f(x^k)\}$ is non-increasing. The convergence of the sequence $\{f(x^k)\}$ is follows immediately by the boundedness and the non-increasing of the sequence $\{f(x^k)\}$.\\
(iii) The convergence of $\{\|x^k-x^{k+1}\|\}$ to $0$ follows immediately form the inequality \eqref{eq:lowbdofdifff} and the convergence of the sequence $\{f(x^k)\}$.\\
(iv) Summing the inequality \eqref{eq:lowbdofdifff} for $k$ from $0$ to $N$, we obtain
$$\frac{\eta}{2}\sum_{k=0}^{N} \|x^k - x^{k+1}\|^2 \leq f(x^0) - f(x^{N+1}) \leq f(x^0) - \min_{x\in \Omega} f(x).$$
Hence, taking the limit when $N\to \infty$ and by the fact that $\min_{x\in \Omega} f(x)$ is finite, we obtain  
$$\frac{\eta}{2}\sum_{k=0}^{\infty} \|x^k  - x^{k+1}\|^2 \leq f(x^0) - \min_{x\in \Omega} f(x) < \infty.$$
Then $$\boxed{\sum_{k=0}^{\infty} \|x^k  - x^{k+1}\|^2 < \infty.}$$
The proof of $\sum_{k=0}^{\infty} \|x^k-x^{k+1}\|<\infty$ is similar to \cite[Theorem 3.4]{Lethi2018Convergence} based on the \emph{\L{}ojasiewicz subgradient inequality}. \\
Let $\mathcal{X}$ be the set of limit points of the sequence $\{x^k\}\subset \Omega$. The non-increasing and the convergence of the sequence $\{f(x^k)\}$ and the continuity of $f$ imply that $f(\mathcal{X})$ is a constant denoted by $f^*$. Let $\Psi(x) := f(x) +\chi_{\Omega}(x) - f^*$. Then, 
\begin{equation}
	\label{eq:nonegofpsi}
	\Psi(x^k) = f(x^k) - f^*\geq 0, \forall k=1,2,\ldots,
\end{equation} and \begin{equation}
	\label{eq:limito0}
	\lim_{k\to \infty} \Psi(x^k)  = \Psi(\mathcal{X}) =0.
\end{equation}
Clearly, $\Psi$ is subanalytic and then satisfies the {\L}ojasiewicz subgradient inequality (Theorem \ref{thm:Loja-ineq}) as: there exist a {\L}ojasiewicz component $\theta\in [0,1)$, a constant $M>0$ and $\epsilon>0$ such that 
$$|\Psi(x)-\Psi(\mathcal{X})|^{\theta} \leq M \|y\|, \forall x\in \cup_{z\in \mathcal{X}} B(z,\epsilon)\footnote{$B(z,\epsilon):=\{x: \|x-z\|<\epsilon\}$ stands for the open ball centered at $z$ with radius $\epsilon$.}, \forall y\in \partial^L \Psi(x),$$
It follows by $\Psi(\mathcal{X}) = 0$ that 
\begin{equation}
	\label{eq:L-ineq}
	|\Psi(x)|^{\theta} \leq M \|y\|, \forall x\in \cup_{z\in \mathcal{X}} B(z,\epsilon), \forall y\in \partial^L \Psi(x).
\end{equation}
Since $\mathcal{X}$ is the set of limit points of the sequence $\{x^k\}$, then, 
\begin{equation}
	\label{eq:boundednessofxk}
	\exists N>0, \forall k\geq N, x^{k}\in \cup_{z\in \mathcal{X}} B(z,\epsilon).
\end{equation}
Combining \eqref{eq:nonegofpsi}, \eqref{eq:L-ineq} and \eqref{eq:boundednessofxk}, we get  
\begin{equation}
	\label{eq:L-ineqxk}
	\boxed{\exists N>0, M>0, \theta\in [0,1), \forall k\geq N, \forall y\in \partial^L \Psi(x^k), \Psi(x^k)^{\theta} \leq M \|y\|.}
\end{equation}
By the concavity of the function $t\mapsto t^{1-\theta}$ ($\theta\in [0,1)$) on $(0,\infty)$, we have 
\begin{equation}
	\label{eq:concavity}
	(\Psi(x^{k}))^{1-\theta} - (\Psi(x^{k+1}))^{1-\theta} \geq (1-\theta) \Psi(x^{k})^{-\theta} (\Psi(x^{k}) - \Psi(x^{k+1})).
\end{equation}
Since the convex polynomial $H(x)= \frac{\eta}{2} \|x\|^2_2 - f(x)$ has locally Lipschicz continuous gradient over the compact set $\Omega$, then  
\begin{equation}\label{eq:L-smoothofh}
	\boxed{\exists L>0, \forall k\geq 1, \|\nabla H(x^k) - \nabla H(x^{k+1})\|\leq L \|x^k-x^{k+1}\|.}
\end{equation}
The first order optimality condition for the convex subproblem \eqref{prob:qcp} gives 
$$ \nabla H(x^k) \in \nabla G(x^{k+1}) +  N_{\Omega}(x^{k+1}).$$
Thus
$$\nabla H(x^k) - \nabla H(x^{k+1})  \in \nabla G(x^{k+1})  - \nabla H(x^{k+1}) + N_{\Omega}(x^{k+1})  = \partial^L \Psi (x^{k+1}),$$
that is 
\begin{equation}
	\label{eq:elementinpartialpsiwithh}
	\boxed{\forall k\geq 1, \nabla H(x^k) - \nabla H(x^{k+1}) \in \partial^L \Psi (x^{k+1}).}
\end{equation}
It follows from \eqref{eq:L-ineqxk} and \eqref{eq:elementinpartialpsiwithh} that 
\begin{equation}
	\label{eq:L-ineqatxkwithh}
	\boxed{\exists N>0, M>0, \theta\in [0,1),\forall k\geq N, \Psi(x^{k+1})^{\theta} \leq M \|\nabla H(x^k) - \nabla H(x^{k+1})\|.}
\end{equation}
By the Young's inequality $a\leq a^2/b + b/4$ with $a,b>0$, we have
\begin{equation}\label{eq:youngineq}
	\|x^{k} - x^{k+1}\| \leq \frac{\|x^{k} - x^{k+1}\|^2}{\|x^{k} - x^{k-1}\|} + \frac{\|x^{k} - x^{k-1}\|}{4}.
\end{equation}
Then, for each $k\geq N+1$:
\begin{eqnarray*}
	(\Psi(x^{k}))^{1-\theta} - (\Psi(x^{k+1}))^{1-\theta}
	&\overset{\eqref{eq:concavity}}{\geq}& (1-\theta) \Psi(x^{k})^{-\theta} (\Psi(x^{k}) - \Psi(x^{k+1})) \\
	&\overset{\eqref{eq:L-ineqatxkwithh}}{\geq}&(1-\theta) \frac{\Psi(x^{k}) - \Psi(x^{k+1})}{M\|\nabla H(x^{k}) - \nabla H(x^{k-1})\|} \\
	&\overset{\eqref{eq:lowbdofdifff}}{\geq}& \frac{\eta(1-\theta)}{2M} \frac{\|x^{k} - x^{k+1}\|^2}{\|\nabla H(x^{k}) - \nabla H(x^{k-1})\|}\\
	&\overset{\eqref{eq:L-smoothofh}}{\geq}& \frac{\eta(1-\theta)}{2ML} \frac{\|x^{k} - x^{k+1}\|^2}{\|x^{k} - x^{k-1}\|}\\
	&\overset{\eqref{eq:youngineq}}{\geq}& \frac{\eta(1-\theta)}{2ML} \left( \|x^{k} - x^{k+1}\| - \frac{1}{4}\|x^{k} - x^{k-1}\|\right),
\end{eqnarray*}
that is 
\begin{equation}\label{eq:ineq01}
	\boxed{\forall k\geq N+1, \|x^{k} - x^{k+1}\| \leq  \frac{1}{4}\|x^{k} - x^{k-1}\| + \frac{2ML}{\eta(1-\theta)} (\Psi(x^{k}))^{1-\theta} - (\Psi(x^{k+1}))^{1-\theta}).}
\end{equation}
Summing for $k$ from $N+1$ to $\infty$, we get
$$\frac{3}{4}\sum_{k=N+1}^{\infty} \|x^{k} - x^{k+1}\| \leq \frac{1}{4}\|x^{N+1} - x^{N}\| +  \frac{2ML}{\eta(1-\theta)}  (\Psi(x^{N+1}))^{1-\theta} < \infty,$$
that is 
$$\boxed{\sum_{k=0}^{\infty} \|x^{k} - x^{k+1}\| < \infty.}$$
(v)  The convergence of $\sum_{k=0}^{\infty} \|x^k-x^{k+1}\|$ indicates that $\{x^k\}$ is a Cauchy sequence which is convergent. Let $x^*$ denote the limit of $\{x^k\}$. Problem \eqref{prob:DC_univ} is a convex constrained DC program with continuously differentiable $H$, it follows from Theorem \ref{thm:convtod-stationaryforsmoothdcp} that $x^*$ is a strongly critical point and  
\begin{equation}\label{prob:subprob}
x^* = \argmin_{x\in \Omega} \{G(x) - \langle \nabla H(x^*), x\rangle\}.
\end{equation}
Then $x^*$ verifies the KKT conditions for problem \eqref{prob:subprob} as
$$\begin{cases}
\nabla G(x^*) - \nabla H(x^*) - \lambda - \mu e = \nabla f(x^*) - \lambda - \mu e = 0,\\
x^*\in \Omega,\\
\lambda\tran x^* = 0,\\
\lambda\geq 0, \mu\in \R,
\end{cases}$$
where $(\lambda,\mu)$ is the Lagrangian multiplier. The KKT conditions for problem \eqref{prob:subprob} are exactly the KKT conditions for the DC problem \eqref{P_mvsk}, i.e., $x^*$ is a KKT point of \eqref{P_mvsk}.

\section{\textbf{Proof of Theorem \ref{thm:decdirect}}} \label{appendix:E_thm:decdirect}
Under the assumption of constraint qualifications, the KKT conditions for the convex optimization problem \begin{equation}\label{prob:cvxsubprob}
y^k\in \min\{ g(x) - \langle x, \nabla h(x^k) \rangle : u(x)\leq 0, v(x)=0 \}
\end{equation}
reads
\begin{equation}\label{eq:kkt}
\left\{
\begin{array}{l}
\nabla g(y^{k}) - \nabla h(x^{k}) + \sum_{i=1}^{p}\lambda_i \nabla u_i(y^{k}) +  \sum_{i=1}^{q}\mu_j \nabla v_j(y^{k}) = 0,\\
u(y^{k})\leq 0, v(y^{k})=0,\\
\lambda_i u_i(y^{k}) = 0, i=1,\ldots,p,\\
\lambda \geq 0, \mu\in \R^{q},
\end{array}\right.
\end{equation}
where $(\lambda,\mu)\in \R_+^p\times \R^q$ is the Lagrangian multiplier. It follows from \eqref{eq:kkt} and $d^k:=y^{k}-x^{k}$ that
\begin{equation}
\label{eq:ineq}
\begin{aligned}
\langle \nabla f(y^{k}), d^k \rangle &=  \langle \nabla g(y^{k}) - \nabla h(y^{k}), d^k\rangle\\	
&=  \langle \nabla h(x^{k}) - \sum_{i=1}^{p}\lambda_i \nabla u_i(y^{k}) -  \sum_{i=1}^{q}\mu_j \nabla v_j(y^{k}) - \nabla h(y^{k}) ,d^k \rangle \\
&=  \underbrace{\langle \nabla h(x^{k}) - \nabla h(y^{k}) ,d^k  \rangle}_{(I)} - \underbrace{\langle \sum_{i=1}^{p}\lambda_i \nabla u_i(y^{k}) + \sum_{i=1}^{q}\mu_j \nabla v_j(y^{k}) ,d^k \rangle}_{(II)}.
\end{aligned}
\end{equation}
The sign of the part $(I)$ is determined by the monotonicity of $\nabla H$ since $h$ is convex, i.e., 
\begin{equation}
\label{eq:(I)}
(I) =  - \langle \nabla h(x^k) - \nabla h(y^{k}), d^k \rangle \leq 0.
\end{equation}
The sign of the part $(II)$ is determined by the convexity of $u$ and the affinity of $v$, i.e.,
\begin{equation}\label{eq:convexu&affinev}
\left\{
\begin{array}{ll}
u_i(y^{k}) - u_i(x^{k})&\leq \langle \nabla u_i(y^{k}), d^k \rangle, i=1,\ldots,p,\\
v_j(y^{k}) - v_j(x^{k})&= \langle \nabla v_j(y^{k}), d^k \rangle, j=1,\ldots,q.
\end{array}\right.
\end{equation}
Then it follows from \eqref{eq:kkt}, \eqref{eq:convexu&affinev}, $u(x^k)\leq 0$ and $v(x^k)=0$ that
\begin{equation}
\label{eq:(II)}
\begin{aligned}
(II) &= \sum_{i=1}^{p}\lambda_i \langle \nabla u_i(y^{k}), d^k \rangle + \sum_{i=1}^{q}\mu_j \langle \nabla v_j(y^{k}), d^k \rangle \\
&\geq \sum_{i=1}^{p}\lambda_i (u_i(y^{k}) - u_i(x^{k})) + \sum_{j=1}^{q}\mu_j (v_j(y^{k}) - v_j(x^{k}))\\
&= \sum_{i=1}^{p}\underbrace{\lambda_i u_i(y^{k})}_{=0} - \sum_{i=1}^{p}\underbrace{\lambda_i u_i(x^{k})}_{\leq 0} + \sum_{j=1}^{q}\underbrace{\mu_j v_j(y^{k})}_{=0} - \sum_{j=1}^{q}\underbrace{\mu_j v_j(x^{k})}_{=0} \geq 0.
\end{aligned}
\end{equation}
Combining \eqref{eq:ineq}, \eqref{eq:(I)} and \eqref{eq:(II)}, we get the required inequality
$$\boxed{\langle \nabla f(y^{k}),d^k \rangle  \leq 0.}$$

\section{\textbf{Proof of Theorem \ref{thm:decdirectbis}}} \label{appendix:F_thm:decdirectbis}
The $\rho$-strong convexity ($\rho>0$) of $h$ implies the strongly monotone of $\nabla h$ as   
\begin{equation}
\label{eq:strongcvx}	
\langle \nabla h(x^k) - \nabla h(y^{k}), x^k - y^{k} \rangle \geq \rho \|x^{k}-y^k \|^2.
\end{equation}
By analogue in Theorem \ref{thm:decdirect}, it follows from \eqref{eq:ineq}, \eqref{eq:(II)} and \eqref{eq:strongcvx} that
$$\langle \nabla f(y^{k}),d^{k} \rangle =  (I) - (II)\leq - \rho \|d^k\|^2,$$
which yields the required inequality $$\boxed{\langle \nabla f(y^{k}),d^k \rangle \leq - \rho \|d^k\|^2.}$$

\section{\textbf{Proof of Proposition \ref{prop:SCforDCdescentdirection}}}\label{appendix:prop:SCforDCdescentdirection} (i) Theorem \ref{thm:decdirect} indicates that $\langle \nabla f(y^k),d^k \rangle\leq 0$. If the inequality is strict, then $d^k\neq 0$ and by the Taylor expand of $f$ at $y^k+td^k$ with $t>0$, we have 
$$f(y^k+td^k) = f(y^k) + t \langle \nabla f(y^k), d^k \rangle + o(\|td^k\|).$$
Then
$$\frac{f(y^k+td^k) - f(y^k)}{t} = \langle \nabla f(y^k), d^k \rangle + \frac{\|d^k\|o(t)}{t}.$$
Taking $t\to 0^+$, we get 
$$\lim_{t\to 0^+} \frac{f(y^k+td^k) - f(y^k)}{t} = \langle \nabla f(y^k), d^k \rangle < 0,$$
implying that $\exists \eta_0>0$, $\forall t\in (0,\eta_0)$, 
$$f(y^k+td^k) < f(y^k).$$
Moreover, $d^k$ is a feasible direction implies that $\exists \eta_1>0, \forall t\in (0,\eta_1)$, 
$$ y^k+td^k \in \C.$$
Thus, taking $\eta = \min\{\eta_0,\eta_1\}$, we have $\forall t\in (0,\eta)$, 
$$f(y^k+td^k) < f(y^k) \text{ and } y^k+td^k \in \C.$$
That is, $d^k$ is a DC descent direction of $f$ at $y^k$ over $\C$. \\
(ii) Theorem \ref{thm:decdirectbis} with $\rho>0$ and $d^k\neq 0$ implies that $$\langle \nabla f(y^k),d^k \rangle\leq -\rho \|d^k\|^2<0.$$
Then we have exactly the case (i).

\section{\textbf{Proof of Proposition \ref{prop:feasdirect}}} \label{appendix:G_prop:feasdirect}
For any feasible direction $d^k$ and for any index $i\in A(y^{k})$, we have the inequality $\langle \nabla u_i (y^{k}), d^k\rangle \leq 0$. Replacing $d^k$ by $y^{k}-x^k$ and using the convexity of $u_i$, we get 
$$u_i(x^k) - u_i(y^{k})\geq  - \langle \nabla u_i (y^{k}), d^k\rangle \geq 0.$$
We obtain from the above inequality and $u_i(y^{k}) =0, \forall i\in A(y^{k})$ that 
\begin{equation}\label{eq:ineq1}
u_i(x^k)  \geq 0.
\end{equation}
Moreover, since $x^k$ is a feasible point, then $u_i(x^k)\leq 0$, combining with \eqref{eq:ineq1}, we obtain $u_i(x^k) = 0,$ which implies $i\in A(x^k)$. Thus $$\boxed{A(y^{k})\subset A(x^k).}$$ 

\section{\textbf{Proof of Theorem \ref{thm:necsuffcondfeasdirect}}}\label{appendix:H_thm:necsuffcondfeasdirect}
The necessary part is proved in Theorem \ref{prop:feasdirect}. For the sufficient part: the affinity of $v$ implies that  
\begin{equation}\label{eq:thm9-01}
\langle \nabla v(y^{k}), d^k \rangle = v(y^{k}) - v(x^k);
\end{equation}
Two points $x^k$ and $y^{k}$ in $\mathcal{C}$ imply that 
\begin{equation}\label{eq:thm9-03}
v(x^k)=v(y^{k}) = 0; ~ u(x^k)\leq 0; ~ u(y^{k})\leq 0.
\end{equation} 	
$\rhd$ By the affinity of $v$, for any $\lambda>0$, we have $$v(y^{k} + \lambda d^k) = v(y^{k}) + \lambda \langle \nabla v(y^{k}), d^k \rangle \overset{\eqref{eq:thm9-01}}{=} v(y^{k}) + \lambda (v(y^{k}) - v(x^{k})) \overset{\eqref{eq:thm9-03}}{=}  0.$$
Thus,
\begin{equation}
\label{eq:thm9-result1}
\boxed{\forall \lambda>0, v(y^{k} + \lambda d^k) = 0.}
\end{equation}
$\rhd$ By the affinity of $u$, for any $\lambda>0$, we have
\begin{equation}
\label{eq:thm9-04}
u(y^{k}+\lambda d^k) = u(y^{k}) + \lambda (u(y^{k}) - u(x^{k})).
\end{equation}
\underline{Case 1}: If $i\in A(y^{k})\subset A(x^{k})$, then $u_i(x^k)=u_i(y^{k})=0$, and we get from \eqref{eq:thm9-04} that 
$$\forall \lambda>0, u_i(y^{k}+\lambda d^k)  = 0.$$
\underline{Case 2}: If $i\notin A(y^{k})$ and $i\in A(x^k)$, then $u_i(y^{k})<0$ and $u_i(x^k)=0$, we get from \eqref{eq:thm9-04} that 
$$\forall \lambda>0, u_i(y^{k}+\lambda d^k)  = (1+\lambda) u_i(y^{k}) < 0.$$
\underline{Case 3}: If $i\notin A(y^{k})$ and $i\notin A(x^k)$, then $u_i(y^{k})<0$ and $u_i(x^k)<0$, we get from \eqref{eq:thm9-04} that for any sign of $u_i(y^{k})-u_i(x^k)$,  
$$\exists \bar{\lambda}>0, \forall \lambda \in (0,\bar{\lambda}),~ u_i(y^{k}+\lambda d^k) = u_i(y^{k}) + \lambda (u_i(y^{k}) - u_i(x^{k})) < 0.$$
We conclude from Cases 1-3 that 
\begin{equation}
\label{eq:thm9-result2}
\boxed{\exists \bar{\lambda}>0, \forall \lambda \in (0,\bar{\lambda}), ~ u(y^{k}+\lambda d^k) \leq 0.}
\end{equation}
It follows from \eqref{eq:thm9-result1} and \eqref{eq:thm9-result2} that $d$ is a feasible direction of $\mathcal{C}$ at $y^{k}$.

\section{\textbf{Proof of Theorem \ref{thm:decdirect-nondiff}}}\label{appendix:I_thm:decdirect-nondiff}
Let $g$ be differentiable convex, $h$ be non-differentiable convex, then
\begin{align}\label{eq:thm5-1}
f'(y^{k};d^k) =\langle \nabla g(y^{k}), d^k \rangle - h'(y^{k};d^k).
\end{align}
Since $h$ is non-differentiable and convex, then $$\partial h(y^{k}) = \{ z: h(y^{k} + s) \geq  h(y^{k}) + \langle z,s \rangle, \forall s \}.$$
Taking $s = td^k$ with $t> 0$, we have $\forall z^k\in \partial h(y^{k})$,
\begin{equation}
\label{eq:thm5-2}
\langle z^k , d^k \rangle \leq \frac{h(y^{k} + td^k) - h(y^{k})}{t}  \xrightarrow{t\to 0^+} h'(y^{k};d^k). 
\end{equation}
The equations \eqref{eq:thm5-1} and \eqref{eq:thm5-2} implies 
\begin{equation}\label{eq:thm5-3}
f'(y^{k};d^k) \leq \langle \nabla g(y^{k}) - z^k, d^k \rangle, \forall z^k\in \partial h(y^{k}).
\end{equation}
Now, DCA for problem \eqref{prob:CCDC} yields the next convex optimization subproblem
$$y^{k} \in \argmin \{ g(x)- \langle \xi^k ,x \rangle : x\in \C \},$$
where $\xi^k\in \partial h (x^k)$. The first-order optimality implies
$$0 \in \nabla g(y^{k}) - \xi^k +  N_{\mathcal{C}} (y^{k}).$$
Then,
$$ \xi^k - \nabla g(y^{k}) \in N_{\mathcal{C}} (y^{k}).$$
By the definition of the normal cone $N_{\mathcal{C}} (y^{k})$, we have 
$$\langle \xi^k - \nabla g(y^{k}) , y^{k} - x \rangle \geq 0, \forall x\in \C.$$
This inequality is obviously true for $x=x^k\in \C$, so that 
\begin{equation}
\label{eq:thm5-4}
\langle \xi^k - \nabla g(y^{k}) , y^{k} - x^k \rangle \geq 0.
\end{equation}
Combining equations \eqref{eq:thm5-3}, \eqref{eq:thm5-4} and $d^k = y^{k} - x^k$, we get

\begin{align}
\label{eq:thm5-5}
f'(y^{k};d^k) \overset{\eqref{eq:thm5-3}}{\leq} \langle \nabla g(y^{k}) - z^k, d^k \rangle  =\langle \nabla g(y^{k}) - \xi^k, d^k \rangle + \langle \xi^k - z^k, d^k \rangle \overset{\eqref{eq:thm5-4}}{\leq} \langle \xi^k - z^k, d^k \rangle.
\end{align}
That is,
$$\boxed{f'(y^{k};d^k)\leq \langle \xi^k - z^k, d^k \rangle, \xi^k\in \partial h(x^k), z^k\in \partial h(y^k).}$$
Now, we can derive the results for non-differentiable $h$ as follows:\\
\noindent $\rhd$ \underline{If $h$ is non-differentiable convex:} by the monotonicity of $\partial h$, we have for $\xi^k\in \partial h(x^k)$ and $z^k\in \partial h(y^k)$ that
\begin{equation}
\label{eq:thm5-6}
\langle \xi^k - z^k , d^k \rangle  = \langle \xi^k - z^k , y^{k} - x^k \rangle \leq 0.
\end{equation}
It follows from \eqref{eq:thm5-5} and \eqref{eq:thm5-6} that 
$$\boxed{f'(y^{k};d^k) \leq 0.}$$
\noindent $\rhd$ \underline{If $h$ is non-differentiable $\rho$-strongly convex:} by the strong monotonicity of $\partial h$, we have for $\xi^k\in \partial h(x^k)$ and $z^k\in \partial h(y^k)$ that 
\begin{equation}
\label{eq:thm5-7}
\langle \xi^k - z^k , d^k \rangle  = \langle \xi^k - z^k , y^{k} - x^k \rangle \leq -\rho \|d^k\|^2.
\end{equation}
It follows from \eqref{eq:thm5-5} and \eqref{eq:thm5-7} that 
$$\boxed{f'(y^{k};d^k) \leq -\rho \|d^k\|^2.}$$
\noindent $\rhd$ \underline{If $u_i$ is non-differentiable convex:} we can prove in a similar way as in Theorem \ref{prop:feasdirect} that $d^k$ is a feasible direction of $\C$ at $y^{k}$ with the replacement of $\langle \nabla u_i (y^{k}), d^k\rangle $ by $u_i'(y^{k};d^k)$. 

\section{\textbf{Proof of Theorem \ref{thm:convofBDCA}}}\label{appendix:J_thm:convofBDCA}
(i) For every $k=0,1,\ldots$, the first order optimality condition for the convex problem
$$y^{k}\in \argmin\{g(x) - \langle z^k , x \rangle ~|~ x\in \C\},$$
with $z^k\in \partial h(x^k)$ reads
$$	0\in \partial g(y^{k}) + N_{\C}(y^{k}) - z^k.$$
Thus 
\begin{equation}
	\label{eq:1stoptcondPk}
z^k \in\partial h(x^k) \cap (\partial g(y^{k}) + N_{\C}(y^{k})).
\end{equation}
By the $\rho_h$-convexity of $h$ and $z^k\in \partial h(x^k)$, then
\begin{equation}
	\label{eq:strongconvexityofh}
	h(y^{k})\geq h(x^{k}) + \langle y^{k}-x^{k}, z^k \rangle + \frac{\rho_h}{2}\|y^{k}-x^{k}\|^2.
\end{equation}
By the $\rho_g$-convexity of $g$ and $w^k\in \partial g(y^{k})$, we have 
\begin{equation}
	\label{eq:strongconvexityofg}
	g(x^{k})\geq g(y^{k}) + \langle x^{k}-y^{k}, w^k \rangle + \frac{\rho_g}{2}\|y^{k}-x^{k}\|^2. 
\end{equation}
Taking $v^k\in N_{\C}(y^{k})$ and setting $w^k=z^k-v^k$, then we get from \eqref{eq:strongconvexityofg} that 
\begin{equation}\label{eq:strongconvexityofg-bis}
	g(x^{k}) + \langle y^{k}-x^{k}, z^k-v^k \rangle - \frac{\rho_g}{2}\|y^{k}-x^{k}\|^2 \geq g(y^{k}).
\end{equation}
By the definition of the normal cone,
$$v^k\in N_{\C}(y^{k}) \implies \langle x-y^{k}, v^k \rangle \leq 0, \forall x\in \C.$$
Then  
\begin{equation}
	\label{eq:normconeineq}
	\langle x^k-y^{k}, v^k \rangle \leq 0, \forall k=1,2,\ldots.
\end{equation}
It follows that for all $k=1,2,\ldots$
\begin{eqnarray*}
	f(y^{k})&=&g(y^{k})-h(y^{k})\\
	&\overset{\eqref{eq:strongconvexityofh}}{\leq}&g(y^{k})-\left( h(x^k) + \langle y^{k} - x^k,z^k \rangle + \frac{\rho_h}{2}\|y^{k}-x^{k}\|^2 \right)\\
	&\overset{\eqref{eq:strongconvexityofg-bis}}{\leq}& g(x^k) - h(x^k) + \langle x^{k}-y^{k}, v^k \rangle - \frac{\rho_g+\rho_h}{2}\|y^{k}-x^{k}\|^2\\
	&\overset{\eqref{eq:normconeineq}}{\leq}& f(x^k) - \frac{\rho_g+\rho_h}{2}\|y^{k}-x^{k}\|^2,
\end{eqnarray*}
which leads to the required inequality
\begin{equation}\label{eq:descentlemma}
	\boxed{f(x^k) - f(y^k) \geq \frac{\rho_g+\rho_h}{2}\|y^{k}-x^{k}\|^2, \forall k=1,2,\ldots.}
\end{equation}
(ii) The point $x^{k+1}$ is updated either by $y^k$ or by the Armijo's rule. In the later case, we have  
\begin{equation}
	\label{eq:Armijo'sineq}
	f(y^k) - f(x^{k+1}) \geq -\sigma \alpha f'(y^k;d^k) \geq 0.
\end{equation}
It follows from \eqref{eq:descentlemma}, \eqref{eq:Armijo'sineq} and the lower boundedness of $f$ over $\C$ that for every $k=1,2,\ldots$,  
$$-\infty < \min \{f(x): x\in \C\} \leq f(x^{k+1})\leq f(y^{k})\leq f(x^{k}).$$
Therefore, the sequence $\{f(x^k)\}_{k\geq 1}$ is non-increasing and bounded from below, thus convergent. \\
(iii) By the convergence of $\{f(x^k)\}_{k\geq 1}$ and the relation $f(x^{k+1})\leq f(y^k)\leq f(x^{k})$ for every $k=1,2,\ldots,$ we obtain that the sequence $\{f(y^k)\}_{k\geq 1}$ converges to the same limit of the sequence $\{f(x^k)\}_{k\geq 1}$. Then, taking the limit of \eqref{eq:descentlemma} for $k\to \infty$, we obtain that 
\begin{equation}
	\label{eq:convofyk-xk}
	\boxed{\|y^k-x^k\|\xrightarrow{k\to \infty}0.}
\end{equation}
The iteration point $x^{k+1}$ is computed by the line search formulation 
\begin{equation}
	\label{eq:Armijoupdate}
	x^{k+1} = y^k + \alpha_k (y^k-x^k), \forall k=1,2,\ldots,
\end{equation}
where $\alpha_k\in(0,\bar{\alpha}]$ ($\bar{\alpha}$ is an upper bound for $\alpha_k$, e.g., $\bar{\alpha}=1$) if Armijo's rule is applied and $\alpha_k=0$ otherwise. Then we get from \eqref{eq:Armijoupdate} and \eqref{eq:convofyk-xk} that 
$$\|x^{k+1}-x^k\| \overset{\eqref{eq:Armijoupdate}}{=} \|y^k + \alpha_k (y^k-x^k) - x^k\| = (1+\alpha_k)\| y^k-x^k\|\leq (1+\bar{\alpha})\| y^k-x^k\|\xrightarrow[\eqref{eq:convofyk-xk}]{k\to \infty} 0.$$
That is $$\boxed{\|x^{k+1}-x^k\| \xrightarrow{k\to \infty}0.}$$ 
(iv) The equation \eqref{eq:descentlemma} and $f(y^k)\geq f(x^{k+1})$ for every $k=1,2,\ldots$ imply that 
\begin{equation}
	\label{eq:descentlemma-bis}
	\|y^{k}-x^{k}\|^2 \leq \frac{2}{\rho_g+\rho_h}(f(x^k) - f(x^{k+1})), \forall k=1,2,\ldots.
\end{equation}
Summing \eqref{eq:descentlemma-bis} for $k$ from $1$ to $\infty$, we get 
$$\sum_{k=1}^{\infty}\|y^{k}-x^{k}\|^2 \leq \frac{2}{\rho_g+\rho_h} (f(x^1) - \lim_{k\to\infty} f(x^{k+1})) < \infty.$$
That is
\begin{equation}
	\label{eq:summableyk-xksquare}
	\boxed{\sum_{k=0}^{\infty}\|y^{k}-x^{k}\|^2<\infty.}
\end{equation}
Similarly, replacing $x^{k+1}$ by \eqref{eq:Armijoupdate}, we get 
$$\sum_{k=1}^{\infty}\|x^{k+1}-x^{k}\|^2 \overset{\eqref{eq:Armijoupdate}}{=} \sum_{k=1}^{\infty}\|(1+\alpha_k)(y^k-x^{k})\|^2\leq (1+\bar{\alpha})^2\sum_{k=1}^{\infty}\|y^k-x^{k}\|^2\overset{\eqref{eq:summableyk-xksquare}}{<}\infty.$$
Hence,
$$\boxed{\sum_{k=0}^{\infty}\|x^{k+1}-x^{k}\|^2 < \infty.}$$
(v) For every $k=1,2,\ldots$, the first order optimality condition given in \eqref{eq:1stoptcondPk} reads:
$$z^k \in (\partial g(y^k) + \partial \chi_{\mathcal{C}}(y^k)) \cap \partial h(x^k).$$
Then, by the boundedness of the sequence $\{x^k\}$, there exist a convergent subsequence denoted by $\{x^{k_j}\}_{j\in \N}\subset \C$ and its limit denoted by $x^*\in \C$. We get from $\|y^k-x^k\|\xrightarrow{k\to \infty} 0$ that the subsequence $\{y^{k_j}\}$ converges to $x^*$ as well. Then it follows by the closedness of the graphs of $\partial g$, $\partial h$ and $\partial \chi_{\mathcal{C}}$ that any limit point of the subsequence $\{z^{k_j}\}$ is included in $(\partial g(x^*) + \partial \chi_{\mathcal{C}}(x^*)) \cap \partial h(x^*)$. The boundedness of the sequence $\{z^k\}$ implies that the set of limit points of the subsequence $\{z^{k_j}\}$ is non-empty. Hence
$$\boxed{(\partial g(x^*) + \partial \chi_{\mathcal{C}}(x^*)) \cap \partial h(x^*)\neq \emptyset.}$$
That is, $x^*$ is a DC critical point of \eqref{prob:CCDC}. It follows from Theorem \ref{thm:convtod-stationaryforsmoothdcp} that if $h$ is continuously differentiable, then $x^*$ is a strongly DC critical point of \eqref{prob:CCDC}.\\
(vi) The convergence of $\sum_{k\geq 0}\|y^k-x^k\|$ for the Boosted-DCA is proved in a similar way as the convergence of $\sum_{k\geq 0}\|x^{k+1}-x^k\|$ in Theorem \ref{thm:convUDCA} using the {\L}ojasiewicz subgradient inequality and the assumption that $h$ has locally Lipschicz continuous gradient over $\C$ (i.e., $h$ is continuously differentiable over $\C$ as well). \\
Now, based on the convergence of $\sum_{k\geq 0}\|y^k-x^k\|$, we can also establish the convergence of $\sum_{k\geq 0}\|x^{k+1}-x^k\|$ for the Boosted-DCA as:
$$\sum_{k\geq 1}\|x^{k+1}-x^k\| = \sum_{k\geq 1}\|(1+\alpha_k)(y^k-x^k)\|\leq (1+\bar{\alpha})\sum_{k\geq 1}\|y^k-x^k\|< \infty.$$
Therefore, $\{x^k\}$ is a Cauchy sequence, and thus convergent.\\
(vii) The convergence of the sequence $\{x^k\}$ to a strongly DC critical point of \eqref{prob:CCDC} is an immediate consequence of (v) and (vi) in the case where $h$ is continuously differentiable over $\C$.
\end{APPENDICES}


\bibliographystyle{informs2014} 
\bibliography{bibfile} 


\end{document}